

\documentclass[EJP,preprint]{ejpecp} 






\SHORTTITLE{Regularization by noise for $1d$ mean field games}

\TITLE{Intrinsic regularization by noise for $1d$ mean field games} 



\AUTHORS{%
  François Delarue\footnote{Université Côte d'Azur, CNRS, Nice, France
    \EMAIL{francois.delarue@univ-cotedazur.fr}}
  \and 
  Youssef Ouknine 
  \footnote{Cadi Ayyad University \& Mohammed VI Polytechnic University, Morocco. \BEMAIL{ouknine@uca.ac.ma}}}



\KEYWORDS{Mean field games; Common Noise; Measure-valued Diffusions; Wasserstein Diffusions; Reflected SPDE;  Rearrangement Inequalities} 

\AMSSUBJ{49N80, 91A16, 60H15, 60G57} 

\SUBMITTED{January 6, 2024} 




\VOLUME{0}
\YEAR{2020}
\PAPERNUM{0}
\DOI{10.1214/YY-TN}


\ABSTRACT{The purpose of this article is to show that an intrinsic noise with values in the space ${\mathcal P}({\mathbb R})$ of $1d$ probability measures 
may force uniqueness to first order mean field games. 
The structure of the noise is inspired from the earlier work \cite{delarueHammersley2022rshe}. It reads as a coloured Ornstein-Uhlenbeck process 
with reflection on the boundary of quantile functions on the $1d$ torus, with the elements of the latter playing the role of indices for the continuum of players
underpinning the game. In \cite{delarueHammersley2022rshe}, the semi-group generated by the noise is shown to enjoy smoothing properties that 
become key in the study carried out here. Although the analysis 
is limited to the 1d setting, this is the first example of uniqueness forcing for generic mean field games set over an infinite dimensional set of probability measures and this 
may be one step forward towards  
a more systematic regularization by noise theory for mean field games.     }




\usepackage[T1]{fontenc}
\usepackage[utf8]{inputenc}
\usepackage{lmodern}
\usepackage[english]{babel}
\usepackage{graphicx,mathrsfs}
\usepackage{cases}
\usepackage{upgreek}
\usepackage{amsmath}
\usepackage{amsfonts}
\usepackage{amssymb} 
\usepackage{amsthm}
\usepackage{bbm}
\usepackage{bm} 
\usepackage{lipsum}
\usepackage{ulem}
\usepackage{subcaption}
\usepackage{xcolor}
\usepackage{url}
\usepackage{hyperref}
\usepackage{csquotes}
\usepackage{stmaryrd}
\usepackage{pgf,tikz}
\usepackage{pgfplots,pgfplotstable}
\usetikzlibrary{calc,trees,positioning,arrows,chains,shapes.geometric,%
    decorations.pathreplacing,decorations.pathmorphing,shapes,%
    matrix,shapes.symbols}
\usepackage{multirow}
\usepackage{tikz}
\usepackage{wrapfig}
\usepackage{graphicx}
\graphicspath{ {./images/} }
\usepackage{subcaption}
\usepackage{blindtext}
\usepackage[ruled,vlined]{algorithm2e}
\usepackage{xcolor}

\tikzset{
>=stealth',
  punktchain/.style={
    rectangle, 
    rounded corners, 
    draw=black, very thick,
    text width=10em, 
    minimum height=3em, 
    text centered, 
    on chain},
  line/.style={draw, thick, <-},
  element/.style={
    tape,
    top color=white,
    bottom color=blue!50!black!60!,
    minimum width=8em,
    draw=blue!40!black!90, very thick,
    text width=10em, 
    minimum height=3.5em, 
    text centered, 
    on chain},
  every join/.style={->, thick,shorten >=1pt},
  decoration={brace},
  tuborg/.style={decorate},
  tubnode/.style={midway, right=2pt},
}

\pgfplotsset{contents/.style={axis x line=none, axis y line=none, scale = 2,area style,ymin=-2,ymax=22,enlargelimits=true}}

\allowdisplaybreaks
\numberwithin{equation}{section}

\theoremstyle{plain}
\newtheorem{thm}{Theorem}[section]
\newtheorem{lem}[thm]{Lemma}
\newtheorem{prop}[thm]{Proposition}

\newtheorem{defn}[thm]{Definition}
\newtheorem{rem}[thm]{Remark}

\def \be {\begin{equation}}
\def \ee {\end{equation}}

\def\ud{\mathrm{d}}

\def\cC{{\mathcal C}}

\renewcommand{\phi}{\varphi}
\renewcommand{\epsilon}{\varepsilon}
\renewcommand{\tilde}{\widetilde}
\renewcommand{\hat}{\widehat}

\providecommand{\customgenericname}{}

\begin{document}

%

\section{Introduction} 

Mean field game theory was introduced in concomitant works by Lasry and Lions and by Huang, Caines and Malhamé, see \cite{HuangCainesMalhame1,huangMalhameCaines2007,lasryLions2006jeuxAchampMoyen,lasryLions2006jeuxAchampMoyen2,lasryLions2007}. 
Its purpose is to describe asymptotic versions of dynamic games with many weakly interacting agents.  In this approach, 
equilibria (in the sense of Nash) are described by means of a time-dependent path with values in the space of probability measures. 
Those probability measures describe
the successive states (as time goes by) 
of the population 
of players when all of them form a Nash equilibrium. 

Solutions (or equilibria) to mean field games 
are known to be characterized through a forward-backward system of two Partial Differential Equations (PDEs): the forward equation 
is a Fokker-Planck equation describing the evolution of the in-equilibrium population whilst the backward equation 
is a Hamilton-Jacobi equation for the value function to one reference player within the population, see \cite{cardaliaguet2019master,cardaliaguetporretta-cetraro,Lionscollege2}. Alternatively, 
equilibria can be described through a forward-backward system of two (possibly stochastic) differential equations, 
the forward one being for the state of one typical player inside the in-equilibrium population and the backward one being 
either for the optimal cost or the optimal strategy to the same reference player, see \cite{CarmonaDelarueI}. 

One main difficulty  with mean field games is that uniqueness is rare. Conditions are known under which uniqueness is indeed satisfied, but 
there are rather demanding. 
The most famous ones are the so-called monotonicity conditions, either stated in the sense of Lasry and Lions (or equivalently in the space of signed measures) 
or in the displacement monotone sense (or equivalently in the space of random variables lying above the space of probability measures), see   the additional references 
\cite{MR4509653,MR4499277} for the latter. 

The key objective of this article is to show that one can guarantee uniqueness by forcing the dynamics of the population randomly. 
This fact was already observed in previous contributions, but in some limited cases when the equilibria are known to live in 
a finite-dimensional space (for instance, when equilibria are Gaussian, see \cite{fog2018}, or when the mean field game is set over a finite state space, see \cite{mfggenetic}). 
When the framework is truly infinite dimensional, uniqueness has been shown to hold in \cite{delarue2019restoring}, but in presence of an infinite dimensional noise that destroys the mean field structure of the solutions. 
In contrast to the latter one, 
the main result here is to show that we can indeed force uniqueness and (at the same time) preserve the mean field structure of the solutions with a suitable form of common noise. 

The form of our common noise is taken from the previous work \cite{delarueHammersley2022rshe}. 
Therein, the authors study the construction of a diffusion process with values in ${\mathcal P}({\mathbb R})$ (the space of probability measures over ${\mathbb R}$).
The key point in this approach is to identify elements of 
${\mathcal P}({\mathbb R})$
with quantile functions defined on the $1d$ torus and then to 
consider, as dynamics, a coloured stochastic 
heat equation with reflection on the boundary of
the set of   
quantile functions.
 Importantly, 
the semi-group of the resulting process (acting on real-valued functions defined on ${\mathcal P}({\mathbb R})$) is shown to have a strong smoothing effect, mapping bounded onto Lipschitz functions, with a time integrable singularity in small time. The core of our approach is to 
subject the players 
of the game to this form of noise and then to use the smoothing 
properties established for the latter one in \cite{delarueHammersley2022rshe} in order to decouple the two forward and backward equations 
characterizing the equilibrium. In particular, 
the limitation of the analysis to the $1d$ setting directly stems from the 
approach introduced in \cite{delarueHammersley2022rshe}. If we could extend 
\cite{delarueHammersley2022rshe} to the higher dimensional setting, 
we could consider in turn games with $d$-dimensional players. 

Similar 
to solutions to standard mean field games, 
the equilibrium that is constructed is shown to be distributed, 
meaning that the equilibrium strategy of a reference player in the population 
is a feedback function of the private state of the reference player and of the statistical state of the
population. As such, the feedback function does not depend on the index of the reference player 
in the continuum of players. 
Based on this observation, 
we say that 
the regularization phenomenon enforcing uniqueness 
is \textit{intrinsic}. In fact, this feedback function 
is expected to be, at least formally, the solution of a second order analogue of the 
master equation 
for mean field games 
(see 
\cite{cardaliaguet2019master}). 
At this stage, the understanding of the Kolomogorov equation associated with 
the reflected stochastic heat equation constructed in 
\cite{delarueHammersley2022rshe} remains however too limited to 
make this guess rigorous. We hope to come back to this question in a future work.

Our article is organized as follows. 
The form of the game together with the main results
are presented in Section 
\ref{sec:1} (see in particular Theorem \ref{main statement}). 
The forward-backward system characterizing the equilibria is shown to be uniquely solvable 
in Section 
\ref{sec:FB}. 
In Section 
\ref{sec:intrinsic}, we prove that the equilibrium feedback has a distributed structure. 
Finally, 
we establish in Appendix some auxiliary results related  
to the noise itself.

\vspace{5pt} 

\noindent {\bf Notations}. We let ${\mathbb S} := {\mathbb R}/{\mathbb Z}$, which we equip 
 with the Lebesgue measure 
 ${\rm Leb}_{\mathbb S}$. We call $(e_0:=1,(e_k:=\sqrt{2} \cos( 2 \pi k \cdot))_{k \in {\mathbb N} \setminus \{0\})}$ the standard cosinus functions on 
 ${\mathbb S}$. For an exponent $\lambda \in (1/2,1)$  (which is fixed throughout the note), we let $Q$ be the operator mapping
 $e_k$ onto $k^{- \lambda} e_k$, for $
 k \in {\mathbb N}$. Also, we call  $(W_t)_{t \geq 0}$
 an $L^2({\mathbb S})$-valued $Q$-Brownian motion  defined on some usual filtered probability space $(\Omega,{\mathcal A},{\mathbb F},{\mathbb P})$, 
 with the following expansion:  
  \begin{equation*} 
 W_t(x) = \sum_{k \in {\mathbb N}} (Q e_k)(x) B_t^k, \quad x \in {\mathbb S}, \quad t \geq 0,
 \end{equation*} 
  for a collection $(B^k)_{k \in {\mathbb N}}$ of independent ${\mathbb F}$-Brownian motions. For purposes that are clarified below, we assume ${\mathbb F}$ to be generated by 
 ${\mathcal F}_0$ and the family 
  $(B^k)_{k \in {\mathbb N}}$. 
We say that a 1-periodic function 
from ${\mathbb R}$ to $[-\infty,+\infty]$ 
is 
non-decreasing if it is symmetric with respect $0$ and non-increasing on $[-1/2,0]$. 
Identifying 1-periodic functions with functions defined on ${\mathbb S}$, 
we call 
$U^2({\mathbb S})$ the class of those non-decreasing functions that are also square-integrable
(with respect to ${\rm Leb}_{\mathbb S}$).  
Importantly (see for instance\footnote{In fact, in 
 \cite{delarueHammersley2022rshe}, elements of $U^2({\mathbb S})$ are non-increasing on $[0,1/2]$ and non-decreasing on $[-1/2,0]$.
 We may easily pass from one choice to the other by means of the change of variable $x \mapsto x+1/2$.}
 \cite[Proposition 2.1]{delarueHammersley2022rshe}), any element 
of $U^2({\mathbb S})$ has a unique version in $L^2({\mathbb S})$ that is right-continuous in $0$ and left-continuous on 
$(0,1/2]$ (by symmetry such a version is in fact continuous in $0$). We say that this version is `canonical'. Also, the mapping 
\begin{equation} 
\label{eq:cadlag:version}
(x,h) \in {\mathbb S}
\times U^2({\mathbb S}) \mapsto 
\left\{
\begin{array}{ll}
\displaystyle \lim_{\delta \searrow 0} 
\frac1{\delta} \int_{x- \delta}^{x} h(r) \ud r \quad x \in (0,1/2] + {\mathbb Z}
\\
\displaystyle 
\lim_{\delta \searrow 0} 
\frac1{\delta} \int_x^{x+ \delta} h(r) \ud r \quad x \in [-1/2,0] + {\mathbb Z}
\end{array}
\right.
\end{equation}
that maps 
an element $h \in U^2({\mathbb S})$ onto the evaluation of the canonical version of $h$ at $x$ is jointly measurable in 
$(x,h)$.

Without any further 
comment, we will always consider this version. 
In the same spirit, we denote by $H^2_{\rm sym}({\mathbb S})$ the set of 
symmetric 1-periodic functions that belong to the standard Sobolev space $H^2({\mathbb S})$
(i.e., the second-order derivatives are in $L^2({\mathbb S})$). The dual of 
$H^2_{\rm sym}({\mathbb S})$ is denoted 
$H^{-2}_{\rm sym}({\mathbb S})$. 
 
Moreover, we call ${\mathcal P}_2({\mathbb R})$ the space of probability measures over ${\mathbb R}$
with a finite second moment. We equip it with the $2$-Wasserstein distance ${\mathbb W}_2$, see 
\cite[Chapter 5]{CarmonaDelarueI}. We recall in particular
from \cite{delarueHammersley2022rshe} that 
$({\mathcal P}_2({\mathbb R}),W_2)$ and $(U^2({\mathbb S}),\| \cdot \|_2)$ (where $\| \cdot \|_2$ is the $L^2({\mathbb S})$-norm) 
are isometric. In words, elements of $U^2({\mathbb S})$ should be regarded as `periodic' quantile functions and are 
canonically identified with probability measures on ${\mathbb R}$.
Indeed, for a probability measure $\mu \in {\mathcal P}_2({\mathbb R})$ 
with $F_\mu$ as cumulative distribution function
and with 
\begin{equation*}
F_\mu^{-1}(x) := \inf \bigl\{ t \in {\mathbb R} : F_\mu(t) \geq x \bigr\}, \quad x \in [0,1], 
\end{equation*} 
as generalized inverse of the cumulative distribution function, 
the periodic function given by 
\begin{equation} 
\label{eq:inverse:generalise}
x \mapsto 
\left\{ 
\begin{array}{ll}
F_\mu^{-1}(2x),&\quad x \in [0,1/2], 
\\
F_\mu^{-1}(-2x),&\quad x \in [-1/2,0],
\end{array}
\right. 
\end{equation} 
is the representative of $\mu$ in $U^2({\mathbb S})$. With a slight abuse of notation, we still denote this representative by $F_\mu^{-1}$. 

Lastly, we also recall that the Borel $\sigma$-field on ${\mathcal P}_2({\mathbb R})$ is generated 
by the mappings $A \mapsto \mu(A)$, for $A$ running in the Borel $\sigma$-field over ${\mathbb R}$.  
And, 
for any metric space 
${\mathcal X}$, we denote by ${\mathcal B}({\mathcal X})$ the 
Borel $\sigma$-field on ${\mathcal X}$.

\section{Mean field game with common noise and main result} 
\label{sec:1}

\subsection{Candidate for being an equilibrium: reflected SDE}
Following \cite{mfggenetic} (which addresses mean field games over a finite state space), the challenge is to define a suitable of class of probability-measure-valued dynamics on the top of which the mean field game will be eventually constructed. 
We thus introduce the following prototype of reflected SDEs, the driftless version of which is taken from \cite{delarueHammersley2022rshe}:
\begin{equation}
\label{eq:V}
\ud X_t(x) = - {\mathcal V}\bigl(t,x,\text{\rm Leb}_{\mathbb S} \circ X_t^{-1} \bigr) \ud t + \Delta X_t(x) \ud t + \ud W_t(x) + \ud \eta_t(x), \quad x \in {\mathbb S}, 
\end{equation} 
where ${\mathcal V}$ is a measurable mapping from $[0,T] \times {\mathbb S} \times {\mathcal P}_2({\mathbb R})$ to ${\mathbb R}$. We assume the latter to satisfy the following properties:

\begin{defn} 
\label{def:mathcalC}
We denote by ${\mathcal C}$ the class of measurable functions ${\mathcal V}:[0,T] \times {\mathbb S} \times {\mathcal P}_2({\mathbb R}) \rightarrow {\mathbb R}$ such that, for a certain constant $C \geq 0$, 
\begin{enumerate}
\item The function ${\mathcal V}$ is bounded by $C$;
\item  For any $(t,\mu) \in [0,T] \times {\mathcal P}_2({\mathbb R})$, the function $x \in {\mathbb S} \mapsto  {\mathcal V}(t,x,\mu)$ belongs to 
$U^2({\mathbb S})$; 
\item The function ${\mathcal V} : (t,\mu) \in [0,T] \times {\mathcal P}_2({\mathbb R}) \rightarrow L^2({\mathbb S})$ is $C$-Lipschitz, i.e. 
\begin{equation*} 
\int_{\mathbb S} 
\bigl\vert {\mathcal V}(t,x,\mu) - {\mathcal V}(t,x,\nu) \bigr\vert^2 \ud x \leq C {\mathbb W}_2^2 \bigl( \mu,\nu\bigr). 
\end{equation*} 
\end{enumerate}
\end{defn}

In Equation \eqref{eq:V}, the process $(\eta_t(x))_{0\le t \le T,x \in {\mathbb S}}$ is a reflection term that forces the solution to stay in 
the cone $U^2({\mathbb S})$ of square-integrable mappings from ${\mathbb S}$ to ${\mathbb R}$. The definition is as follows:

\begin{defn}
\label{def:existence}
On  $(\Omega,{\mathcal A},{\mathbb F},{\mathbb P})$ equipped with 
the $Q$-Brownian motion $(W_t)_{0 \le t \le T}$ 
and with a deterministic initial condition $X_0$ with values in $
U^2({\mathbb S})$, 
a process  
$(X_t,\eta_t)_{0 \le t \le T}$ solves the 
rearranged SHE 
\eqref{eq:V}
 if 
\begin{enumerate}
\item $(X_t)_{0 \le t \le T}$ is a continuous ${\mathbb F}$-adapted process with values in $
U^2({\mathbb S})$;
\item $(\eta_t)_{0 \le t \le T}$ is a continuous ${\mathbb F}$-adapted process with values in $H^{-2}_{\rm sym}({\mathbb S})$, starting from $0$ at $0$, such that, with probability 1, for any $u \in H^2_{\rm sym}({\mathbb S}) \cap U^2({\mathbb S})$, the path
$(\langle \eta_t,u \rangle)_{0 \le t \le T}$ is non-decreasing;
\item with probability 1, for any $u \in H^{2}_{\rm sym}({\mathbb S})$
and for any $t \in [0,T]$, 
\begin{equation*} 
	\begin{split}
		\langle  X_t-X_0,u \rangle    =  -  \int_0^t \langle {\mathcal V}\bigl(r,\cdot,{\rm Leb}_{\mathbb S} \circ X_r^{-1} \bigr),u \bigr\rangle \ud r +  \int_0^t \langle   {X}_r  ,\Delta     u \rangle dr +\langle W_t  ,u \rangle + \langle \eta_t ,u\rangle.
\end{split} 
\end{equation*}
\item for any $t \in [0,T]$, 
\begin{equation*}
	\lim_{\varepsilon\searrow0}\mathbb{E}\left[ \int_0^t  e^{\varepsilon\Delta}X_r   \cdot d  \eta_r \right]= 0.
\end{equation*}
\end{enumerate}
\end{defn}

One key technical result in our work is to show that, for a fixed choice of ${\mathcal V}$, Equation \eqref{eq:V} has a unique strong solution. This is an improvement of the results studied in 
\cite{delarueHammersley2022rshe}, in which ${\mathcal V}$ is just $0$. 

\begin{thm}
\label{thm:main:existence:uniqueness}
Given 
${\mathcal V} \in {\mathcal C}$ and a deterministic initial condition $X_0 \in U^2({\mathbb S})$, 
there exists a unique solution $(X_t,\eta_t)_{0 \le t \le T}$ to the rearranged SHE \eqref{eq:V} that satisfies 
Definition 
\ref{def:existence}.
Moreover, 
there exist constants $C,\varepsilon >0$
 such that 
\begin{equation} 
\label{eq:main:existence:uniqueness}
\begin{split}
&{\mathbb E} 
\Bigl[\bigl\|\nabla X_{t} \bigr\|_2^2 \Bigr]
\leq C + C \min \Bigl( \frac{1}{t} \| X_0 \|_2^2 , \bigl\| \nabla X_0 \bigr\|_2^2 \Bigr), \quad t \in (0,T],
\\
&{\mathbb E} 
\Bigl[ \exp \Bigl( \varepsilon \sup_{0 \le t \le T} \| X_t\|_2^2 \Bigr) 
\Bigr]
\leq C. 
\end{split}
\end{equation} 
\end{thm}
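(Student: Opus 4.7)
The plan is to construct the solution by reducing \eqref{eq:V} to the driftless reflected SHE of \cite{delarueHammersley2022rshe} via a Picard iteration on the measure argument of $\mathcal{V}$, and then to derive the a priori bounds \eqref{eq:main:existence:uniqueness} by It\^o's formula together with item 4 of Definition \ref{def:existence}. The first step is to handle the equation when the measure argument is frozen: given any ${\mathbb F}$-adapted process $(f_t)_{0 \le t \le T}$ uniformly bounded and with values in $U^2({\mathbb S})$, I would solve
\begin{equation*}
\ud X_t = -f_t \ud t + \Delta X_t \ud t + \ud W_t + \ud \eta_t, \qquad X_t \in U^2({\mathbb S}),
\end{equation*}
by essentially the same penalization/Galerkin scheme as in \cite{delarueHammersley2022rshe}, with the extra Lipschitz forcing $-f_t$: the key a priori estimates and the passage to the reflection limit carry through because $f$ is bounded in $L^\infty$ and does not interact with the reflection mechanism.

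Next comes a Picard loop for the mean field coupling: starting from $X^{(0)} \equiv X_0$, I define $X^{(n+1)}$ as the solution of the frozen-drift problem with $f^{(n)}_t := \mathcal{V}(t,\cdot,\mathrm{Leb}_{\mathbb S} \circ (X^{(n)}_t)^{-1})$. By Definition \ref{def:mathcalC}(3) and the isometry between $({\mathcal P}_2({\mathbb R}), {\mathbb W}_2)$ and $(U^2({\mathbb S}), \|\cdot\|_2)$ recalled in the introduction, one has $\|f^{(n)}_t - f^{(n-1)}_t\|_2 \le C \|X^{(n)}_t - X^{(n-1)}_t\|_2$. Applying It\^o's formula to the $L^2$-difference of two consecutive iterates, the Laplacian contributes $-2\|\nabla \cdot\|_2^2$, the noise cancels (same $W$), and the reflection contribution is non-positive by the Skorohod-type inequality for reflection onto the convex cone $U^2({\mathbb S})$; the resulting Gronwall estimate takes the factorial form $\sup_{t \le T} \|X^{(n+1)}_t - X^{(n)}_t\|_2^2 \le (CT)^n/n! \cdot \sup_{t \le T} \|X^{(1)}_t - X^{(0)}_t\|_2^2$, which gives a unique fixed point. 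The same computation applied to two solutions of \eqref{eq:V} yields pathwise uniqueness.

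For the a priori bounds I would apply It\^o's formula to $\|X_t\|_2^2$ and then to $\exp(\varepsilon \|X_t\|_2^2)$: the Laplacian produces $-2\|\nabla X_t\|_2^2$, the reflection term vanishes via item 4 of Definition \ref{def:existence} after a regularization by $e^{\delta\Delta}$, the bounded drift is absorbed through Young's inequality, and the trace term is finite thanks to $\lambda > 1/2$ (so that $\sum_k k^{-2\lambda} < \infty$); choosing $\varepsilon$ small enough and using Doob's inequality on the martingale part yields the exponential bound. The $H^1$-bound splits into two regimes: when $X_0 \in H^1$, a direct energy identity for $\|\nabla X_t\|_2^2$ gives the bound involving $\|\nabla X_0\|_2^2$ (the reflection again contributes non-positively, in accordance with the fact that the $\Delta$-semigroup preserves $U^2({\mathbb S})$); when $X_0$ is only in $L^2$, the $t^{-1}$ singularity is obtained by running a time-weighted variant of the same identity so that the integrated quantity $\int_0^t s \|\Delta X_s\|_2^2 \ud s$ absorbs the contribution of the rough initial condition, in the spirit of standard parabolic smoothing. \emph{The main obstacle} is the precise manipulation of the reflection measure $\eta_t \in H^{-2}_{\mathrm{sym}}({\mathbb S})$ in these It\^o-type identities at the $H^1$ level: this forces one to work with regularizations $e^{\delta \Delta} X_t$ throughout, and to track carefully the sign of each reflection contribution as $\delta \to 0$, following the analogous arguments of \cite{delarueHammersley2022rshe}; the bounded $U^2$-valued drift is a comparatively benign perturbation of that analysis.
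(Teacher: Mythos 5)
Your Picard/Gronwall part is fine: it is essentially the paper's own uniqueness argument (mollify by $e^{\varepsilon\Delta}$, use items (2) and (4) of Definition \ref{def:existence} to discard the reflection pairing, then the Lipschitz property of ${\mathcal V}$), and iterating it to couple the measure argument is unobjectionable \emph{once} the frozen-drift reflected equation is solved. But that is precisely where the weight of the theorem lies, and you defer it to ``essentially the same penalization/Galerkin scheme as in \cite{delarueHammersley2022rshe}''. That reference does not construct the reflected dynamics by penalization: both there and in the paper's Appendix the construction is a time-splitting scheme in which the reflection is realized by the rearrangement operation $\star$ applied at the mesh points, and the convergence rests on rearrangement inequalities; no penalization argument for the cone $U^2({\mathbb S})$ is available to quote. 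Moreover the drift is not a ``benign perturbation that does not interact with the reflection mechanism'': the discrete gradient estimate (Lemma \ref{lem:1:1}) and the tightness proof use in an essential way that ${\mathcal V}(t,\cdot,\mu)$ belongs to $U^2({\mathbb S})$ and is bounded, hence has total variation at most $4\|{\mathcal V}\|_\infty$, which upgrades the generic non-integrable smoothing bound $\|\nabla e^{s\Delta}{\mathcal V}\|_2^2\lesssim s^{-1}\|{\mathcal V}\|_2^2$ to the integrable $s^{-1/2}$ bound \eqref{eq:tightness:2}. Item (2) of Definition \ref{def:mathcalC} is therefore load-bearing for existence, and a scheme treating ${\mathcal V}$ as a generic bounded forcing does not reproduce these estimates.

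The second genuine gap is the $H^1$ bound. Your ``direct energy identity for $\|\nabla X_t\|_2^2$'' with the claim that ``the reflection again contributes non-positively'' is unsupported: item (4) of Definition \ref{def:existence} controls only the pairing of $\ud\eta_t$ with (mollified) $X_t$ itself, i.e.\ the $L^2$-level energy; there is no stated or obvious sign for the pairing of $\ud\eta_t$ with $\Delta X_t$ (or of $\nabla\ud\eta_t$ with $\nabla X_t$), and invoking that the heat semigroup preserves $U^2({\mathbb S})$ does not produce one. The paper avoids this issue entirely: the gradient estimate, including the $\min(t^{-1}\|X_0\|_2^2,\|\nabla X_0\|_2^2)$ shape, is proved at the level of the discrete scheme via the randomized-semigroup trick (the independent uniform $U$ inside $e^{rU\Delta}$) combined with the rearrangement inequality of \cite{delarueHammersley2022rshe}, and is then passed to the limit after mollification by $e^{\varepsilon\Delta}$; it is not obtained from a time-weighted energy identity for the reflected equation. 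Finally, a smaller but real defect: It\^o applied directly to $\exp(\varepsilon\|X_t\|_2^2)$ does not close, because the quadratic-variation term of order $\varepsilon^2\|X_t\|_2^2$ has nothing to absorb it (the Laplacian does not damp the zero Fourier mode). The paper first splits off the spatial mean $\bar X_t$, handles it separately (bounded drift plus a single Brownian motion), and uses the Poincar\'e inequality on $X_t-\bar X_t$ so that $-\|\nabla X_t\|_2^2$ dominates the quadratic variation; some such device (or a finite-horizon exponential-martingale argument) is needed in place of ``choose $\varepsilon$ small and use Doob''.
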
 

We just provide the proof of uniqueness at this stage of the paper, because the argument is repeatedly used in the paper. Existence, which is much more 
difficult to address, is treated 
in Appendix (Section \ref{appendix}).

\begin{proof} 
The proof of uniqueness is just a variant of the proof of 
\cite[Proposition 4.14]{delarueHammersley2022rshe}. For this reason, we just  present a sketch of it, the details 
being left to the reader.
Intuitively, for any two solutions $(X_t(x),\eta_t(x))_{t \geq 0,x \in {\mathbb S}}$
and $(X_t'(x),\eta_t'(x))_{t \geq 0,x \in {\mathbb S}}$, we have
\begin{equation} 
\label{eq:uniqueness:1}
\begin{split}
\ud_t \| X_t - X_t' \|_2^2 
&= -  2 \bigl\langle X_t - X_t', {\mathcal V}(t,\cdot,{\rm Leb}_{\mathbb S} \circ X_t^{-1}) - 
{\mathcal V}(t,\cdot,{\rm Leb}_{\mathbb S} \circ (X_t')^{-1})
\bigr\rangle_2 \ud t 
\\
&\hspace{15pt} 
- \| \nabla (X_t - X_t') \|_2^2 \ud t  
 + 
2 \bigl\langle X_t - X_t', \ud \eta_t - \ud \eta_{t}' \bigr\rangle_2,
\end{split}
\end{equation}
with the bracket $\langle \cdot, \cdot \rangle_2$ standing for the inner product in $L^2({\mathbb S})$. 
Rigorously, this argument is false, as the 
last integral with respect to $\ud \eta_t - \ud \eta_t'$ is properly defined only when the integrand takes
values in $H^2_{\rm sym}({\mathbb S})$. 
The proof performed in 
\cite[Proposition 4.14]{delarueHammersley2022rshe} 
consists in mollifying 
$X_t - X_t'$ by $e^{\varepsilon \Delta}$ and then in letting $\varepsilon$ tend $0$. In this way, we can 
do as if the term in the right-hand side were well-defined. 
The next step is to invoke items 
$(2)$ and $(4)$ in Definition 
\ref{def:existence}: altogether, they guarantee that the contribution of 
$\langle X_t - X_t', \ud \eta_t - \ud \eta_{t}' \rangle_2$
is negative. 
The proof is then easily completed by using the Lispchitz property of ${\mathcal V}$ 
(item $(3)$ 
in Definition 
\ref{def:mathcalC}), which says that 
$\| 
 {\mathcal V}(t,\cdot,{\rm Leb}_{\mathbb S} \circ X_t^{-1}) - 
{\mathcal V}(t,\cdot,{\rm Leb}_{\mathbb S} \circ (X_t')^{-1}) \|_2 
\leq C \| X_t - X_t' \|_2$, 
together with Gronwall's lemma. 
\end{proof}

\begin{rem}
\label{rem:measurability}
By 
\eqref{eq:cadlag:version},
we can easily change 
the version of $(X_t)_{t \geq 0}$
in Definition 
\ref{eq:cadlag:version} in such a way that
it still satisfies all the items of the definition and, for any $(t,\omega) \in [0,T]   \times \Omega$, 
the function $x \in {\mathbb S} \mapsto X_t(x)$ is left-continuous on $(0,1/2]$, right-continuous on $[-1/2,0)$ 
and continuous in $0$. 

In fact, more can be said from 
Theorem 
\ref{thm:main:existence:uniqueness}. 
Indeed, we know that, for any $t>0$ and with probability 1, $\| \nabla X_t \|_2 < \infty$, from which we deduce that, 
for any $t>0$ and
with probability 1, 
the mapping  
$x \in {\mathbb S} \mapsto X_t(x)$ 
is continuous. 
In particular, 
for any bounded and continuous 
function $\varphi : {\mathbb R} \rightarrow {\mathbb R}$, 
the mapping $
x \in {\mathbb S} \mapsto {\mathbb E} [ \varphi(X_t(x)) ]$ is continuous. 
\end{rem}

\begin{rem} 
Below, we often denote by $(X^0_t)_{0 \le t \le T}$ the solution to 
\eqref{eq:V}
when ${\mathcal V} \equiv 0$. Obviously, it can be defined in arbitrary time (even for $t > T$). 
Also, it induces the following semi-group, acting on bounded measurable functions 
$\phi : {\mathcal P}_2({\mathbb R}) \rightarrow {\mathbb R}$: 
\begin{equation*} 
{\mathscr P}^0_t \phi : \mu \mapsto 
{\mathbb E} \bigl[ \phi \bigl( 
\text{\rm Leb}_{\mathbb S} \circ X_t^{-1}\bigr) 
\bigr], \quad t \geq 0. 
\end{equation*} 
\end{rem}

\subsection{Definition of the mean field game}
\label{subse:def:mean field game}
With a given ${\mathcal V} \in {\mathcal C}$, we associate the following deterministic control problem (in the random environment formed by the solution 
$(X_t)_{0 \le t \le T}$ to 
\eqref{eq:V}):
\begin{equation*}
\inf 
J\Bigl((\gamma_t)_{0 \le t \le T};(X_t)_{0 \le t \le T} \Bigr), 
\end{equation*}
with
\begin{equation}
\label{eq:cost} 
\begin{split}
J\Bigl((\gamma_t)_{0 \le t \le T};(X_t)_{0 \le t \le T} \Bigr)
&:=
 {\mathbb E} \int_{\mathbb S} \biggl[ g\bigl(\Gamma_T(x),{\rm Leb}_{\mathbb S} \circ X_T^{-1} \bigr) 
\\
&+  \int_0^T 
\Bigl( 
f\bigl(\Gamma_t(x),{\rm Leb}_{\mathbb S} \circ X_t^{-1} \bigr) 
+
\frac12 
\vert \gamma_t(x) \vert^2 \Bigr)\ud t \biggr] \ud x,
\end{split}
\end{equation} 
where $f$ and $g$ are (measurable) real-valued function functions over 
${\mathbb R} \times {\mathcal P}({\mathbb R})$, at most of linear growth with respect to 
the spatial variable
and to the root of the second-order moment of the measure argument,  
i.e., $\vert f(x,\mu) \vert, \ \vert g(x,\mu) \vert \leq C ( 1 + \vert x \vert^2 + \int_{\mathbb R} \vert x \vert^2 \ud \mu(x) )^{1/2}$, 
and
the infimum is taken over pairs of ${\mathbb F}$-progressively-measurable random fields $\Gamma,\gamma$ from $[0,T] \times {\mathbb S}$
to ${\mathbb R}$, with ${\mathbb E} \int_0^T \int_{\mathbb S} \vert \gamma(x) \vert^2 \ud s \ud x< \infty$, that solve the dynamics
\begin{equation}
\label{eq:controlled:dynamics}
\ud\Gamma_t(x) = \ud X_t(x) +  \Bigl( \gamma_t(x) + {\mathcal V}\bigl(t,x,{\rm Leb}_{\mathbb S} \circ X_t^{-1} \bigr) \Bigr) \ud t, \quad t \in [0,T] \ ; 
\quad \Gamma_0(x) =X_0(x), \quad x \in {\mathbb S}. 
\end{equation} 
The key idea in this formula is very simple: without the forcing term $\Delta X_t(x) \ud t + \ud W_t(x)$, the equation for 
$(X_t(x))_{0 \le t \le T}$ just becomes
\begin{equation*}
\ud X_t(x) = - {\mathcal V}(t,x,{\rm Leb}_{\mathbb S} \circ X_t^{-1}) \ud t + \ud \eta_t(x).
\end{equation*} 
Still,
if 
the drift
can be written in 
feedback form, i.e., 
${\mathcal V}(t,x,{\rm Leb}_{\mathbb S} \circ X_t^{-1})
= \widetilde{\mathcal V}(t,X_t(x),{\rm Leb}_{\mathbb S} \circ X_t^{-1})$
as is shown below 
in 
the main statement (Theorem 
\ref{main statement}) for the mean field game under study, then 
the reflection term should be forgotten because the ordinary differential equation is expected to preserve monotonicity of the initial condition. 
In other words, 
the solution to the non-reflected equation
\begin{equation*}
\ud X_t(x) = - {\mathcal V}(t,x,{\rm Leb}_{\mathbb S} \circ X_t^{-1}) \ud t 
\end{equation*} 
is then expected to live in ${\mathcal C}$ when $X_0$ itself is in ${\mathcal C}$, even when there is no reflection in the dynamics. This says that, in absence of random forcing 
equation \eqref{eq:controlled:dynamics}
boils down to 
a mere controlled curve
$\dot{\Gamma}_t(x) = \gamma_t(x)$, the label $x$ here parametrizing the state of the initial condition. 
This explains intuitively  the connection with standard mean field games, all these explanations being addressed in a more rigorous manner in 
Proposition 
\ref{prop:MFG}. 

For the time being, 
we introduce the following definition.

\begin{defn}
\label{def:mfg:common}
We say that a field ${\mathcal V} \in {\mathcal C}$ forms a mean field equilibrium if 
the problem control \eqref{eq:cost}--\eqref{eq:controlled:dynamics} admits as optimal trajectory the curve
\begin{equation*} 
\bigl( \Gamma_t(x),\gamma_t(x) \bigr) = \bigl( X_t(x),- {\mathcal V}(t,x,{\rm Leb}_{\mathbb S} \circ X_t^{-1} ) \bigr), 
\quad x \in {\mathbb S}, 
\quad t \in [0,T]. 
\end{equation*} 
\end{defn}

Here is then our main statement: 

\begin{thm}
\label{main statement}
Assume that
$f$ and $g$ are $x$-differentiable and that 
$\partial_x f$
and
$\partial_x g$ are bounded over ${\mathbb R} \times {\mathcal P}_2({\mathbb R})$, non-decreasing in the argument $x$ and 
jointly Lipschitz continuous in $(x,\mu)$ (with ${\mathcal P}_2({\mathbb R})$ being equipped with 
${\mathbb W}_2$). Then,
 for any initial condition $X_0 \in U^2({\mathbb S})$, there exists a unique mean 
field equilibrium $(X_t(x))_{0 \leq t \leq T,x \in {\mathbb S}}$ in the sense of 
Definition 
\ref{def:mfg:common}
and 
there exists a certain ${\mathcal U} \in {\mathcal C}$ such that, for any initial condition $X_0 \in U^2({\mathbb S})$, the unique 
equilibrium is driven by ${\mathcal U}$.  

Moreover, there exists a measurable function 
$\tilde {\mathcal U} : [0,T] \times {\mathbb R} \times {\mathcal P}_2({\mathbb R}) \rightarrow {\mathbb R}$ (only depending on ${\mathcal U}$) such that,
for any 
mean field equilibrium $(X_t(x))_{0 \leq t \leq T,x \in {\mathbb S}}$ as above, 
 for almost every 
$t \in [0,T]$ and ${\mathbb P}$ almost surely,  
\begin{equation*} 
{\mathcal U}\bigl(t,x,{\rm Leb}_{\mathbb S} \circ X_t^{-1} \bigr) = \tilde {\mathcal U}\bigl(t,X_t(x),{\rm Leb}_{\mathbb S} \circ X_t^{-1} \bigr), \quad x \in {\mathbb S}. 
\end{equation*} 
\end{thm}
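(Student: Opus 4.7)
The plan is to reformulate the equilibrium condition as a forward--backward system indexed by the label $x\in{\mathbb S}$, solve it by a contraction argument built on the Lipschitz smoothing of the reflected SHE semi-group ${\mathscr P}^{0}_{t}$ established in \cite{delarueHammersley2022rshe}, and finally read off the distributed form $\tilde{\mathcal U}$ from the Markov structure of $(\mu_t := {\rm Leb}_{\mathbb S}\circ X_t^{-1})_{0\le t\le T}$.

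For a fixed ${\mathcal V}\in{\mathcal C}$, setting $Y_t(x):=\Gamma_t(x)-X_t(x)$ turns \eqref{eq:controlled:dynamics} into the pathwise ODE $\dot Y_t(x)=\gamma_t(x)+{\mathcal V}(t,x,\mu_t)$ with $Y_0(x)=0$, so the control problem decouples over $x$ into a standard stochastic problem in the random environment $(X_\cdot(x),\mu_\cdot)$. The Pontryagin principle yields the optimal feedback $\gamma^{\ast}_{t}(x)=-p_{t}(x)$ with
\[
p_{t}(x)={\mathbb E}\Bigl[\partial_{x}g\bigl(\Gamma_{T}(x),\mu_{T}\bigr)+\int_{t}^{T}\partial_{x}f\bigl(\Gamma_{s}(x),\mu_{s}\bigr)\,\mathrm{d}s\,\Big|\,{\mathcal F}_{t}\Bigr],
\]
so that the equilibrium condition of Definition~\ref{def:mfg:common} forces $Y\equiv 0$ and is equivalent to the fixed-point identity ${\mathcal V}(t,x,\mu_t)=p_t(x)$ along the trajectory. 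Defining $\Phi[{\mathcal V}](t,x,\mu)$ as the right-hand side above with the reflected SHE launched from $F_{\mu}^{-1}$ at time~$t$, equilibria correspond exactly to fixed points of $\Phi$ on ${\mathcal C}$.

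That $\Phi$ maps ${\mathcal C}$ into itself is checked piece by piece: boundedness from the $L^{\infty}$-bound on $\partial_{x}f,\partial_{x}g$; membership of $\Phi[{\mathcal V}](t,\cdot,\mu)$ in $U^{2}({\mathbb S})$ from the fact that $\partial_{x}f,\partial_{x}g$ are non-decreasing together with the monotonicity of $x\mapsto X_{s}(x)$ built into the quantile representation; and the decisive Lipschitz continuity in $\mu$ from the smoothing estimate for ${\mathscr P}^{0}_{t}$ of \cite{delarueHammersley2022rshe} together with the gradient bound in \eqref{eq:main:existence:uniqueness}, which produce a time-integrable Lipschitz constant for the conditional expectation. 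The same stability estimate applied to a perturbation of ${\mathcal V}$ turns $\Phi$ into a contraction on a short time horizon, iterated to cover $[0,T]$. The unique fixed point ${\mathcal U}$ is manifestly independent of $X_0$, since $\Phi[{\mathcal V}](t,x,\mu)$ is defined at a generic~$\mu$. This is the content of Section~\ref{sec:FB}.

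For the intrinsic structure addressed in Section~\ref{sec:intrinsic}, the key observation is that conditionally on the path $(\mu_{s})_{s\ge t}$, the reflected SHE pins $X_{s}$ down as the periodic quantile of $\mu_{s}$, so the label $x$ enters the integrand defining $p_{t}(x)$ only through the value $F_{\mu}^{-1}(x)$ it picks on the initial quantile. This allows one to define $\tilde{\mathcal U}(t,y,\mu)$ unambiguously for $y$ in the support of $\mu$ by picking any~$x$ with $F_{\mu}^{-1}(x)=y$ and setting $\tilde{\mathcal U}(t,y,\mu):={\mathcal U}(t,x,\mu)$; $\tilde{\mathcal U}$ is then extended measurably to ${\mathbb R}$ in an arbitrary way, which is irrelevant along equilibria since $X_{t}(\cdot)$ only visits the support of $\mu_{t}$. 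The main obstacle, and the only genuinely non-trivial input beyond \cite{delarueHammersley2022rshe}, is the Lipschitz-in-$\mu$ estimate for~$\Phi$: this is where the common noise truly does its regularizing job and where a noiseless first-order game would fail to yield a contraction. Once that is secured, the rest follows from standard Pontryagin/Markov arguments combined with Remark~\ref{rem:measurability} for measurability of the representatives involved.
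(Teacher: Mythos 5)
Your overall architecture for the first part matches the paper's: Pontryagin characterization of equilibria as fixed points of $\Phi$, short-time contraction, and an \textit{a priori} Lipschitz-in-$\mu$ bound obtained from the smoothing of the driftless semi-group so that the short-time construction can be iterated over $[0,T]$. However, the step you call ``decisive'' is precisely where your sketch skips the real mechanism. The smoothing estimate \eqref{eq:regularisation} holds for ${\mathscr P}^0_t$, i.e.\ for the \emph{driftless} dynamics, whereas $\Phi[{\mathcal V}](t,\cdot,\mu)$ is a conditional expectation under the dynamics driven by ${\mathcal V}$; you cannot apply \eqref{eq:regularisation} to it directly. The paper bridges this with Lemma \ref{lem:phi}: a Girsanov change of measure (whose Novikov condition is exactly what the gradient bound and the exponential moment in \eqref{eq:main:existence:uniqueness} are for) rewrites $\langle\theta,{\mathcal U}(t,\cdot,\mu)\rangle_2$ as ${\mathscr P}^0$ acting on bounded functionals plus a correction term $\Psi^\theta$ built from the martingale-representation integrands of the backward component (Lemmas \ref{lem:martingale}, \ref{lem:2:5}), and the size of $\Psi^\theta$ is itself controlled by the Lipschitz constant of ${\mathcal U}$ (Lemma \ref{lem:2:4}, which needs its own Girsanov perturbation argument). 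The resulting inequality \eqref{eq:induction:delta} is self-referential and is closed by a block-wise backward iteration. None of this appears in your proposal, and without it ``smoothing $+$ gradient bound $\Rightarrow$ Lipschitz in $\mu$'' is not a proof.

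The second part contains a genuine error. You claim that the label $x$ enters $p_t(x)$ only through $F_\mu^{-1}(x)$, so that $\tilde{\mathcal U}(t,y,\mu):={\mathcal U}(t,x,\mu)$ is unambiguous for any $x$ with $F_\mu^{-1}(x)=y$. This is false in general: for $s>t$ the evolution of $X_s(x)$ depends on $x$ directly through the noise $W_s(x)=\sum_k (Qe_k)(x)B^k_s$ and through the Laplacian, so two labels $x\neq x'$ with $X_t(x)=X_t(x')$ (a flat piece of the initial quantile) will in general satisfy $X_s(x)\neq X_s(x')$ afterwards, and ${\mathcal U}(t,x,\mu)$ need not equal ${\mathcal U}(t,x',\mu)$. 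This is exactly why the paper does \emph{not} attempt a well-posed quotient definition: it sets $\tilde{\mathcal U}(t,y,\mu):={\mathcal U}(t,F_\mu(y)/2,\mu)$ unconditionally (see \eqref{eq:tildeU:2}) and then proves the non-trivial fact that, along the equilibrium, for almost every $t$ and ${\mathbb P}$-a.s., $\int_{\mathbb S}\int_{\mathbb S}{\mathbf 1}_{\{X_t(x)=X_t(y)\}}\,\ud x\,\ud y=0$, so that $X_t$ is strictly increasing and Lemma \ref{lem:X:invertible} gives the identity ${\mathcal U}(t,x,\mu_t)=\tilde{\mathcal U}(t,X_t(x),\mu_t)$. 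That invertibility lemma rests on the It\^o formula of \cite{delarueHammersley2022ergodicrshe} applied to the interaction functional $\Xi$, an occupation-of-the-diagonal argument exploiting the specific cosine structure of the noise, the vanishing of $\nabla X_s$ on level sets, and a Girsanov reduction to the driftless case. This is the genuinely new ingredient of Section \ref{sec:intrinsic}, and your proposal supplies nothing in its place.
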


The following comments are in order: 
\begin{enumerate}
\item Here, the functions $f$ and $g$ are convex in $x$ because
the derivatives are non-decreasing in the argument $x$. This is substantially different from assuming that 
$f$ and $g$ have any form of monotonicity in the measure argument. In particular, our functions $f$ and $g$ here do NOT satisfy the Lasry-Lions monotonicity condition. Our statement should be thus regarded as a regularization by noise result. 

Actually, the need to assume $f$ and $g$ to be convex in $x$ comes from the fact there is no idiosyncractic noise in our model. In turn, there is no smoothing effect in the 
$x$-variable and we must rely on standard convexity properties in $x$ to prevent the emergence of singularities
(and thus lack of uniqueness). 
Obviously, this is part of our project to extend our results to models with an 
idiosyncratic noise, but this would require first to address 
the analogue of 
\cite{delarueHammersley2022rshe} for models with 
an idiosyncratic noise. 

\item The existence of a function $\tilde {\mathcal U}$ is a crucial point in our approach. It says that the equilibrium feedback in the game with common noise can be computed in terms of the sole states of the agent (which is here encoded through $X_t(x)$) and of the distribution (which is here encoded through ${\rm Leb}_{\mathbb S}
\circ X_t^{-1}$). The specific value of the label $x$ does not need to be known. In this sense, the equilibrium 
has the same distributed form as in standard mean field games with common noise. 
This is an important feature.

Furthermore, we insist on the fact that the two mappings ${\mathcal U}$ and $\tilde{\mathcal U}$ are independent of the initial condition. This is 
a form of Markov property that is very connected to the notion of master equation in mean field games. 
Would the forcing be absent, the reader who is aware of the theory would indeed easily identify $\tilde{\mathcal U}$ with the derivative in $x$ of the solution to 
the master equation (see \cite{cardaliaguet2019master}). 
In presence of the forcing $ \Delta X_t(x) \ud t + \ud W_t(x)$, which plays the role of a common noise, 
we are still lacking 
a similar sharp 
correspondance, but we conjecture 
that it holds true. In fact, the form of the master equation could be inferred from the It\^o rule established in 
the parallel 
work \cite{delarueHammersley2022ergodicrshe}, but a rigorous analysis of the associated Kolmogorov 
equation is still missing at this stage. This is our plan to come back to this point in the future. 

\item The result is just stated for a cost functional with a linear control and a quadratic Lagrangian.  We have indeed chosen to limit the presentation to this simpler framework. 
The reason is that almost all the technical aspects due to the random forcing are already present in the current framework. In particular, our result should 
be considered as a proof of concept. However, it would make perfect sense, for practical purposes, to extend the analysis to more general convex Hamiltonians $H$, depending on 
both the state variable $x$ and the momentum $p$ (as in the usual formulation of mean field games). Typically, one would then require $H$ to be convex in $(x,p)$, with 
strictly positive second order derivatives in $p$ (uniformly in $x$) and bounded derivatives in $x$ (uniformly in $p$).  Subsequently, the term $-{\mathcal V}(t,x,\textrm{\rm Leb}_{\mathbb S} 
\circ X_t^{-1})$ in 
\eqref{eq:V} would be replaced by 
$-\partial_p H(X_t(x), {\mathcal V}(t,x,\textrm{\rm Leb}_{\mathbb S} 
\circ X_t^{-1}))$. This would make slightly more difficult the analysis of the equation \eqref{eq:V}  (in particular, one should adapt the arguments given in 
Section 
\ref{appendix}), but the spirit of the analysis would remain very much the same. As for the representation of ${\mathcal V}$ 
(see Proposition 
\ref{prop:2:7:new} below), this would yield an additional term $-\partial_x H(X_s(x), {\mathcal V}(s,x,\textrm{\rm Leb}_{\mathbb S} 
\circ X_s^{-1}))$ in the integral appearing in the right-hand side of 
\eqref{eq:pontryagin}, but 
the principles underpinning the main smoothing result (see Lemma \ref{lem:phi})
would also remain the same. 
\end{enumerate}

Throughout the analysis below, the assumption of Theorem 
\ref{main statement} is in force. 

\subsection{Characterization of mean field equilibria}

The following result is an adaptation of the stochastic Pontryagin principle and 
is key in our proof of Theorem 
\ref{main statement}: 

\begin{prop}
\label{prop:2:7:new}
A field
${\mathcal V} \in {\mathcal C}$ forms a
mean field equilibrium if and only if, 
for almost every $t \in [0,T]$,
${\mathbb P}$-almost surely, 
for all $x \in {\mathbb S}$,
\begin{equation}
\label{eq:pontryagin}
\begin{split}
&{\mathcal V}\bigl(t,x,{\rm Leb}_{\mathbb S} \circ X_t^{-1} 
\bigr)
 \\
&= {\mathbb E} \biggl[ \partial_x g \bigl( X_T(x), 
{\rm Leb}_{\mathbb S} \circ X_T^{-1} \bigr)
+ \int_t^T 
\partial_x f \bigl( X_s(x), 
{\rm Leb}_{\mathbb S} \circ X_s^{-1} \bigr)
\ud s
 \, \vert \, {\mathcal F}_t \biggr], 
 \end{split} 
\end{equation} 
with $(X_t)_{0 \le t \le T}$ being the
 solution of 
 \eqref{eq:V}. 
 \end{prop}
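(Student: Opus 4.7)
My plan is to view the control problem in Subsection~\ref{subse:def:mean field game} for \emph{fixed} random environment $(X_t)_{0 \le t \le T}$ as a strictly convex optimization over the ${\mathbb F}$-progressively-measurable control $\gamma$. The key observation is that, once $(X_t)$ and ${\mathcal V}$ are frozen, equation \eqref{eq:controlled:dynamics} integrates explicitly:
\begin{equation*}
\Gamma_t(x) = X_t(x) + \int_0^t \gamma_s(x) \, \ud s + \int_0^t {\mathcal V}\bigl(s,x,{\rm Leb}_{\mathbb S} \circ X_s^{-1}\bigr) \, \ud s,
\end{equation*}
so the control $\gamma$ enters $\Gamma$ only through a simple time-integral. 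Combined with the convexity of $f$ and $g$ in $x$ (following from the monotonicity assumption on $\partial_x f, \partial_x g$ in Theorem~\ref{main statement}) and the quadratic term $\tfrac12|\gamma|^2$, the functional $J$ is jointly convex in $(\Gamma,\gamma)$ hence strictly convex in $\gamma$. Therefore the first-order condition is both necessary and sufficient for optimality, and establishing Proposition~\ref{prop:2:7:new} reduces to computing this condition.

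The next step is to compute the Gateaux derivative of $J$ at a candidate optimum in the direction of a perturbation $\beta$ that is ${\mathbb F}$-progressively-measurable and square-integrable. Writing $Y_t(x) := \int_0^t \beta_s(x) \ud s$ for the induced perturbation of $\Gamma$, and using the boundedness of $\partial_x f, \partial_x g$ (which legitimates differentiation under the expectation and the $\int_{\mathbb S}$ integral), I obtain
\begin{equation*}
\delta J(\beta) = \EE \int_{\mathbb S} \biggl[ \partial_x g\bigl(\Gamma_T(x),\mu_T\bigr) Y_T(x) + \int_0^T \Bigl( \partial_x f\bigl(\Gamma_t(x),\mu_t\bigr) Y_t(x) + \gamma_t(x) \beta_t(x) \Bigr) \ud t \biggr] \ud x,
\end{equation*}
where $\mu_t := {\rm Leb}_{\mathbb S} \circ X_t^{-1}$. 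Swapping the order of integration by Fubini in the first two terms to move the factor $\beta_s(x) = \dot{Y}_s(x)$ outside yields
\begin{equation*}
\delta J(\beta) = \EE \int_{\mathbb S} \int_0^T \beta_s(x) \biggl[ \gamma_s(x) + \partial_x g\bigl(\Gamma_T(x),\mu_T\bigr) + \int_s^T \partial_x f\bigl(\Gamma_t(x),\mu_t\bigr) \ud t \biggr] \ud s \, \ud x.
\end{equation*}

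The final step is to exploit the arbitrariness of $\beta$. Since the admissible perturbations include all bounded ${\mathbb F}$-progressively-measurable $\beta$ (measurable jointly in $(s,x,\omega)$), setting $\delta J(\beta) = 0$ and conditioning on ${\mathcal F}_s$ gives, for $\ud s \otimes \ud x \otimes \ud \PP$-a.e. $(s,x,\omega)$,
\begin{equation*}
\gamma_s(x) = - \EE \biggl[ \partial_x g\bigl(\Gamma_T(x),\mu_T\bigr) + \int_s^T \partial_x f\bigl(\Gamma_t(x),\mu_t\bigr) \ud t \, \bigg| \, {\mathcal F}_s \biggr].
\end{equation*}
Specializing to the equilibrium candidate $\gamma_s(x) = -{\mathcal V}(s,x,\mu_s)$ and $\Gamma_t = X_t$ produces \eqref{eq:pontryagin}. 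Conversely, if \eqref{eq:pontryagin} holds then $\delta J(\beta) = 0$ for all admissible $\beta$ at $\gamma = -{\mathcal V}$, and by strict convexity this critical point is the unique minimizer, i.e.\ ${\mathcal V}$ forms a mean field equilibrium.

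The main obstacle I expect is justifying that the pointwise-in-$(s,x)$ identity \eqref{eq:pontryagin} follows from the integral first-order condition, which requires verifying that the class of test perturbations $\beta$ is rich enough in the progressively-measurable ${\mathbb L}^2$ sense, and that all $x \in {\mathbb S}$ may be recovered (not merely a.e.), which is where the continuity observation of Remark~\ref{rem:measurability} together with the continuity in $x$ of $s \mapsto \EE[\partial_x f(X_s(x),\mu_s) \mid {\mathcal F}_s]$ for $s > 0$ should be invoked. Apart from this regularity bookkeeping, the argument is a standard convex Pontryagin-type derivation adapted to the measure-valued environment.
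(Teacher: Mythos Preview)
Your proposal is correct and follows essentially the same approach as the paper's own proof: compute the Gateaux derivative of $J$ at the candidate $\gamma=-{\mathcal V}$, rewrite via Fubini to isolate $\beta_s(x)$, use the arbitrariness of the ${\mathbb F}$-progressively-measurable perturbation (together with the tower property) to obtain the identity $\ud t\otimes\ud{\mathbb P}$-a.e.\ for a.e.\ $x$, and then upgrade to all $x\in{\mathbb S}$ by one-sided continuity of both sides (your Remark~\ref{rem:measurability} reference is exactly what the paper invokes). For the converse the paper uses plain convexity of $f,g$ in $x$ to bound $J$ from below rather than invoking strict convexity, but your argument via strict convexity of $\gamma\mapsto J$ is equally valid.
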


We recall from 
Remark 
\ref{rem:measurability} that 
the mapping $(s,\omega,x) \mapsto X_s(x)$ is measurable, which guarantees that 
the term appearing inside the conditional expectation in the right-hand side of 
\eqref{eq:pontryagin}
is indeed a random variable. 

Also, 
the reader should regard 
Proposition
\ref{prop:2:7:new}
as a fixed point condition on the mapping ${\mathcal V}$. This condition is reminiscent of the fixed point condition in standard mean field games.

\begin{proof}
We start with the necessary condition, assuming that ${\mathcal V}$ forms a mean field equilibrium. 
Solving 
\eqref{eq:V}, 
with $(X_t)_{0 \le t \le T}$ as solution, 
we then consider 
the control problem 
\eqref{eq:cost}--\eqref{eq:controlled:dynamics}. 

We know that, for any pair control $(\gamma_t)_{0 \le t \le T}$
\begin{equation*} 
\frac{\ud}{\ud \varepsilon}_{\vert \varepsilon=0} 
J \Bigl( \bigl(- {\mathcal V}(t,\cdot,{\rm Leb}_{\mathbb S} \circ X_t^{-1}) + \varepsilon \gamma_t \bigr)_{0 \le t \le T};
\bigl( X_t\bigr)_{0 \le t \le T} 
\Bigr) =0.
\end{equation*}
Above, $(X_t)_{0 \le t \le T}$ does not depend on $(\gamma_t)_{0 \le t \le T}$ because it is the equilibrium. 
By 
\eqref{eq:controlled:dynamics}, the controlled curve 
associated with 
$( - {\mathcal V}(t,\cdot,{\rm Leb}_{\mathbb S} \circ X_t^{-1}) + \varepsilon \gamma_t)_{0 \le t \le T}$ 
 is $(X_t + \varepsilon \Gamma_t)_{0 \le t \le T}$ (pay attention that the controlled curve is not 
 $(\Gamma_t)_{0 \le t \le T}$ as one may think from 
\eqref{eq:controlled:dynamics}), where 
\begin{equation*} 
\Gamma_t(x)=  \int_0^t
\gamma_s(x)  \ud s, \quad t \in [0,T], \quad x \in {\mathbb S}, 
\end{equation*} 
from which we get (by differentiating w.r.t. $\varepsilon$) 
that, for any $t \in [0,T]$, 
\begin{equation}
\label{eq:proof:pontryagin} 
\begin{split}
&{\mathbb E}
\int_{\mathbb S}
 \biggl[ \partial_x g\bigl(X_T(x),{\rm Leb}_{\mathbb S} \circ X_T^{-1} \bigr) 
 \Gamma_T(x)
 \\
&\hspace{5pt}  + \int_0^T \Bigl( 
\partial_x f\bigl(X_t(x),{\rm Leb}_{\mathbb S} \circ X_t^{-1} \bigr) 
\Gamma_t(x)
-
{\mathcal V}\bigl(t,X_t(x),{\rm Leb}_{\mathbb S} \circ X_T^{-1} \bigr)
 \gamma_t(x) \ud t \Bigr) \biggr] \ud x = 0, 
\end{split} 
\end{equation} 
which may be rewritten as 
\begin{equation*} 
\begin{split}
&{\mathbb E}
\int_0^T 
\int_{\mathbb S}
 \Bigl[ 
 \partial_x g\bigl(X_T(x),{\rm Leb}_{\mathbb S} \circ X_T^{-1} \bigr)
 + \int_t^T 
 \partial_x f\bigl(X_s(x),{\rm Leb}_{\mathbb S} \circ X_s^{-1} \bigr)
  \ud s
  \\
&\hspace{45pt} - {\mathcal V}\bigl(t,X_t(x),{\rm Leb}_{\mathbb S} \circ X_T^{-1} \bigr) 
 \Bigr] \gamma_t(x)   \ud x \, \ud t = 0.
 \end{split}
\end{equation*} 
We deduce that, for almost every $t \in [0,T]$, ${\mathbb P}$ almost surely, for almost every $x \in {\mathbb S}$, 
\eqref{eq:pontryagin}
is satisfied. By one-sided continuity in $x$ of the two left-hand sides, we deduce that 
\eqref{eq:pontryagin}
holds true 
for almost every $t \in [0,T]$, ${\mathbb P}$ almost surely, for any $x \in {\mathbb S}$. 

We now study the converse, assuming that 
\eqref{eq:pontryagin}
is satisfied.  
Reverting back the computations, 
we  easily obtain 
\eqref{eq:proof:pontryagin}. Using the convexity of 
$f$ and $g$ in $x$, we easily deduce that 
\begin{equation*}
\begin{split}
&J \Bigl( \bigl( - {\mathcal V}(t,\cdot,{\rm Leb}_{\mathbb S} \circ X_t^{-1}) + \gamma_t \bigr)_{0 \le t \le T};
\bigl( X_t\bigr)_{0 \le t \le T} 
\Bigr)
\\
&\geq
J \Bigl( \bigl( - {\mathcal V}(t,\cdot,{\rm Leb}_{\mathbb S} \circ X_t^{-1})  \bigr)_{0 \le t \le T};
\bigl( X_t\bigr)_{0 \le t \le T} 
\Bigr)
\\
&\hspace{15pt} +
{\mathbb E}
\int_{\mathbb S}
 \biggl[ \partial_x g\bigl(X_T(x),{\rm Leb}_{\mathbb S} \circ X_T^{-1} \bigr) 
 \Gamma_T(x)
 \\
&\hspace{20pt}  + \int_0^T \Bigl( 
\partial_x f\bigl(X_t(x),{\rm Leb}_{\mathbb S} \circ X_t^{-1} \bigr) 
\Gamma_t(x)
-
{\mathcal V}\bigl(t,X_t(x),{\rm Leb}_{\mathbb S} \circ X_t^{-1} \bigr)
 \gamma_t(x)  \Bigr) \ud t \biggr] \ud x
 \\
 &= J \Bigl( \bigl( - {\mathcal V}(t,\cdot,{\rm Leb}_{\mathbb S} \circ X_t^{-1})  \bigr)_{0 \le t \le T};
\bigl( X_t\bigr)_{0 \le t \le T} 
\Bigr),
\end{split}
\end{equation*} 
with the last line following from 
\eqref{eq:proof:pontryagin}. This holds 
for any choice of $(\gamma_t)_{0 \le t \le T}$. By linearity of the set of controls, we get the result. 
\end{proof}

The next statement elaborates on the previous one.
The proof is deferred to the next section. 

\begin{prop}
\label{prop:fixed:point} 
There exists a unique mean field equilibrium 
for any initial condition $X_0 \in U^2({\mathbb S})$ 
if there exists a unique field
${\mathcal V} \in {\mathcal C}$
satisfying,  
for any $(t,x,\mu) \in [0,T] \times {\mathbb R} 
\times {\mathcal P}_2({\mathbb R})$, 
\begin{equation}
\label{eq:eq:U} 
\begin{split}
{\mathcal V}(t,x,\mu) = {\mathbb E} \biggl[ \partial_x g \bigl( X_T(x), {\rm Leb}_{\mathbb S} \circ X_T^{-1} \bigr)
+ \int_t^T 
\partial_x f \bigl( X_s(x), {\rm Leb}_{\mathbb S} \circ X_s^{-1} \bigr) \ud s
 \biggr], 
\end{split} 
\end{equation} 
whenever $(X_s)_{t \leq s \leq T}$ 
solves the equation \eqref{eq:V} driven by ${\mathcal V}$ but with 
$X_t \sim \mu$ as initial condition. 
\end{prop}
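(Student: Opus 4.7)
The plan is to use Proposition \ref{prop:2:7:new} to characterise equilibria through \eqref{eq:pontryagin}, and then to exploit the Markov property of the reflected SPDE \eqref{eq:V} in order to upgrade the conditional identity \eqref{eq:pontryagin} to the unconditional fixed-point identity \eqref{eq:eq:U}. The uniqueness of the latter then plays the role of a decoupling-field uniqueness theorem, forcing uniqueness of the equilibrium.

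For existence, let $\mathcal{V} \in \mathcal{C}$ be the unique solution of \eqref{eq:eq:U} and invoke Theorem \ref{thm:main:existence:uniqueness} to obtain the solution $(X_t)_{0 \le t \le T}$ of \eqref{eq:V} driven by $\mathcal{V}$ from $X_0$. Since the drift in \eqref{eq:V} is a deterministic function of $(t, X_t)$, the process $(X_t)$ is Markovian in $U^2(\mathbb{S})$. Using this Markov property, together with the identification of each $X_t \in U^2(\mathbb{S})$ with $F^{-1}_{\text{Leb}_\mathbb{S} \circ X_t^{-1}}$, one rewrites the conditional expectation in \eqref{eq:pontryagin} as the unconditional expectation in \eqref{eq:eq:U} evaluated at $\mu = \text{Leb}_\mathbb{S} \circ X_t^{-1}$, which is precisely $\mathcal{V}(t, x, \text{Leb}_\mathbb{S} \circ X_t^{-1})$. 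Proposition \ref{prop:2:7:new} then certifies that $\mathcal{V}$ forms an equilibrium from $X_0$.

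For uniqueness, suppose $\mathcal{V}' \in \mathcal{C}$ also forms an equilibrium from $X_0$, with process $(X'_t)$. For each $(t, x, \mu)$, set
\begin{equation*}
\hat{\mathcal{V}}(t, x, \mu) := \mathbb{E}\biggl[ \partial_x g\bigl(Y_T(x), \text{Leb}_\mathbb{S} \circ Y_T^{-1}\bigr) + \int_t^T \partial_x f\bigl(Y_s(x), \text{Leb}_\mathbb{S} \circ Y_s^{-1}\bigr) \, \ud s \biggr],
\end{equation*}
where $(Y_s)_{t \le s \le T}$ solves \eqref{eq:V} driven by $\mathcal{V}'$ with deterministic initial value $F^{-1}_\mu$ at time $t$. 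An $L^2$-stability estimate of the form $\mathbb{E}\|Y^\mu_s - Y^\nu_s\|_2^2 \le e^{C(s-t)} \mathbb{W}_2^2(\mu, \nu)$, obtained by a variant of the uniqueness proof of Theorem \ref{thm:main:existence:uniqueness}, combined with the boundedness, monotonicity, and Lipschitz hypotheses on $\partial_x f$ and $\partial_x g$, shows that $\hat{\mathcal{V}} \in \mathcal{C}$. Applying the Markov property to \eqref{eq:V} driven by $\mathcal{V}'$ from an arbitrary deterministic initial condition and invoking an extension of Proposition \ref{prop:2:7:new}, one argues that $\mathcal{V}'$ and $\hat{\mathcal{V}}$ generate the same trajectories of \eqref{eq:V} from every starting point, so that $\hat{\mathcal{V}}$ itself satisfies \eqref{eq:eq:U}. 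The uniqueness hypothesis then forces $\hat{\mathcal{V}} = \mathcal{V}$, and hence $(X'_t)$ solves \eqref{eq:V} driven by $\mathcal{V}$ from $X_0$; Theorem \ref{thm:main:existence:uniqueness} makes it coincide with the $\mathcal{V}$-driven process constructed in the existence step.

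The main obstacle lies in the \emph{Markovianisation} step: we must lift a field $\mathcal{V}'$, which a priori interacts with \eqref{eq:V} only along the specific trajectory issued from $X_0$, into a Lipschitz field $\hat{\mathcal{V}}$ on all of $[0,T] \times \mathbb{S} \times \mathcal{P}_2(\mathbb{R})$, and verify both that $\hat{\mathcal{V}} \in \mathcal{C}$ and that it satisfies the global fixed-point equation \eqref{eq:eq:U}. Membership in $\mathcal{C}$ hinges on the strong well-posedness and $L^2$-stability of \eqref{eq:V} supplied by Theorem \ref{thm:main:existence:uniqueness}; the global fixed-point property is the more delicate point and requires a forward-backward decoupling argument that extends Proposition \ref{prop:2:7:new} beyond the specific equilibrium trajectory from $X_0$.
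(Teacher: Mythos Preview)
Your existence argument is correct and matches the paper's: the Markov property (formalised in the paper as Lemma~\ref{lem:martingale}) converts the conditional identity of Proposition~\ref{prop:2:7:new} into the unconditional one \eqref{eq:eq:U}, so the unique fixed point of $\Phi$ is an equilibrium from every $X_0$.

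The uniqueness argument, however, has a genuine gap that you yourself flag in the last paragraph. Being an equilibrium from the \emph{single} initial condition $X_0$ only constrains $\mathcal{V}'$ along the trajectory $(\textrm{Leb}_{\mathbb S}\circ (X'_t)^{-1})_{0\le t\le T}$; it says nothing about $\mathcal{V}'$ at other measures $\mu$. Consequently there is no reason for $\hat{\mathcal V}:=\Phi(\mathcal V')$ to satisfy the \emph{global} fixed-point equation \eqref{eq:eq:U}: the relation $\hat{\mathcal V}=\Phi(\hat{\mathcal V})$ would require $\mathcal V'(s,\cdot,\nu)=\hat{\mathcal V}(s,\cdot,\nu)$ for all $\nu$ reached by the $\hat{\mathcal V}$-driven process from an arbitrary $(t,\mu)$, and these $\nu$'s need not lie on the original $X'$-trajectory. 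Your sentence ``$\mathcal V'$ and $\hat{\mathcal V}$ generate the same trajectories from every starting point'' is therefore unjustified, and the appeal to the uniqueness hypothesis on \eqref{eq:eq:U} does not close.

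The paper circumvents this obstacle by \emph{not} attempting to show that $\Phi(\mathcal V')$ is a global fixed point. Instead it compares the two fields \emph{only along the $\mathcal V'$-trajectory}: for $\mu'=\textrm{Leb}_{\mathbb S}\circ (X'_t(\omega))^{-1}$, both $\mathcal U(t,\cdot,\mu')$ and $\mathcal V'(t,\cdot,\mu')$ admit the representation formula (the first because $\mathcal U=\Phi(\mathcal U)$ globally, the second by Proposition~\ref{prop:2:7:new} plus regular conditional probability). Their difference is controlled, via the stability estimate \eqref{eq:uniqueness:1} and the Lipschitz property of $\mathcal U$, by $(T-t)$ times the supremum over $s\in[t,T]$ of the same difference evaluated along the $\mathcal V'$-trajectory. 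The crucial point is that the resulting small-time step $\delta$ depends only on the Lipschitz constant of the \emph{known} fixed point $\mathcal U$, so one can iterate backward with a fixed step all the way to $t=0$. This yields $\mathcal U=\mathcal V'$ along the $\mathcal V'$-trajectory, hence $X'=X$, which is the desired uniqueness of the equilibrium. No global identification of the fields is needed or claimed.
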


Identity 
\eqref{eq:eq:U} 
should be regarded as a forward-backward fixed point problem since
$(X_s)_{t \leq s \leq T}$ 
depends on ${\mathcal V}$.

\section{Analysis of the forward-backward fixed point problem} 
\label{sec:FB}

The purpose of this section is to kill two birds with one stone: not only we prove 
that the fixed point problem 
\eqref{eq:eq:U} 
has a unique solution but we also establish 
Proposition 
\ref{prop:fixed:point}. 
Throughout the section, we thus consider the following map 
\begin{equation*} 
\begin{split}
\Phi &: {\mathcal V} \in {\mathcal C} 
\\
&\mapsto 
\biggl( {\mathcal U} : (t,x,\mu) \mapsto   {\mathbb E} \biggl[ \partial_x g \bigl( X_T(x), {\rm Leb}_{\mathbb S} \circ X_T^{-1} \bigr)
+ \int_t^T 
\partial_x f \bigl( X_s(x), {\rm Leb}_{\mathbb S} \circ X_s^{-1} \bigr) \ud s \biggr] \biggr), 
\end{split} 
\end{equation*} 
where $(X_s)_{t \leq s \leq T}$ in the right-hand side
 solves the equation \eqref{eq:V} with $X_t \sim \mu$ as initial condition.

\subsection{Short time analysis}

\begin{lem} 
\label{lem:U} 
For ${\mathcal V} \in {\mathcal C}$, the function ${\mathcal U}=\Phi({\mathcal V})$ is well-defined and for any $(t,\mu) \in [0,T) \times {\mathcal P}_2({\mathbb R})$, 
the function $x \in {\mathbb S} \mapsto {\mathcal U}(t,x,\mu)$ is bounded and continuous and belongs to ${\mathcal C}$. 
 Moreover, there exists $c>0$ such that, for 
$T \leq c$, we can find a constant $C_c$ with the following property: if ${\mathcal V}$ is $C_c$-Lipschitz in $\mu$ (when regarded as an $L^2({\mathbb S})$-valued function), then 
$\Phi({\mathcal V})$ is also $C_c$-Lipschitz continuous (also when it is regarded as an $L^2({\mathbb S})$-valued function). 
\end{lem}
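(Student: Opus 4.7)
The plan is to verify the four required properties in the order stated, starting with the straightforward ones. For the well-definedness and boundedness of $\mathcal{U}=\Phi(\mathcal{V})$, I observe that the assumptions of Theorem \ref{main statement} impose that $\partial_x f$ and $\partial_x g$ are bounded on $\mathbb{R} \times \mathcal{P}_2(\mathbb{R})$, so that (using the joint measurability of $(\omega,x) \mapsto X_s(\omega,x)$ noted in Remark \ref{rem:measurability}) the expression $\Phi(\mathcal{V})$ is well-defined and yields a function bounded by $\|\partial_x g\|_\infty + T\|\partial_x f\|_\infty$.

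Next I would address the $x$-regularity and structural properties at fixed $(t,\mu) \in [0,T) \times \mathcal{P}_2(\mathbb{R})$. For continuity, the bound $\mathbb{E}[\|\nabla X_s\|_2^2]<\infty$ supplied by Theorem \ref{thm:main:existence:uniqueness} ensures that, for every $s>t$, the path $x \mapsto X_s(x)$ is $\mathbb{P}$-almost surely continuous (cf.\ Remark \ref{rem:measurability}); combined with the continuity of $\partial_x f$ and $\partial_x g$ and their uniform bound, dominated convergence (and Fubini for the $ds$-integral) delivers continuity of $x \mapsto \mathcal{U}(t,x,\mu)$. For membership in $U^2(\mathbb{S})$, I would use that $X_s$ almost surely takes values in $U^2(\mathbb{S})$, so that $x \mapsto X_s(x)$ is symmetric with respect to $0$ and non-decreasing on $[0,1/2]$; the monotonicity of $\partial_x f(\cdot,\nu)$ and $\partial_x g(\cdot,\nu)$ assumed in Theorem \ref{main statement} is then preserved under composition with $X_s$, and symmetry and monotonicity survive taking expectation and integrating in $s$, so that $\mathcal{U}(t,\cdot,\mu)$ lies in $U^2(\mathbb{S})$.

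The central point is the Lipschitz estimate in $\mu$. For two initial laws $\mu,\nu$ started at time $t$ from their quantile representatives $F_\mu^{-1}, F_\nu^{-1} \in U^2(\mathbb{S})$, I would run verbatim the pathwise $L^2$-argument from the uniqueness proof of Theorem \ref{thm:main:existence:uniqueness}: the Laplacian and reflection contributions are non-positive, and Cauchy--Schwarz combined with the $C_c$-Lipschitz property of $\mathcal{V}$ and the isometric identity $\mathbb{W}_2(\mu_s,\nu_s) = \|X_s-Y_s\|_2$ (valid because $X_s,Y_s \in U^2(\mathbb{S})$ are the periodic quantile representatives of their pushforwards) yield $\langle X_s - Y_s, \mathcal{V}(s,\cdot,\mu_s) - \mathcal{V}(s,\cdot,\nu_s)\rangle_2 \leq C_c \|X_s-Y_s\|_2^2$. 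Gronwall's lemma then gives the pathwise bound
\[
\|X_s - Y_s\|_2^2 \leq e^{2 C_c (s-t)} \mathbb{W}_2^2(\mu,\nu), \qquad s \in [t,T].
\]
Applying the joint Lipschitz property of $\partial_x f$ and $\partial_x g$ with constant $L$, using Jensen and Cauchy--Schwarz in $(s,\omega)$, and integrating over $x \in \mathbb{S}$, I would arrive at
\[
\|\mathcal{U}(t,\cdot,\mu) - \mathcal{U}(t,\cdot,\nu)\|_2^2 \leq K L^2 (1+T)^2 e^{2 C_c T}\, \mathbb{W}_2^2(\mu,\nu)
\]
for a universal constant $K$. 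This already shows that $\mathcal{U}$ is Lipschitz (with some constant depending on $\mathcal{V}$), hence belongs to $\mathcal{C}$. To close the sharper bootstrap, I would set $C_c := 2L\sqrt{K}$ and then choose $c>0$ small enough that $(1+c)^2 e^{2C_c c} \leq 4$, which guarantees the right-hand side is at most $C_c^2 \mathbb{W}_2^2(\mu,\nu)$ for every $T \leq c$.

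The hard part is precisely this last calibration: the constant $C_c$ appears both in the Gronwall exponent (through the assumption on $\mathcal{V}$) and as the target Lipschitz constant for $\Phi(\mathcal{V})$, so it cannot be made arbitrarily large without degrading the exponential factor $e^{2C_c T}$. The smallness of $T$ is precisely what absorbs this exponential and allows the bootstrap to close at a fixed value of $C_c$ depending only on the Lipschitz constant $L$ of $\partial_x f, \partial_x g$ and the universal constant $K$.
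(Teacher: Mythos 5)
Your proposal is correct and follows essentially the same route as the paper: well-definedness and boundedness from the measurability remark and the bounds on $\partial_x f,\partial_x g$, continuity from the gradient estimate of Theorem \ref{thm:main:existence:uniqueness}, membership in $U^2({\mathbb S})$ by composition with the monotone derivatives, and the Lipschitz bootstrap via the $L^2$-stability estimate \eqref{eq:uniqueness:1} combined with the quantile identification $\|X_t-X_t'\|_2={\mathbb W}_2(\mu,\nu)$ and a smallness condition on $T$. The only cosmetic difference is that you close the stability estimate with Gronwall (factor $e^{2C_cT}$) and calibrate $C_c$ independently of $c$, whereas the paper absorbs the supremum directly under $TC_{\mathcal V}\le 1/4$ and takes $C_c=\sqrt{2C_{f,g}(1+T)}$; both calibrations close the same bootstrap.
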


\begin{proof}
The fact that $\Phi({\mathcal V})$ is well-defined follows from 
Remark 
\ref{rem:measurability} (which implies joint measurability of the 
function 
$(s,\omega,x) \mapsto X_s(x)$). 
The same remark guarantees that, 
for any initial condition $(t,\mu)
\in 
[0,T) \times {\mathcal P}_2({\mathbb R})$, 
for any $s \in (t,T]$ and any bounded and continuous function $\varphi : {\mathbb R} \rightarrow 
{\mathbb R}$, 
the function  $x \mapsto {\mathbb E}[ \varphi(X_s(x))]$ is continuous. 
 We here apply this observation with $\varphi = \partial_x g$ and $\varphi = \partial_x f$ respectively.
 We deduce  that $\Phi({\mathcal V})(t,\cdot,\mu)$ is also continuous (when $t<T$). 
 Since
$\partial_x f$ and   
$\partial_x g$ are non-decreasing and each $X_s$ has values in $U^2({\mathbb S})$, we  also deduce, by composition,
that  $\Phi({\mathcal V})(t,\cdot,\mu)$ 
 belongs to 
$U^2({\mathbb S})$. At time $t=T$, 
$\Phi({\mathcal V})(T,x,\mu)=\partial_x g(F_\mu^{-1}(x),\mu)$ and satisfies the same 
prescription of one-sided continuity as in 
Remark 
\ref{rem:measurability}.

Item $(3)$ in Definition \ref{def:mathcalC} may be seen as a consequence of the proof of the second part of the statement, 
which may be proved as follows. 
For $(t,\mu,\nu) \in [0,T] \times {\mathcal P}_2({\mathbb R}) \times {\mathcal P}_2({\mathbb R})$, 
\begin{equation*} 
\begin{split}
&\int_{{\mathbb S}} \vert {\mathcal U}(t,x,\mu) - {\mathcal U}(t,x,\nu) \vert^2 \ud x 
\\
&\leq 
{\mathbb E} \int_{{\mathbb S}}
\bigl\vert \partial_x g\bigl(X_T(x),{\rm Leb}_{\mathbb S} \circ X_T^{-1}\bigr) - 
\partial_x g\bigl(X_T'(x),{\rm Leb}_{\mathbb S} \circ (X_T')^{-1}\bigr)
\bigr\vert^2 
\ud x
\\
&\hspace{15pt}
+
{\mathbb E} 
\int_t^T 
\int_{{\mathbb S}}
\bigl\vert \partial_x f\bigl(X_s(x),{\rm Leb}_{\mathbb S} \circ X_s^{-1}\bigr) - 
\partial_x f\bigl(X_s'(x),{\rm Leb}_{\mathbb S} \circ (X_s')^{-1}\bigr)
\bigr\vert^2 
\ud x.
\end{split} 
\end{equation*} 
where $(X_s)_{t \leq s \leq T}$ stands for the solution initialized from $(t,\mu)$
and $(X_s')_{t \leq s \leq T}$ for the solution initialized from $(t,\nu)$. 
By the Lipschitz properties of $\partial_x f$ and  $\partial_x g$ (in both variables), we obtain 
\begin{equation*} 
\begin{split}
\int_{{\mathbb S}} \vert {\mathcal U}(t,x,\mu) - {\mathcal U}(t,x,\nu) \vert^2 \ud x 
&\leq 
C_g {\mathbb E} \int_{{\mathbb S}}
\bigl\vert
X_T(x) - X_T'(x) \bigr\vert^2 
\ud x 
\\
&\hspace{15pt} + C_f  {\mathbb E} \int_t^T \int_{{\mathbb S}}
\bigl\vert
X_s(x) - X_s'(x) \bigr\vert^2 
\ud x \, \ud s
\\
&\leq C_{f,g} \bigl( 1 + T \bigr) \sup_{t \leq s \leq T}{\mathbb E}\bigl[ \| X_s - X_s' \|_2^2 \bigr],
\end{split} 
\end{equation*}
for a constant $C_{f,g}$ only depending on $f$ and $g$. 

Next, by proceeding as in 
\eqref{eq:uniqueness:1}, 
we obtain (for $C_{\mathcal V}$ denoting the square Lipschitz constant of ${\mathcal V}$) 
\begin{equation*} 
\sup_{t \leq s \leq T} 
\| X_s - X_s' \|_2^2 
\leq 
\| X_t - X_t' \|_2^2 +  2 C_{\mathcal V} T  \sup_{t \leq s \leq T} 
\| X_s - X_s' \|_2^2, 
\end{equation*} 
and then, choosing $T C_{\mathcal V} \leq 1/4$, we obtain 
$
\sup_{t \leq s \leq T}
\| X_s - X_s' \|_2^2  \leq 
2 \| X_t - X_t' \|_2^2$, 
so that 
\begin{equation*} 
\begin{split}
&\int_{{\mathbb S}} \vert {\mathcal U}(t,x,\mu) - {\mathcal U}(t,x,\nu) \vert^2 \ud x 
\leq 
2 C_{f,g} \bigl( 1+ T\bigr) 
\| X_t - X_t' \|_2^2.
\end{split} 
\end{equation*}
Since $X_t$ and $X_t'$ are quantile functions, we have 
$\| X_t - X_t' \|_2^2={\mathbb W}_2^2(\mu,\nu)$, see \cite[Subsection 2.2]{delarueHammersley2022rshe}. 
Therefore, for $T C_{\mathcal V} \leq 1/4$,
\begin{equation*} 
\begin{split}
&\int_{{\mathbb S}} \vert {\mathcal U}(t,x,\mu) - {\mathcal U}(t,x,\nu) \vert^2 \ud x 
\leq 
2  C_{f,g} \bigl( 1+ T\bigr)  {\mathbb W}_2^2(\mu,\nu).
\end{split} 
\end{equation*}
Assuming $C_{\mathcal V} \leq 2 C_{f,g} (1+T) $ and then $2 T C_{f,g}(1+T) \leq 1/4$, we complete the proof
with $c:= 1/(8(C_{f,g}(1+T)))$
and $C_c:=  \sqrt{2 C_{f,g} (1+T)}$.
\end{proof}
In the sequel, we denote by ${\mathcal C}_c$ the subclass of ${\mathcal C}$ containing fields ${\mathcal V}$ which are 
$C_c$-Lipschitz continuous in $\mu$ (when regarded as being $L^2({\mathbb S})$-valued). 
It now remains to prove that 
\begin{prop}
\label{prop:2:2}
With the same notation as in the statement of Lemma  \ref{lem:U}, the function 
$\Phi$ is a (strict) contraction from ${\mathcal C}_c$ into itself when $T \leq \min(c, \ln(2)/(1+2C_c^2))$ and
 ${\mathcal C}_c$ is equipped with the distance 
 \begin{equation*}
d({\mathcal V},{\mathcal V}') := 
\sup_{0 \le s \le T} \sup_{\mu \in {\mathcal P}_2({\mathbb R})}
\| 
{\mathcal V}(s,\cdot,\mu) 
- 
{\mathcal V}'(s,\cdot,\mu) \|_2.
\end{equation*}
In particular, the mapping $\Phi$ has a fixed point in small time. 
\end{prop}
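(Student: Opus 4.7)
The plan is to show that for any two fields ${\mathcal V},{\mathcal V}' \in {\mathcal C}_c$, and any fixed initial condition $(t,\mu) \in [0,T) \times {\mathcal P}_2({\mathbb R})$, the $L^2$-distance between ${\mathcal U}(t,\cdot,\mu) = \Phi({\mathcal V})(t,\cdot,\mu)$ and ${\mathcal U}'(t,\cdot,\mu) = \Phi({\mathcal V}')(t,\cdot,\mu)$ is strictly bounded by $d({\mathcal V},{\mathcal V}')$. To this end, I would let $(X_s)_{t \leq s \leq T}$ and $(X_s')_{t \leq s \leq T}$ denote the solutions of \eqref{eq:V} driven respectively by ${\mathcal V}$ and ${\mathcal V}'$, with the common deterministic initial condition $X_t = X_t' = F_\mu^{-1}$.

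First, mimicking the beginning of the proof of Lemma \ref{lem:U}, the Lipschitz properties of $\partial_x f$ and $\partial_x g$ in both arguments (combined with the isometry $\|X_s - X_s'\|_2 = {\mathbb W}_2({\rm Leb}_{\mathbb S} \circ X_s^{-1}, {\rm Leb}_{\mathbb S} \circ (X_s')^{-1})$) give a bound of the form
\begin{equation*}
\| {\mathcal U}(t,\cdot,\mu) - {\mathcal U}'(t,\cdot,\mu) \|_2^2 \leq C_{f,g} (1+T) \sup_{t \leq s \leq T} {\mathbb E} \bigl[ \| X_s - X_s' \|_2^2 \bigr].
\end{equation*}
The task thus reduces to controlling the supremum on the right in terms of $d({\mathcal V},{\mathcal V}')$.

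Second, I would adapt the mollification argument used in the sketch of Theorem \ref{thm:main:existence:uniqueness}: regularising $X_s - X_s'$ by $e^{\varepsilon \Delta}$, passing to the limit $\varepsilon \searrow 0$ and using items $(2)$ and $(4)$ of Definition \ref{def:existence} to discard the non-positive reflection contributions, one obtains, after dropping the non-positive gradient term,
\begin{equation*}
\ud_s \| X_s - X_s' \|_2^2 \leq -2 \bigl\langle X_s - X_s', {\mathcal V}(s,\cdot,{\rm Leb}_{\mathbb S} \circ X_s^{-1}) - {\mathcal V}'(s,\cdot,{\rm Leb}_{\mathbb S} \circ (X_s')^{-1}) \bigr\rangle_2 \ud s.
\end{equation*}
The key trick is to split the inner drift difference as
$[{\mathcal V}(s,\cdot,{\rm Leb}_{\mathbb S} \circ X_s^{-1}) - {\mathcal V}'(s,\cdot,{\rm Leb}_{\mathbb S} \circ X_s^{-1})] + [{\mathcal V}'(s,\cdot,{\rm Leb}_{\mathbb S} \circ X_s^{-1}) - {\mathcal V}'(s,\cdot,{\rm Leb}_{\mathbb S} \circ (X_s')^{-1})]$, whose $L^2$-norms are bounded respectively by $d({\mathcal V},{\mathcal V}')$ (by definition of the metric) and by $C_c \| X_s - X_s' \|_2$ (since ${\mathcal V}' \in {\mathcal C}_c$ and $\mathbb{W}_2$ matches the $L^2$-distance of quantile functions). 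Applying $2ab \leq a^2 + b^2$ together with $(a+b)^2 \leq 2a^2 + 2b^2$ then yields
\begin{equation*}
\ud_s \| X_s - X_s' \|_2^2 \leq \bigl[ (1+2 C_c^2) \| X_s - X_s' \|_2^2 + 2 d({\mathcal V},{\mathcal V}')^2 \bigr] \ud s.
\end{equation*}

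Third, Gronwall's lemma (with $X_t = X_t'$) gives $\| X_s - X_s' \|_2^2 \leq 2 d({\mathcal V},{\mathcal V}')^2 (e^{(1+2C_c^2)(s-t)} - 1)/(1+2C_c^2)$, and the assumption $T \leq \ln(2)/(1+2C_c^2)$ forces $e^{(1+2C_c^2)(s-t)} - 1 \leq 1$ on $[t,T]$, so $\sup_{t \leq s \leq T} \| X_s - X_s' \|_2^2 \leq 2 d({\mathcal V},{\mathcal V}')^2 /(1+2 C_c^2)$. Inserting this into the first step and recalling from the proof of Lemma \ref{lem:U} that $C_c^2 = 2 C_{f,g}(1+T)$, we obtain $\| {\mathcal U}(t,\cdot,\mu) - {\mathcal U}'(t,\cdot,\mu) \|_2^2 \leq C_c^2/(1+2C_c^2) \cdot d({\mathcal V},{\mathcal V}')^2 \leq \tfrac{1}{2} d({\mathcal V},{\mathcal V}')^2$, which is the required strict contraction uniformly in $(t,\mu)$. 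Combined with the self-map property established in Lemma \ref{lem:U}, Banach's fixed point theorem then delivers a unique fixed point in $({\mathcal C}_c, d)$.

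I expect the main technical nuisance to be the rigorous justification of the Itô-type differential identity for $\| X_s - X_s' \|_2^2$ via the $e^{\varepsilon \Delta}$ mollification, since this is where the reflection terms need to be properly handled as in \cite{delarueHammersley2022rshe}; everything else (the splitting trick, Gronwall, and the final contraction estimate) is elementary once the book-keeping between $C_c$ and $C_{f,g}$ is kept straight.
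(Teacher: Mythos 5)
Your proposal is correct and follows essentially the same route as the paper: reduce to $\sup_s {\mathbb E}\|X_s-X_s'\|_2^2$ via the Lipschitz bounds on $\partial_x f,\partial_x g$ as in Lemma \ref{lem:U}, split the drift difference into a $d({\mathcal V},{\mathcal V}')$-term plus a $C_c\|X_s-X_s'\|_2$-term using the quantile isometry, and close with Gronwall under $T\le \ln 2/(1+2C_c^2)$, with $T\le c$ ensuring the self-map property. The only difference is bookkeeping (you integrate the Gronwall bound directly and invoke $C_c^2=2C_{f,g}(1+T)$, whereas the paper runs an exponentially weighted Gronwall and uses $T\le c$ in the contraction constant), and both yield a contraction factor at most $1/\sqrt{2}$.
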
 

\begin{proof}
We proceed as in the proof of Lemma 
\ref{lem:U}. 
For two fields ${\mathcal V}$ and ${\mathcal V}'$ in  ${\mathcal C}_c$, we denote by ${\mathcal U}$ and ${\mathcal U}'$ the respective images by 
$\Phi$. For a given $(t,\mu) \in [0,T] \times {\mathcal P}({\mathbb R})$, we call $(X_s)_{t \leq s \leq T}$ and 
$(X_s')_{t \leq s \leq T}$ the two solutions to \eqref{eq:V}, driven by ${\mathcal V}$ and ${\mathcal V}'$ respectively, with 
$(t,\mu)$ as initial condition.
Following the proof of Lemma \ref{lem:U}, we have 
\begin{equation*} 
\begin{split}
\int_{\mathbb S}\vert {\mathcal U}(t,x,\mu) - {\mathcal U}'(t,x,\mu) \vert^2 \ud x 
&\leq 
C_{f,g} \bigl( 1 + T \bigr) \sup_{t \leq s \leq T}{\mathbb E}\bigl[ \| X_s - X_s' \|_2^2 \bigr].
\end{split}
\end{equation*}
By 
\eqref{eq:uniqueness:1}, 
for any $s \in [t,T]$, 
\begin{equation*} 
\begin{split}
e^{-s} \| X_s - X_s' \|_2^2 
&\leq  
\int_t^s 
e^{-r}
\bigl\| 
{\mathcal V}\bigl(r,\cdot,{\rm Leb}_{\mathbb S} \circ X_r^{-1} \bigr) 
- 
{\mathcal V}'\bigl(r,\cdot,{\rm Leb}_{\mathbb S} \circ (X_r')^{-1} \bigr) 
\bigr\|_2^2 \ud r
\\
&\leq  
2\int_t^s 
e^{-r}
\bigl\| 
{\mathcal V}\bigl(r,\cdot,{\rm Leb}_{\mathbb S} \circ X_r^{-1} \bigr) 
- 
{\mathcal V}'\bigl(r,\cdot,{\rm Leb}_{\mathbb S} \circ X_r^{-1} \bigr) 
\bigr\|_2^2 \ud r
\\
&\hspace{15pt} + 
2C_c^2 \int_t^T 
e^{-r}
 \|X_r - X_r' 
 \|_2^2 \ud r
 \\
 &\leq 2e^{2 C_c^2 T} \int_t^T 
e^{-r}
\bigl\| 
{\mathcal V}\bigl(r,\cdot,{\rm Leb}_{\mathbb S} \circ X_r^{-1} \bigr) 
- 
{\mathcal V}'\bigl(r,\cdot,{\rm Leb}_{\mathbb S} \circ X_r^{-1} \bigr) 
\bigr\|_2^2 \ud r
\end{split}
\end{equation*}
with the last line following from Gronwall's lemma. 
And then,
\begin{equation*}
\begin{split}
&\int_{\mathbb S}\vert {\mathcal U}(t,x,\mu) - {\mathcal U}'(t,x,\mu) \vert^2 \ud x 
\\
&\leq   2C_{f,g} \bigl( 1 + T \bigr)  e^{(1+2C_c^2) T} {\mathbb E} \int_t^T 
\bigl\| 
{\mathcal V}\bigl(s,\cdot,{\rm Leb}_{\mathbb S} \circ X_s^{-1} \bigr) 
- 
{\mathcal V}'\bigl(s,\cdot,{\rm Leb}_{\mathbb S} \circ X_s^{-1} \bigr) 
\bigr\|_2^2 \ud s.
\\
&\leq \frac{T}{4c} e^{(1+2C_c^2) T} 
\sup_{0 \le s \le T}
\sup_{\nu \in {\mathcal P}_2({\mathbb R})}
\| 
{\mathcal V}(s,\cdot,\nu) 
- 
{\mathcal V}'(s,\cdot,\nu) \|_2^2,
\end{split} 
\end{equation*} 
where we used the fact that 
$c= 1/(8(C_{f,g}(1+T)))$. 

By assumption, $T \leq \min(c, \ln(2)/(1+2C_c^2))$ and then $T e^{(1+2C_c^2)T} \leq 2c$. We can easily complete the proof. 
\end{proof}

\subsection{Martingale representation} 

We now provide further properties of the mapping $\Phi$. 

\begin{lem}
\label{lem:martingale}
Let ${\mathcal V}$ be in ${\mathcal C}$ and ${\mathcal U}:=\Phi({\mathcal V})$. For a given initial condition $(t,\mu) \in [0,T] \times {\mathcal P}_2({\mathbb R})$ to \eqref{eq:V}, call 
$(X_s)_{t \leq s \leq T}$ the corresponding solution
(driven by ${\mathcal V}$). Then, 
for any $s \in [t,T]$, 
with probability 1 under ${\mathbb P}$, 
for any $x \in {\mathbb S}$, 
\begin{equation*} 
{\mathcal U} \bigl(s,x,{\rm Leb}_{\mathbb S} \circ X_s^{-1} 
\bigr)
=
{\mathbb E} \biggl[ \partial_x g \bigl( X_T(x),{\rm Leb}_{\mathbb S} \circ X_T^{-1} 
\bigr)
+
\int_s^T 
 \partial_x f \bigl( X_r(x),{\rm Leb}_{\mathbb S} \circ X_r^{-1} 
\bigr)
\ud r
 \, \vert \, {\mathcal F}_s \biggr]. 
\end{equation*}
\end{lem}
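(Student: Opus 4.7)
The plan is to read the identity as the strong Markov property for the reflected SPDE \eqref{eq:V}: restarting the dynamics at time $s$ from the random quantile function $X_s$ should reproduce, after taking expectation, the very definition of ${\mathcal U}(s,x,\cdot) = \Phi({\mathcal V})(s,x,\cdot)$ at the measure ${\rm Leb}_{\mathbb S} \circ X_s^{-1}$.

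To implement this, I would first introduce the shifted noise $\tilde W_u := W_{s+u} - W_s$ for $u \in [0,T-s]$, which is a $Q$-Brownian motion independent of ${\mathcal F}_s$. For each deterministic $\xi \in U^2({\mathbb S})$, Theorem \ref{thm:main:existence:uniqueness} provides a unique strong solution $(\bar Y_u^\xi)_{0 \leq u \leq T-s}$ of \eqref{eq:V} driven by $\tilde W$ with initial condition $\xi$; define correspondingly
\[
\psi(\xi) := {\mathbb E}\Bigl[ \partial_x g \bigl( \bar Y_{T-s}^\xi(x), {\rm Leb}_{\mathbb S} \circ (\bar Y_{T-s}^\xi)^{-1} \bigr) + \int_0^{T-s} \partial_x f \bigl( \bar Y_u^\xi(x), {\rm Leb}_{\mathbb S} \circ (\bar Y_u^\xi)^{-1} \bigr) \ud u \Bigr].
\]
By the very definition of $\Phi$, one has $\psi(\xi) = {\mathcal U}(s, x, {\rm Leb}_{\mathbb S} \circ \xi^{-1})$. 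The pathwise key step is then the flow identification $\bar Y_{r-s}^{X_s(\omega)} = X_r(\omega)$ for $s \leq r \leq T$ almost surely, which follows from pathwise uniqueness in Theorem \ref{thm:main:existence:uniqueness}, applied to the equation started at time $s$ from the ${\mathcal F}_s$-measurable initial condition $X_s$ and driven by $(W_r - W_s)_{s \leq r \leq T} = \tilde W$. Since $X_s$ is ${\mathcal F}_s$-measurable and $\tilde W$ is independent of ${\mathcal F}_s$, a standard freezing argument gives the conditional expectation of any bounded measurable functional of the path $(X_r)_{s \leq r \leq T}$ as the unconditional expectation of the same functional applied to $(\bar Y_{r-s}^\xi)_{s \leq r \leq T}$, evaluated at $\xi = X_s$. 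Specializing the functional to the integrand of the right-hand side of the claimed identity (whose joint measurability in path, space and $\omega$ is ensured by Remark \ref{rem:measurability} and the assumptions on $f,g$) delivers the identity for each fixed $x$, almost surely.

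It then remains to upgrade the pointwise-in-$x$ statement to one holding for all $x \in {\mathbb S}$ outside a single null set. For the left-hand side, Lemma \ref{lem:U} gives continuity of ${\mathcal U}(s, \cdot, \mu)$ in $x$ when $s < T$, while at $s = T$ the value equals $\partial_x g(F_\mu^{-1}(x),\mu)$, which is one-sided continuous as in Remark \ref{rem:measurability}. For the right-hand side, the bound $\| \nabla X_r \|_2 < \infty$ from Theorem \ref{thm:main:existence:uniqueness} yields continuity of $x \mapsto X_r(x)$ for each $r > t$ almost surely, and boundedness of $\partial_x f, \partial_x g$ together with dominated convergence selects a jointly measurable, continuous-in-$x$ version of the conditional expectation. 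Matching the two continuous-in-$x$ versions on a countable dense subset of ${\mathbb S}$ extends the identity uniformly in $x$.

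The main obstacle is the rigorous justification of the freezing/Markov identity for this reflected SPDE: the reflection term $\eta$ together with the infinite-dimensional cone state space $U^2({\mathbb S})$ prevents a direct appeal to a classical Markov property for Feller or cylindrical-noise SPDEs. I would circumvent this by relying only on pathwise uniqueness from Theorem \ref{thm:main:existence:uniqueness} combined with the independence of $\tilde W$ from ${\mathcal F}_s$, in the spirit of a Yamada--Watanabe conditioning argument, rather than trying to set up a full Markovian framework on $U^2({\mathbb S})$.
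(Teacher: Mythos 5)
Your proposal is correct and follows essentially the same route as the paper: both proofs reduce the identity to a Markov-type restart of \eqref{eq:V} at time $s$, justified by pathwise uniqueness from Theorem \ref{thm:main:existence:uniqueness}, so that the conditional expectation given ${\mathcal F}_s$ is recognized as the very definition of $\Phi({\mathcal V})(s,x,\cdot)$ evaluated at ${\rm Leb}_{\mathbb S}\circ X_s^{-1}$. The only difference is technical packaging — you condition via the independence of the shifted noise $(W_{s+u}-W_s)_u$ and a freezing/Yamada--Watanabe argument, while the paper works on the canonical space with a regular conditional probability given $\sigma(W_r;r\le s)$ and uniqueness in law — which is not a substantively different argument.
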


\begin{proof}
The proof relies on a variation of the Markov property for \eqref{eq:V}.
Without any loss of generality, we can assume that we are working on the canonical space $\Omega={\mathscr C}([0,T],L^2({\mathbb S}))$ (space of continuous functions from $[0,T]$ to 
$L^2({\mathbb S})$) equipped with the 
$Q$-Brownian motion law. Then, we can consider 
a regular conditional probability $({\mathbb P}_{s,\omega})_{\omega \in \Omega}$ of ${\mathbb P}$ given the $\sigma$-field generated by 
$\sigma(W_r(\cdot) ; r \leq s)$ (with $W$ being understood as the canonical process). 

Then, $\omega$ ${\mathbb P}$-almost surely, under ${\mathbb P}_{s,\omega}$, $(X_r)_{s \leq r \leq T}$ can be regarded as the unique solution of \eqref{eq:V} initialized from 
$(s,X_s(\omega))$. By uniqueness in law of the solution (which follows from pathwise uniqueness), it must hold, for any $x \in {\mathbb S}$: 
\begin{equation*} 
\begin{split}
{\mathcal U} \bigl(s,x,{\rm Leb}_{\mathbb S} \circ (X_s(\omega))^{-1} 
\bigr) &= 
\int_{\Omega} 
\partial_x g \bigl( 
X_T(\omega')(x),
{\rm Leb}_{\mathbb S} \circ (X_T(\omega'))^{-1} 
\bigr)
\ud{{\mathbb P}_{s,\omega}}(\omega')
\\
&+ 
\int_{\Omega}  
\biggl[ \int_s^T 
\partial_x f \bigl( 
X_r(\omega')(x),
{\rm Leb}_{\mathbb S} \circ (X_r(\omega'))^{-1} 
\bigr)
\ud r
\biggr]
\ud {{\mathbb P}_{s,\omega}}(\omega') ,
\end{split}
\end{equation*}
which is exactly the claim. 
\end{proof}
We notice by martingale representation theorem in infinite dimension (see \cite[Proposition 4.1]{FuhrmanTessitore}) that, within the framework of Lemma
\ref{lem:martingale}, there exists a collection of progressively-measurable processes
$(Z^{t,\mu,k}(x))_{k \geq 0,x\in {\mathbb S}}$ (with values in ${\mathbb R}$) such that 
\begin{equation}
\label{eq:representation} 
\begin{split}
&\partial_x g \bigl( X_T(x),{\rm Leb}_{\mathbb S} \circ X_T^{-1} \bigr)
+ \int_t^T
\partial_x f \bigl( X_s(x),{\rm Leb}_{\mathbb S} \circ X_s^{-1} \bigr)
\ud s
\\
&= 
{\mathcal U}(t,x,\mu) + \sum_{k \in {\mathbb N}} \int_t^T Z^{t,\mu,k}_s(x) \ud B_s^k, \quad x \in {\mathbb S}, 
\end{split}
\end{equation} 
with 
\begin{equation*} 
\sum_{k \in {\mathbb N}} {\mathbb E} \int_t^T \vert Z^{t,\mu,k}_s(x) \vert^2 \ud s < \infty. 
\end{equation*} 
By boundedness and $\omega$-wise continuity in $x$ of the left-hand side 
(see Remark 
\ref{rem:measurability}) and of 
the first term in the right-hand side, we notice that 
\begin{equation*} 
\forall x \in {\mathbb S}, \quad \lim_{x' \rightarrow x} 
{\mathbb E} 
\sum_{k \in {\mathbb N}}\int_t^T \vert Z^{t,\mu,k}_s(x) - Z^{t,\mu,k}_s(x') \vert^2 \ud s = 0,
\end{equation*} 
from which we can easily find a version of each $Z^{t,\mu,k}(x)$ such that the
field $(s,x) \mapsto Z_s^{t,\mu,k}(x)$ is ${\mathbb F}$-progressively measurable for 
each $k$ (i.e., the mapping 
$(s,x,\omega) \in [0,t] \times {\mathbb S} \times \Omega \mapsto Z_s^{t,\mu,k}(x,\omega)$ is 
${\mathcal B}([0,t]) \otimes {\mathcal B}({\mathbb S}) \otimes {\mathcal F}_t/{\mathcal B}({\mathbb R})$
measurable for any $t \in [0,T]$). Details are left to the reader because a stronger version of this result is given in Lemma 
\ref{lem:2:5} below.

The following statement is indeed a standard but crucial result in BSDE theory. 
It clarifies the structure of the martingale representation term $(Z_s^{t,\mu,k}(x))_{t \leq s \leq T}$ in 
\eqref{eq:representation}. 
Intuitively, it is a way to access the derivatives of ${\mathcal U}$ with respect to the measure argument, but 
without any preliminary study of the differentiability properties of ${\mathcal U}$.  

\begin{lem}
\label{lem:2:5}
With the same notation as in 
\eqref{eq:representation}, 
there exists a collection of measurable functions 
\begin{equation*} 
\psi^k : [0,T] \times {\mathbb R} \times {\mathcal P}_2({\mathbb R}) \rightarrow {\mathbb R}, \quad k \in {\mathbb N},
\end{equation*}
such that, for any initial condition $(t,\mu) \in [0,T] \times {\mathcal P}_2({\mathbb R})$, 
the process 
$(Z^{t,\mu,k}(x))_{k \geq 0,x\in {\mathbb S}}$ in 
\eqref{eq:representation} 
satisfies 
\begin{equation*} 
 \int_t^T 
 {\mathbb P}
 \Bigl( \bigl\{ Z_s^{t,\mu,k}(x) \not =  \psi^k\bigl(s,x, {\rm Leb}_{\mathbb S} \circ X_s^{-1} 
 \bigr) \bigr\} 
 \Bigr) \ud s = 0. 
\end{equation*} 
\end{lem}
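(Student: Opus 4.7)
\emph{Markov identification of $Z$.} The strategy combines the Markov property of \eqref{eq:V}, already exploited in the proof of Lemma \ref{lem:martingale}, with the uniqueness of the Brownian martingale representation. Fix $(t, \mu) \in [0,T] \times \mathcal{P}_2(\mathbb{R})$ and $s \in [t, T)$. Disintegrate $\mathbb{P}$ along $\mathcal{F}_s$ via a regular conditional probability $(\mathbb{P}_{s,\omega})_{\omega}$ exactly as in the proof of Lemma \ref{lem:martingale}; pathwise uniqueness for \eqref{eq:V} implies that, $\mathbb{P}$-a.s.~in $\omega$, the restriction of $(X_r)_{s \leq r \leq T}$ under $\mathbb{P}_{s,\omega}$ has the same law as the solution $(X_r^{s,\nu})_{s \leq r \leq T}$ of \eqref{eq:V} started from $(s,\nu)$ with $\nu := \mathrm{Leb}_{\mathbb S} \circ X_s(\omega)^{-1}$. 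Writing \eqref{eq:representation} at both initializations, restricted to $[s,T]$, yields two Brownian stochastic integrals representing the same $\mathcal{F}_T$-measurable random variable against the common family $(B^k)_k$. Identifying the quadratic covariations of these two martingales with each $B^k$ then gives, $\mathbb{P}$-a.s.,
\begin{equation*}
Z^{t,\mu,k}_r(x) = Z^{s, \mathrm{Leb}_{\mathbb S} \circ X_s^{-1}, k}_r(x), \quad \text{a.e.\ } r \in [s,T], \; k \in {\mathbb N}, \; x \in {\mathbb S}.
\end{equation*}
In words, $Z^{t,\mu,k}_r$ depends on the pair $(t,\mu)$ only through the statistical state $\mathrm{Leb}_{\mathbb S} \circ X_s^{-1}$ at any earlier time $s$.

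\emph{Extraction of $\psi^k$.} For $(s,\nu,x,k) \in [0,T] \times \mathcal{P}_2(\mathbb{R}) \times {\mathbb R} \times {\mathbb N}$, introduce the $\mathcal{F}_T$-measurable random variable
\begin{equation*}
G^{s,\nu}_x := \partial_x g\bigl(X^{s,\nu}_T(x), \mathrm{Leb}_{\mathbb S} \circ (X^{s,\nu}_T)^{-1}\bigr) + \int_s^T \partial_x f\bigl(X^{s,\nu}_u(x), \mathrm{Leb}_{\mathbb S} \circ (X^{s,\nu}_u)^{-1}\bigr) \ud u,
\end{equation*}
and define
\begin{equation*}
\psi^k(s, x, \nu) := \limsup_{h \searrow 0} \frac{1}{h} \, \mathbb{E}\bigl[(B_{s+h}^k - B_s^k) \, G^{s,\nu}_x\bigr].
\end{equation*}
By the It\^o isometry applied to the representation \eqref{eq:representation} of $G^{s,\nu}_x$ at initialization $(s,\nu)$, the right-hand side equals $\limsup_{h \searrow 0} \tfrac{1}{h} \mathbb{E}[\int_s^{s+h} Z^{s,\nu,k}_r(x) \, \ud r]$, which is the Lebesgue density at $r=s$ of the absolutely continuous function $r \mapsto \mathbb{E}[\int_s^{r} Z^{s,\nu,k}_u(x) \, \ud u]$; in particular the $\limsup$ is a genuine limit for almost every $s$. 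Joint measurability of $\psi^k$ in $(s,x,\nu)$ follows from the $L^2({\mathbb S})$-Lipschitz stability of $X^{s,\nu}$ in $\nu$ (derived exactly as in the proof of Theorem \ref{thm:main:existence:uniqueness}), the joint measurability of the canonical version of $X^{s,\nu}_r(x)$ in $(r,x,\omega)$ from Remark \ref{rem:measurability}, and the $L^2$-stability of the Brownian martingale representation under $L^2$-convergence of the terminal data; alternatively, $\psi^k$ can first be defined on a countable dense collection of initializations $\nu \in U^2({\mathbb S}) \simeq \mathcal{P}_2(\mathbb{R})$ and extended by density.

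\emph{Conclusion and main difficulty.} Combining the Markov identification with the Lebesgue-differentiation definition of $\psi^k$ gives the desired identity $Z^{t,\mu,k}_s(x) = \psi^k(s, x, \mathrm{Leb}_{\mathbb S} \circ X_s^{-1})$ for almost every $s \in [t,T]$, $\mathbb{P}$-almost surely. The main technical difficulty is to produce a genuinely jointly measurable $\psi^k$ with respect to the infinite-dimensional variable $\nu \in \mathcal{P}_2(\mathbb{R})$ and to commute the Lebesgue-differentiation step with the Markov identification of the first paragraph; both issues are handled by working in the Polish space $U^2({\mathbb S})$ and exploiting the Lipschitz stability already developed for Theorem \ref{thm:main:existence:uniqueness}.
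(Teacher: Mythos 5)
Your overall strategy (Markov/flow identification of the integrands, then a differentiation at the initial time to read off a deterministic function of the current state) is the right idea, and the first step is essentially sound, but the extraction of $\psi^k$ contains a genuine gap. Your $\psi^k(s,x,\nu)$ is $\limsup_{h}\frac1h\int_s^{s+h}\mathbb{E}\bigl[Z^{s,\nu,k}_r(x)\bigr]\ud r$, i.e.\ a differentiation of the map $r\mapsto \mathbb{E}[Z^{s,\nu,k}_r(x)]$ \emph{at the left endpoint} $r=s$ of the interval on which the re-initialized solution lives. The Lebesgue differentiation theorem gives the existence of the density at a.e.\ \emph{interior} point of a \emph{fixed} integrable function; here, as $s$ varies the function itself changes (different initialization), so the claim that ``the limsup is a genuine limit for almost every $s$'' has no justification. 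More importantly, the final identification you invoke amounts to
\begin{equation*}
Z^{t,\mu,k}_s(x)\;=\;\limsup_{h\searrow 0}\ \frac1h\,\mathbb{E}\Bigl[\int_s^{s+h} Z^{t,\mu,k}_r(x)\,\ud r\,\Big|\,\mathcal{F}_s\Bigr]
\quad\text{for a.e.\ } s,\ \mathbb{P}\text{-a.s.},
\end{equation*}
a conditional Lebesgue differentiation at a moving endpoint. What comes for free (via $L^1(\Omega)$-valued Lebesgue points of $r\mapsto Z_r$ and the $L^1$-contractivity of conditional expectation) is convergence in $L^1(\mathbb{P})$, hence in measure, for a.e.\ $s$; this does not control a limsup, and upgrading to a.e.\ convergence would require a subsequence of $h$'s that a priori depends on $(t,\mu,x,k)$, destroying the universality of $\psi^k$. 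Your fallback (define $\psi^k$ on a countable dense set of $\nu$'s and extend by density) does not repair this, since the problematic exchange of limit and conditioning is in the time variable, not in $\nu$.

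This is exactly the difficulty the paper's proof is built to avoid: it approximates the covariation of $Y(x)$ with $B^k$ by discrete sums $C^{k,n}_{s_1,s_2}(x)$, chooses (following Lemma 3.28 in \cite{CinlarJacodProtterSharpe}) an extraction $n_p(t,x,\mu)$ that is \emph{measurable in the initial condition} so that the limit exists pathwise and is still a universal functional after composing with $(\tau_0,x,{\rm Leb}_{\mathbb S}\circ X_0^{-1})$; it then performs the Lebesgue differentiation pathwise on the absolutely continuous process $A^k$, and handles the left-endpoint value not by differentiation of an expectation but by observing that $Z^k_t(x)$ is germ-field measurable and invoking the Blumenthal $0$--$1$ law, which makes it a.s.\ constant and lets one set $\psi^k(t,x,\mu):=\mathbb{E}^{t,x,\mu}[Z^k_t(x)]$ (with the integrability truncation via $\phi^k$); the Markov property and an integrability argument then propagate the identity to a.e.\ later time, and joint measurability of $\psi^k$ is obtained by a monotone class argument on $(t,x,\mu)\mapsto\mathbb{P}^{t,x,\mu}(C)$. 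To make your argument complete you would need to supply an analogue of this mechanism (measurable choice of approximating sequence plus the $0$--$1$ law, or some other device yielding a.e.\ pointwise, not merely in-measure, identification), rather than the direct appeal to Lebesgue differentiation of conditional expectations.
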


\begin{proof}
Although the result is standard in the finite-dimensional setting, proving it in the infinite-dimensional case is more delicate. 
The main idea is borrowed from 
\cite{ImkellerReveillacRichter}
and relies on Theorem 6.27 in 
\cite{CinlarJacodProtterSharpe}. 
There is here an additional (little) difficulty, which comes from the fact that the martingale representative term $Z^{t,\mu,k}_s(x)$ depends on 
the extra variable $x \in {\mathbb S}$. For that reason, we feel more appropriate to 
give a sketch of the proof, based on the 
ideas of \cite{CinlarJacodProtterSharpe}. 
\vskip 4pt

\textit{First Step.} 
We start with several notations.
Throughout the proof, we call $D_n$ the collection of dyadic numbers $\{j/2^n, j \in {\mathbb Z}\}$. 
Moreover, for any $(t,x,\mu) \in [0,T] \times {\mathbb S} \times {\mathcal P}_2({\mathbb R})$, we call ${\mathbb P}^{t,x,\mu}$ the probability 
measure on $\Omega = {\mathscr C}([0,T],[0,T]) \times 
{\mathscr C}([0,T],U^2({\mathbb S}))
\times {\mathscr C}([0,T],L^2({\mathbb S}))$ equipped with the law of 
the concatenated paths $(t,F_\mu^{-1},0)_{0 \leq r \leq t}$ and 
$(s,X_s,W_s-W_t)_{t \leq s \leq T}$, 
where $(X_s)_{t \leq s \leq T}$ solves 
\eqref{eq:V}
with $F_{\mu}^{-1}$ as initial condition at time $t$. 

We think that there is no ambiguity in denoting the canonical process on the space $\Omega$ 
by $(\tau_s,X_s,W_s)_{0 \leq s \leq T}$. 
It is very important to notice that, although $U^2({\mathbb S})$ is equipped with an $L^2$ norm, we can 
impose 
the process
$(X_s)_{0 \leq s \leq T}$ evaluated at $x$,
namely
$(X_s(x))_{0 \leq s \leq T}$, to be jointly measurable in $s$, $x$ and $\omega$ (the latter denoting here the generic element of the canonical space). 
This follows 
from Remark 
\ref{rem:measurability}. In particular, this allows us to define the following functional: 
\begin{equation*} 
Y_s(x) = {\mathcal U}\bigl(\tau_s ,x,\textrm{\rm Leb}_{\mathbb S}  \circ X_{\tau_s}^{-1} \bigr) + \int_{\tau_0}^{s \vee \tau_0}
 \partial_x f\bigl(X_r(x),\textrm{\rm Leb}_{\mathbb S} \circ X_r^{-1}\bigr) \ud r, \quad s \in [0,T]. 
\end{equation*} 

Lastly,
with a slight abuse of notation, we do as 
in 
\eqref{eq:representation}  
and we call 
$((Z_s^{t,\mu,k}(x))_{t \leq s \leq T})_{k \in {\mathbb N}}$ 
the processes obtained by representing the left-hand side in 
\eqref{eq:representation} 
as a sum of stochastic integrals under ${\mathbb P}^{t,x,\mu}$. 
\vskip 4pt

\textit{Second Step.} 
For $x \in {\mathbb S}$, for any two integers $k,n \geq 1$ and any two reals $0 \leq s_1 < s_2 \leq T$, we let 
\begin{equation*} 
\begin{split}
&C_{s_1,s_2}^{k,n}(x) :=  \sum_{r_i \in D_n, s_1 \leq r_i < r_{i+1} <s_2}
 \bigl( Y_{r_{i+1}}(x)- Y_{r_i}(x) \bigr) 
\bigl( B^k_{r_{i+1}}- B^k_{r_i} \bigr),
\end{split}
\end{equation*} 
Following the proof of Lemma 3.28 in 
\cite{CinlarJacodProtterSharpe}, 
we can construct an increasing sequence of integers $(n_p(t,x,\mu))_{p \geq 1}$, measurable w.r.t. $(t,x,\mu)$, such that, under each 
${\mathbb P}^{t,x,\mu}$, the limit
\begin{equation*} 
\lim_{p \rightarrow \infty} C_{s_1,s_2}^{k,n_p(t,x,\mu)}(x)
=
\lim_{p \rightarrow \infty} C_{s_1,s_2}^{k,n_p(\tau_0,x,{\rm Leb}_{\mathbb S} \circ X_0^{-1})}(x), \quad 0 \leq s_1 < s_2 \leq T,
\end{equation*} 
exists (pointwise) almost surely
and then coincides with the quadratic co-variation between $(Y_s(x))_{0 \leq s \leq T}$ 
and $(B^k_s)_{0 \leq s \leq T}$.   
We then define, for $s_1,s_2 \in [0,T]$, 
\begin{equation*} 
\begin{split}
&A^k_{s_1,s_2}(x) :=  \limsup_{p \rightarrow \infty} 
C_{s_1,s_2}^{k,n_p(\tau_0,x,{\rm Leb}_{\mathbb S} \circ X_0^{-1})}(x),
\end{split}
\end{equation*}
and
\begin{equation*}
Z^{k}_{s}(x) := \liminf_{h \searrow 0, h \in {\mathbb Q}} \frac1h  A^{k}_{s,s+h}(x), 
\quad s \in [0,T], 
\end{equation*}
with the limit existing in $[-\infty,\infty]$. 
With probability 1 under ${\mathbb P}^{t,x,\mu}$, 
%
%
%
\begin{equation*} 
A^{k}_{s_1,s_2}(x)
= \int_{s_1}^{s_2} Z^{k}_r(x) \ud r, \quad t \leq s_1 < s_2 \leq T.  
\end{equation*}  
In particular,  
with probability 1 under 
${\mathbb P}^{t,x,\mu}$,
\begin{equation*} 
\int_t^T 
\vert Z_{r}^{k}(x)
- 
Z^{t,\mu,k}_r (x)
\vert \ud r
=0. 
\end{equation*} 
\vskip 4pt

\textit{Third Step.} 
We observe that $Z_{t}^{k}(x)$ is $\cap_{s >t} \sigma((\tau_r,X_r,W_r), \ 0 \le r \leq s)$--measurable.
Blumenthal 0-1 law says that, 
under ${\mathbb P}^{t,x,\mu}$, 
 $Z_{t}^{k}(x)$ is almost surely constant. And then 
 \begin{equation*} 
 {\mathbb P}^{t,x,\mu} \Bigl( \bigl\{ \vert Z_{t}^{k}(x) 
 \vert  = {\mathbb E}^{t,x,\mu} \bigl( \vert Z_{t}^{k}(x)
 \vert 
 \bigr)\bigr\} \Bigr) = 1. 
 \end{equation*} 
Letting 
\begin{equation*} 
\begin{split}
&\phi^k(t,x,\mu) :=  {\mathbb E}^{t,x,\mu} \bigl( \vert Z_{t}^{k}(x) \vert \bigr), 
\\
&\psi^k(t,x,\mu) :=  {\mathbb E}^{t,x,\mu} \bigl( Z_{t}^{k}(x)  \bigr) {\mathbf 1}_{\{ \phi^k(t,x,\mu) < \infty\}}, 
\end{split}
\end{equation*}
this yields
 \begin{equation*} 
 {\mathbb P}^{t,x,\mu} \Bigl( \bigl\{ Z_{t}^{k}(x) = \psi^k(t,x,\mu)\bigr\} \Bigr) = 1,
 \end{equation*}  
 when $\phi^k(t,x,\mu) < \infty$. 
 Assume for a while that the functions $\varphi^k$ and $\psi^k$ are jointly measurable (in $(t,x,\mu)$). Then, 
for any $s \in [t,T]$, 
$ {\mathbb P}^{t,x,\mu}$ almost surely, 
 \begin{equation*} 
 {\mathbb P}^{s,x,{\rm Leb}_{\mathbb S}\circ X_s^{-1}} \Bigl( \bigl\{ Z_{s}^{k}(x) =   \psi^k(s,x,
 {\rm Leb}_{\mathbb S}\circ X_s^{-1}
 )\bigr\} \Bigr) = 1, 
 \end{equation*} 
 when ${\mathbb E}^{s,x,{\rm Leb}_{\mathbb S}\circ X_s^{-1}}(\vert Z_s^k(x) \vert) < \infty$. 

By the conclusion of the second step, we have 
\begin{equation*} 
{\mathbb E}^{t,x,\mu} \int_t^T 
{\mathbb E}^{s,x,{\rm Leb}_{\mathbb S}\circ X_s^{-1}}(\vert Z_s^k(x) \vert)
\ud s 
= 
{\mathbb E}^{t,x,\mu} \int_t^T 
\vert Z_s^k(x) \vert
\ud s < \infty.
\end{equation*}
And then, 
this 
implies that the condition 
${\mathbb E}^{s,x,{\rm Leb}_{\mathbb S}\circ X_s^{-1}}(\vert Z_s^k(x) \vert) < \infty$ 
is satisfied almost everywhere under $\ud s \otimes \ud {\mathbb P}^{t,x,\mu}$. 
This
implies the claim (notice that it suffices to identify the integrand $Z^k(x)$
when the problem is set on the canonical space). 

It remains to check that the functions $\phi^k$ and  $\psi^k$ are measurable. This follows from a monotone class argument. Indeed, we observe that, for any 
subset $A$ in the Borel $\sigma$-field on $[0,T] \times {\mathbb R} \times {\mathcal P}_2({\mathbb R})$
and any $B$ in the Borel $\sigma$-field on $\Omega$, the mapping 
\begin{equation*} 
(t,x,\mu) \mapsto  \int_{\Omega} 
 {\mathbf 1}_A(t,x,\mu) {\mathbf 1}_B(\omega) 
 \ud 
 {\mathbb P}^{t,x,\mu} (\omega) 
= 
 {\mathbf 1}_A(t,x,\mu) 
{\mathbb P}^{t,x,\mu} (B)  
\end{equation*} 
is measurable. In particular, if we call ${\mathscr L}$ the collection of Borel
subsets $C$ 
of $[0,T] \times {\mathbb R} \times {\mathcal P}_2({\mathbb R}) \times \Omega$ such that 
\begin{equation*} 
(t,x,\mu) \mapsto  
{\mathbb P}^{t,x,\mu} (C)  
\end{equation*} 
is measurable, 
${\mathscr L}$ contains the Cartesian product of the two 
Borel $\sigma$-fields on the two spaces 
$[0,T] \times {\mathbb R} \times {\mathcal P}_2({\mathbb R})$ and 
$\Omega$. Now, it is clear that ${\mathscr L}$ is stable by monotone limits. This shows that 
${\mathscr L}$ is equal to the tensorial product  of the two 
Borel $\sigma$-fields on the two spaces 
$[0,T] \times {\mathbb R} \times {\mathcal P}_2({\mathbb R})$ and 
$\Omega$. Measurability of 
$\phi^k$ and $\psi^k$ easily follows.  
\end{proof} 

The purpose of the next lemma is to estimate the
functions $(\psi^k)_{k \in {\mathbb N}}$ in 
Lemma 
\ref{lem:2:5}. 
\begin{lem}
\label{lem:2:4}
With the same notation as in
\eqref{eq:representation}
and
Lemma 
\ref{lem:martingale}, we have, 
for every initial condition $(t,\mu)  \in [0,T] \times {\mathcal P}_2({\mathbb R})$ to \eqref{eq:V}, 
and every function $\theta \in L^2({\mathbb S})$, 
\begin{equation*}
\biggl\{ 
\sum_{k \in {\mathbb N}} 
\biggl( 
(1 \vee k)^{2 \lambda} 
\Bigl\vert 
\int_{\mathbb S} \theta(x) \psi^k(t,x,\mu) \ud x \Bigr\vert^2 \biggr)
\biggr\}^{1/2} \leq 
\sup_{t \leq s \leq T}
{\rm Lip} \biggl[ \int_{\mathbb S} \theta(x)  \, {\mathcal U}(s,x,\cdot)  \ud x \biggr],
\end{equation*} 
where 
\begin{equation*}
{\rm Lip} \biggl[ \int_{\mathbb S} \theta(x)  \, {\mathcal U}(s,x,\cdot)  \ud x \biggr]
:=
\sup_{\mu,\nu \in {\mathcal P}_2({\mathbb R}) : \mu \not = \nu}
{\mathbb W}_2(\mu,\nu)^{-1} \biggl\vert \int_{\mathbb S} 
\theta(x) \Bigl(  
 {\mathcal U}(s,x,\mu)
-
 {\mathcal U}(s,x,\nu)
 \Bigr) 
 \ud x 
\biggr\vert. 
\end{equation*} 
\end{lem}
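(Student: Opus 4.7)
\medskip

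\noindent\textbf{Proof plan.} Set $L := \sup_{t \le s \le T}\mathrm{Lip}[\int_{\mathbb S}\theta(x){\mathcal U}(s,x,\cdot)\,\ud x]$, which I may assume finite. The idea is to probe the coordinates $(\int\theta\psi^k(t,\cdot,\mu)\,\ud x)_k$ by a Girsanov shift of $W$, using the Lipschitz-in-$\mu$ control of $\Theta(s,\mu):=\int\theta(x){\mathcal U}(s,x,\mu)\,\ud x$ as the only bound on the noise side. For $\gamma\in L^2({\mathbb S})$ in the Cameron--Martin space of the $Q$-Brownian motion $W$ (i.e.\ such that $\sum_k(1\vee k)^{2\lambda}(\gamma^k)^2<\infty$, where $\gamma=\sum_k\gamma^k e_k$), for $\tau\in(0,T-t]$ and $\varepsilon>0$, define on $[t,t+\tau]$ the density
\begin{equation*}
\rho_\varepsilon:=\exp\Bigl(\varepsilon\sum_k(1\vee k)^\lambda\gamma^k(B^k_{t+\tau}-B^k_t)-\tfrac{\varepsilon^2\tau}{2}\sum_k(1\vee k)^{2\lambda}(\gamma^k)^2\Bigr).
\end{equation*}
Under $\tilde{\mathbb P}:=\rho_\varepsilon\ud{\mathbb P}$, the $Q$-Brownian motion picks up the drift $\int_t^{\cdot\wedge(t+\tau)}\varepsilon\gamma\,\ud r$, so that by pathwise uniqueness of \eqref{eq:V} one has $\tilde{\mathbb E}[G]={\mathbb E}[\bar G]$, where $G:=\int_{\mathbb S}\theta(x)[\partial_x g(X_T(x),\nu_T)+\int_t^T\partial_x f(X_s(x),\nu_s)\,\ud s]\,\ud x$ (with $\nu_s={\rm Leb}_{\mathbb S}\circ X_s^{-1}$), and $\bar G$ is the same functional evaluated at the coupled $\bar X$ that solves \eqref{eq:V} under ${\mathbb P}$ with the same driver $W$ and the additional drift $\varepsilon\gamma\mathbf 1_{[t,t+\tau]}(s)$.

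Computing ${\mathbb E}[\bar G]-{\mathbb E}[G]={\mathbb E}[G(\rho_\varepsilon-1)]$ two ways yields the key identity. On the Girsanov side, expanding $\rho_\varepsilon-1$ to first order in $\varepsilon$, inserting the martingale representation \eqref{eq:representation} with $Z^{t,\mu,k}_s(x)=\psi^k(s,x,\nu_s)$ (Lemma~\ref{lem:2:5}), and applying It\^o's isometry gives
\begin{equation*}
{\mathbb E}[\bar G]-{\mathbb E}[G]=\varepsilon\sum_k(1\vee k)^\lambda\gamma^k\,{\mathbb E}\!\int_t^{t+\tau}\!\!\int_{\mathbb S}\theta(x)\psi^k(s,x,\nu_s)\,\ud x\,\ud s+O(\varepsilon^2\|\gamma\|_{Q^{-1}}^2).
\end{equation*}
On the dynamical side, since $\bar X$ and $X$ coincide in law with the solution of the unperturbed \eqref{eq:V} started respectively from $(\bar X_{t+\tau},t+\tau)$ and $(X_{t+\tau},t+\tau)$, conditioning at $t+\tau$ and invoking Lemma~\ref{lem:martingale} gives
\begin{equation*}
{\mathbb E}[\bar G]-{\mathbb E}[G]={\mathbb E}\bigl[\Theta(t+\tau,\bar\nu_{t+\tau})-\Theta(t+\tau,\nu_{t+\tau})\bigr]+{\mathbb E}\!\int_t^{t+\tau}\!\!\int_{\mathbb S}\theta\bigl(\partial_x f(\bar X_r,\bar\nu_r)-\partial_x f(X_r,\nu_r)\bigr)\ud x\,\ud r.
\end{equation*}
The Lipschitz bound on $\Theta(t+\tau,\cdot)$, the rearrangement inequality ${\mathbb W}_2(\bar\nu_s,\nu_s)\le\|\bar X_s-X_s\|_2$, and a pathwise Gronwall estimate obtained by mimicking \eqref{eq:uniqueness:1} for the coupled pair (the reflection contributions are nonpositive, the Lipschitz of ${\mathcal V}$ controls the nonlinear term, and the extra drift contributes $2\varepsilon\langle\bar X-X,\gamma\rangle_2\le 2\varepsilon\|\bar X-X\|_2\|\gamma\|_2$) yield $\|\bar X_s-X_s\|_2\le\varepsilon\|\gamma\|_2(s-t)e^{C(s-t)}$, hence
\begin{equation*}
|{\mathbb E}[\bar G]-{\mathbb E}[G]|\le L\,\varepsilon\|\gamma\|_2\,\tau\,e^{C\tau}+O(\varepsilon\,\tau^2\,\|\gamma\|_2\|\theta\|_2).
\end{equation*}

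Matching the two expressions, dividing by $\varepsilon\tau$, letting $\varepsilon\to0$ and then $\tau\to0^+$ (identifying the time-average at $t$ via a Lebesgue-point argument for the measurable map $(s,\mu)\mapsto\int\theta\psi^k(s,x,\mu)\,\ud x$ from Lemma~\ref{lem:2:5}, combined with continuity of $s\mapsto\nu_s$), I obtain
\begin{equation*}
\Bigl|\sum_k(1\vee k)^\lambda\gamma^k\int_{\mathbb S}\theta(x)\psi^k(t,x,\mu)\,\ud x\Bigr|\le L\,\|\gamma\|_2,
\end{equation*}
for every $\gamma$ in the Cameron--Martin space. Since this subspace is dense in $L^2({\mathbb S})$ (it contains each $e_k$), I may test against finite truncations $\gamma=\sum_{k\le N}a_k e_k$, choose $a_k=(1\vee k)^\lambda\int\theta\psi^k(t,x,\mu)\,\ud x$, normalize, and let $N\to\infty$ to conclude the announced weighted $\ell^2$-bound.

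The principal obstacle is the $\tau\to0^+$ limit: the $\psi^k$ are defined pointwise at each $(t,x,\mu)$ by the construction of Lemma~\ref{lem:2:5}, whereas the Girsanov-expansion side produces only a time-average of $\psi^k(s,\cdot,\nu_s)$ over $[t,t+\tau]$. Identifying the latter with the former requires a Lebesgue-point argument, supplemented by the continuity of $s\mapsto\nu_s$ in ${\mathbb W}_2$ and the measurability granted by Lemma~\ref{lem:2:5}. The secondary, purely technical, point is the linear-in-$\tau$ Gronwall estimate on $\|\bar X-X\|_2$, which one obtains by applying \eqref{eq:uniqueness:1} to $(\|\bar X-X\|_2^2+\delta^2)^{1/2}$ and letting $\delta\to 0$, thereby avoiding the spurious $\sqrt{\tau}$-loss produced by a direct Gronwall on $\|\bar X-X\|_2^2$.
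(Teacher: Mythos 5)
Your Girsanov step and your stability estimate reproduce the paper's first two steps: the shift in the direction $\gamma$, the first-order identification of the cross term $\sum_k(1\vee k)^\lambda\gamma^k\,{\mathbb E}\int_t^{t+\tau}\langle\theta,Z^{t,\mu,k}_r\rangle_2\,\ud r$, and the linear-in-$\tau$ bound $\|\bar X_s-X_s\|_2\le \varepsilon\|\gamma\|_2(s-t)e^{C(s-t)}$ are exactly what the paper derives (with $\beta$, $(\ell_k)$, $h$ in place of $\varepsilon$, $\gamma$, $\tau$). The genuine gap is in your last step, the limit $\tau\to0^+$. The quantity you time-average, $F(s):={\mathbb E}\bigl[\int_{\mathbb S}\theta(x)\psi^k(s,x,{\rm Leb}_{\mathbb S}\circ X_s^{-1})\,\ud x\bigr]$, is only a measurable function of $s$: Lemma \ref{lem:2:5} gives no regularity of $\psi^k$ in $(s,\mu)$, so the ${\mathbb W}_2$-continuity of $s\mapsto{\rm Leb}_{\mathbb S}\circ X_s^{-1}$ does not make $F$ right-continuous at $s=t$, and a Lebesgue-differentiation argument only yields $\frac1\tau\int_t^{t+\tau}F(s)\,\ud s\to F(t)$ for almost every $t$, with an exceptional set depending on $\mu$, $\theta$, $k$ and on the chosen version of $\psi^k$ (moreover $F$ itself changes with the initial pair $(t,\mu)$, so there is no single function of $s$ to which the differentiation theorem applies uniformly in $t$). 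Since the lemma is needed for \emph{every} $(t,\mu)$ — and this is precisely how it is used in Lemma \ref{lem:phi}, where the bound on $\Psi^\theta(s,\cdot,{\rm Leb}_{\mathbb S}\circ X^{-1})$ must hold for all $(s,X)\in[t,T]\times U^2({\mathbb S})$ — the endpoint limit you invoke is not available from measurability alone.

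The paper's proof never takes $\tau\to0$. It keeps the estimate in time-integrated form, localizes it via regular conditional probabilities and tests against arbitrary progressively measurable processes $H$, obtaining $\vert\langle\theta,\sum_k\ell_k(1\vee k)^\lambda Z^k_r\rangle_2\vert\le R(\theta)$ for $\ud r\otimes\ud{\mathbb P}$-almost every $(r,\omega)$; it then exploits the fact that the $\psi^k$'s of Lemma \ref{lem:2:5} are only characterized almost everywhere along the flow, and redefines them (multiplying by indicators built from countably many test functions $\theta^{(q)}$ and weight sequences $\ell^{(p)}$, then using separability and a final approximation to drop the constraint $\sum_k(1\vee k)^{2\lambda}\ell_k^2<\infty$) so that the weighted $\ell^2$ bound holds at every $(t,\mu)$. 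If you wish to keep your scheme, the correct repair is the same: stop at the a.e. statement along the flow and modify $\psi^k$ on the exceptional set, rather than attempting a pointwise $\tau\to0^+$ identification at the initial time.
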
 

By 
item $(3)$ in 
Definition \ref{def:mathcalC}, we observe that the right-hand side in the last display of the 
statement is less than $C \| \theta \|_2$.

\begin{proof}
Throughout the proof, the initial condition $(t,\mu)$ is fixed. 
Recalling 
\eqref{eq:representation}, we merely write $(Z^j(x))_{j \in {\mathbb N}}$ for $(Z^{t,\mu,j}(x))_{j \in {\mathbb N}}$. 
\vskip 4pt 

\textit{First Step.}
For given $\beta \in {\mathbb R}$ and an infinite collection 
$(\ell_k)_{k \in {\mathbb N}} \in {\mathbb R}^{\mathbb N}$, 
with $\sum_{k \in {\mathbb N}} \ell_k^2 =1$
and with 
$\sum_{k \in {\mathbb N}} \ell_k^2 k^{2 \lambda}  < \infty$, 
and for $h \in [0,T-t)$, 
we consider the probability measure 
\begin{equation*} 
{\mathbb Q}^{\beta} := 
\exp \biggl( - \beta \sum_{k \in {\mathbb N}} \ell_k (1 \vee k)^\lambda \bigl( B_{t+h}^k - B_t ^k\bigr) - \frac{\beta^2}2 h
\sum_{k \in {\mathbb N}}\ell_k^2 (1 \vee k)^{2 \lambda} \biggr) \cdot {\mathbb P}. 
\end{equation*}
We know that, under ${\mathbb Q}^{\beta}$, the collection of processes
\begin{equation*} 
\Bigl( \tilde{B}_s^k := B_s^k + \beta \ell_k (1 \vee k)^\lambda \bigl(s \wedge (t+h)
-t 
\bigr)_{t \leq s \leq T} \Bigr)_{k \in {\mathbb N}} 
\end{equation*}
are independent Brownian motions. 

Fix $x \in {\mathbb S}$. 
Recalling that 
\begin{equation} 
\label{eq:U:martingale}
\begin{split}
{\mathcal U}\bigl(s,x,{\rm Leb}_{\mathbb S} \circ X_s^{-1} \bigr) 
&= {\mathcal U}(t,x,\mu) 
-
 \int_t^s
\partial_x f \bigl( X_r(x),{\rm Leb}_{\mathbb S} \circ X_r^{-1} \bigr)
\ud r
\\
&\hspace{15pt} + \sum_{k \in {\mathbb N}} 
\int_t^s Z_r^k(x) \ud B_r^k, \quad s \in [t,T], 
\end{split} 
\end{equation} 
we obtain 
\begin{equation*} 
\begin{split} 
{\mathcal U}\bigl(s,x,{\rm Leb}_{\mathbb S} \circ X_s^{-1} \bigr) 
&= {\mathcal U}(t,x,\mu) 
-
 \int_t^s
\partial_x f \bigl( X_r(x),{\rm Leb}_{\mathbb S} \circ X_r^{-1} \bigr)
\ud r
\\
&\hspace{5pt} - 
 \sum_{k \in {\mathbb N}}  \beta \int_t^s \ell_k (1 \vee k)^\lambda Z_r^k(x) \ud r
+
 \sum_{k \in {\mathbb N}} 
\int_t^s Z_r^k(x) \ud \tilde B_r^k, \quad s \in [t,t+h]. 
\end{split} 
\end{equation*}
 And then, choosing $s=t+h$ and taking expectation under ${\mathbb Q}^{\beta}$, 
 \begin{equation} 
 \label{eq:qbeta}
\begin{split}
& {\mathbb E}^{{\mathbb Q}^{\beta}} 
 \Bigl[
 {\mathcal U}\bigl(t+h,x,{\rm Leb}_{\mathbb S} \circ X_{t+h}^{-1} \bigr) 
-  {\mathcal U}(t,x,\mu) \Bigr] 
\\
&=
-
 {\mathbb E}^{{\mathbb Q}^{\beta}}  \int_t^{t+h}
\partial_x f \bigl( X_r(x),{\rm Leb}_{\mathbb S} \circ X_r^{-1} \bigr)
\ud r
 - \beta  \sum_{k \in {\mathbb N}} {\mathbb E}^{{\mathbb Q}^{\beta}} \int_t^{t+h} 
 \ell_k ( 1 \vee k )^\lambda
Z_r^k(x) \ud r.
\end{split} 
 \end{equation}
Next, we observe that, under ${\mathbb Q}^{\beta}$, 
\begin{equation*} 
\begin{split}
\ud X_s(x) &= -  {\mathcal V}\bigl(s,x,{\rm Leb}_{\mathbb S} \circ X_s^{-1} 
\bigr) \ud s 
+ \Delta X_s(x) \ud s -  \beta  \sum_{k \in {\mathbb N}}\ell_k e_k(x) \ud s
\\
&\hspace{15pt} + \sum_{k \in {\mathbb N}} \frac{1}{(1 \vee k)^\lambda} e_k(x) \ud \tilde B^k_s 
+   \ud \eta_s(x), \quad x \in {\mathbb S}, \quad s \in [t,t+h]. 
\end{split}
\end{equation*} 
The law of $(X_s(\cdot))_{t \leq s \leq t+h}$ under ${\mathbb Q}^\beta$ 
is the same as the law of 
$(X_s^\beta(\cdot))_{t \leq s \leq t+h}$ under ${\mathbb P}$, where 
\begin{equation*} 
\begin{split}
\ud X_s^\beta(x) &= - {\mathcal V}\bigl(s,x,{\rm Leb}_{\mathbb S} \circ (X^\beta)_s^{-1} 
\bigr) \ud s 
+ \Delta X_s^\beta(x) \ud s -  \beta \sum_{k \in {\mathbb N}} \ell_k e_k(x) \ud s
\\
&\hspace{15pt} + \sum_{k \in {\mathbb N}} \frac{1}{(1 \vee k)^\lambda} e_k(x) \ud   B^k_s 
+   \ud \eta^\beta_s(x), \quad x \in {\mathbb S}, \quad s \in [t,t+h], 
\end{split}
\end{equation*} 
with $X_t^{\beta}=X_t$ as initial condition
(uniqueness to the above equation can be shown as in 
\eqref{eq:uniqueness:1}). 
Therefore, 
by 
\eqref{eq:qbeta}, 
\begin{equation*} 
\begin{split}
& {\mathbb E}  
 \Bigl[ 
{\mathcal  U}\bigl(t+h,x,{\rm Leb}_{\mathbb S} \circ (X^\beta_{t+h})^{-1} \bigr) 
- {\mathcal  U}(t,x,\mu) \Bigr] 
\\
&= 
-
 {\mathbb E}  \int_t^{t+h} 
\partial_x f \bigl( X_r^\beta(x),{\rm Leb}_{\mathbb S} \circ (X_r^\beta)^{-1} \bigr)
\ud r
- \beta  \sum_{k \in {\mathbb N}} {\mathbb E}^{{\mathbb Q}^{\beta}} \int_t^{t+h} 
\ell_k (1 \vee k)^\lambda
Z_r^k(x) \ud r.
\end{split} 
\end{equation*} 
By the property 
\eqref{eq:U:martingale}, 
\begin{equation*} 
\begin{split}
 &{\mathbb E}  
 \Bigl[
 {\mathcal U}\bigl(t+h,x,{\rm Leb}_{\mathbb S} \circ (X^\beta_{t+h})^{-1} \bigr) 
-  {\mathcal U} \bigl(t+h,x,{\rm Leb}_{\mathbb S} \circ (X_{t+h})^{-1} \bigr)
  \Bigr]
  \\
  & =
  -
 {\mathbb E}  \int_t^{t+h} 
 \Bigl[
\partial_x f \bigl( X_r^\beta(x),{\rm Leb}_{\mathbb S} \circ (X_r^\beta)^{-1} \bigr)
-
\partial_x f \bigl( X_r (x),{\rm Leb}_{\mathbb S} \circ X_r^{-1} \bigr)
\Bigr]
\ud r 
\\
&\hspace{15pt}  - \beta  \sum_{k \in {\mathbb N}} {\mathbb E}^{{\mathbb Q}^{\beta}} \int_t^{t+h} 
\ell_k (1 \vee k)^\lambda
Z_r^k(x)  \ud r.
\end{split} 
\end{equation*}

\textit{Second Step.} Reproducing the stability analysis carried out in 
\eqref{eq:uniqueness:1}
and 
in Lemma
\ref{lem:U}, we 
can get an estimate for 
$\| X_{t+h}^{\beta} - X_{t+h} \|_2^2$ 
and then deduce 
by integrating in $x$ the above identity
that, for any smooth function 
$\theta : {\mathbb S} \rightarrow {\mathbb R}$, 
\begin{align}
\biggl\vert \beta 
\sum_{k \in {\mathbb N}} \ell_k (1 \vee k)^\lambda
{\mathbb E}^{{\mathbb Q}^{\beta}} \int_t^{t+h} 
\biggl( 
\int_{\mathbb S} \theta(x)  Z_r^k(x)   \ud x \biggr)  \ud r 
\biggr\vert   
&\leq
{\rm Lip} 
\biggl[ \int_{\mathbb S} \theta(x)  \, {\mathcal U}(t+h,x,\cdot)  \ud x \biggr] 
e^{C_{\mathcal V} h}  \vert \beta \vert h 
\nonumber
\\
&\hspace{15pt} + C_{\mathcal V}   \vert \beta \vert \, \| \theta \|_2 h^2,
\label{eq:beta:qbeta} 
\end{align} 
where $C_{\mathcal V}$ depends on the Lipschitz constant of ${\mathcal V}$. 
The key point to get the above formula is to repeat 
\eqref{eq:uniqueness:1}, with $X'$ replaced by 
$X^\beta$ and to use the bound
\begin{equation*} 
\begin{split} 
\| X_{s} - X_{s}^\beta \|_2^2 
&\leq  
C_{\mathcal V} 
\int_t^{s} 
\| X_r - X_r^\beta \|_2^2 \ud r
+  2 \beta \sum_{k \in {\mathbb N}} 
\int_t^{s} \ell_k \langle X_r - X_r^{\beta}, e_k \rangle_2 \ud r
\\
&\leq 2 \vert \beta \vert e^{C_{\mathcal V} h} 
\int_t^{s}  \| X_r - X_r^\beta \|_2 \ud r, 
\end{split} 
\end{equation*} 
where we used the fact that $\sum_{k \in {\mathbb N}} \ell_k^2 =1$. 
We easily deduce that 
$\| X_{s} - X_{s}^\beta \|_2  \leq e^{C_{\mathcal V} h} \vert \beta \vert h$
for $s \in [t,t+h]$. 
Dividing by 
$\beta$ on both sides in 
\eqref{eq:beta:qbeta}
and then letting $\beta$ tend to $0$, we deduce that 
\begin{equation*} 
 \begin{split} 
 \biggl\vert \sum_{k \in {\mathbb N}} \ell_k
(1 \vee k )^{\lambda}  
   {\mathbb E}  \int_t^{t+h} 
\biggl( \int_{\mathbb S} \theta(x) Z_r^k(x)  \ud x \biggr) \, \ud r
\biggr\vert
&\leq
{\rm Lip} \biggl[ 
 \int_{\mathbb S} \theta(x) \,   {\mathcal U}(t+h,x,\cdot) \ud x \biggr] e^{C_{\mathcal V} h} h
 \\
&\hspace{15pt}   + C_{\mathcal V}    \| \theta \|_2 h^2.
\end{split} 
\end{equation*} 

We then argue as in the proof of Lemma
\ref{lem:martingale}.
Assuming that we are working on the canonical space $\Omega={\mathscr C}([0,T] ,L^2({\mathbb S}))$ equipped with the 
law ${\mathbb P}$ of a 
$Q$-Brownian motion, we consider 
a regular conditional probability $({\mathbb P}_{s,\omega})_{\omega \in \Omega}$ of ${\mathbb P}$ given the $\sigma$-field generated by 
$\sigma(W(r,\cdot) ; r \leq s)$ ($W$ here standing for the canonical process). Then, $\omega$ ${\mathbb P}$-almost surely, 
\begin{equation*} 
\begin{split}
\biggl\vert   \sum_{k \in {\mathbb N}} \ell_k
(1 \vee k )^{\lambda}   {\mathbb E}^{{\mathbb P}_{s,\omega}}  \int_s^{s+h} 
\biggl( \int_{\mathbb S} 
\theta(x) 
Z_r^k(x) \, \ud x \biggr) \, \ud r
\biggr\vert 
&\leq {\rm Lip}
\biggl[
\int_{\mathbb S}
\theta(x)   \, 
 {\mathcal U}(s+h,x,\cdot) \ud x  \biggr]
e^{C_{\mathcal V} h} h  \\
&\hspace{15pt} + C_{\mathcal V}   \| \theta \|_2 h^2.
\end{split}  
\end{equation*} 
\vskip 4pt

\textit{Third Step.}
Consider now a 
mesh $\{t=t_0<t_1<\cdots<t_n=T\}$
of $[t,T]$ of step $h$ together with 
a 
simple process $H : [t,T] \times  \Omega \rightarrow {\mathbb R}$ such that 
\begin{equation*} 
H_s = H_{t_i}, \quad s \in [t_i,t_{i+1}),
\end{equation*}
and $H_{t_i} :   \Omega \rightarrow {\mathbb R}$ is bounded and ${\mathcal F}_{t_i}/{\mathcal B}({\mathbb R})$-measurable. 
Then, 
the conclusion of the third step yields
\begin{equation*} 
\begin{split}
&\biggl\vert  \sum_{k \in {\mathbb N}} \ell_k
(1 \vee k )^{\lambda}
 {\mathbb E}^{{\mathbb P}_{t_i,\omega}}  \int_{t_i}^{t_{i+1}} 
\biggl[ H_{t_i} \biggl( \int_{\mathbb S}  \theta(x) Z_r^k(x) \, \ud x \biggr) \biggr] \ud r 
\biggr\vert 
\\
& \leq
\vert H_{t_i} \vert \, 
 {\rm Lip}
\biggl[
\int_{\mathbb S}
\theta(x) \,  
 {\mathcal U}(s+h,x,\cdot) \ud x  \biggr]
e^{C_{\mathcal V} h} h   + C_{\mathcal V}   \| \theta \|_2 h^2.
\end{split}
\end{equation*} 
Taking expectation and then summing over $i$, 
we deduce that 
\begin{equation*}
\begin{split} 
&\biggl\vert   
 \sum_{k \in {\mathbb N}} \ell_k
(1 \vee k )^{\lambda}
{\mathbb E}
  \int_t^T
\biggl[
  H_r
  \biggl( \int_{\mathbb S} 
\theta(x) 
Z_r^k(x) \ud x
\biggr)
\biggr] 
\ud r
\biggr\vert 
\\
&\leq 
\sup_{t \leq s \leq T}
 {\rm Lip}
\biggl[
\int_{\mathbb S}
\theta(x)  \, 
 {\mathcal U}(s,x,\cdot) \ud x  \biggr]
e^{C_{\mathcal V} h}  
{\mathbb E} 
\int_t^T
 \vert H_r \vert \, \ud r    + C_{\mathcal V}   \| \theta \|_2 T h.
\end{split}
\end{equation*} 
By a standard result of approximation of progressively measurable processes, 
the above remains true for any 
${\mathbb F}$-progressively measurable process 
$(H_r)_{t \leq r \leq T}$
satisfying 
\begin{equation*} 
{\mathbb E} \int_t^T  \vert H_r \vert^2  \, \ud r < \infty,
\end{equation*}
where we used implicitly the fact that 
(see \eqref{eq:representation})
\begin{equation*} 
\sum_{k \in {\mathbb N}} 
{\mathbb E} 
\int_t^T
\biggl(  \int_{\mathbb S} \vert Z_r^k(x) \vert^2 \ud x \biggr) \ud r
< \infty. 
\end{equation*} 
Letting $h$ tend to $0$, 
we get 
\begin{equation*}
\begin{split} 
&\biggl\vert 
{\mathbb E}
  \int_t^T
\biggl[
  H_r
  \biggl( \int_{\mathbb S} 
\theta(x) 
\biggl\{ 
 \sum_{k \in {\mathbb N}} \ell_k
(1 \vee k )^{\lambda}  
Z_r^k(x) \biggr\} \ud x
\biggr)
\biggr] 
\ud r
\biggr\vert 
\\
&=
\biggl\vert 
 \sum_{k \in {\mathbb N}} \ell_k
(1 \vee k )^{\lambda}  
{\mathbb E}
  \int_t^T
\biggl[
  H_r
  \biggl( \int_{\mathbb S} 
\theta(x) 
Z_r^k(x) \ud x
\biggr)
\biggr] 
\ud r
\biggr\vert 
\\
& \leq 
\sup_{t \leq s \leq T}
 {\rm Lip}
\biggl[
\int_{\mathbb S}
\theta(x)  \, 
 {\mathcal U}(s,x,\cdot)  \ud x  \biggr]
{\mathbb E} 
\int_t^T
 \vert H_r \vert \, \ud r.  
\end{split}
\end{equation*} 
Writing the integral over ${\mathbb S}$ in the left-hand side as 
the inner product 
$\langle \theta,
\sum_{k \in {\mathbb N}} 
\ell_k (1 \vee k )^\lambda
Z_r^k \rangle_2$ in $L^2({\mathbb S})$, we deduce that 
\begin{equation*} 
\int_t^T
{\mathbb P} 
\Bigl( \Bigl\{ 
\Bigl\vert
\bigl\langle \theta, \sum_{k \in {\mathbb N}} 
\ell_k (1 \vee k )^\lambda
Z_r^k \bigr\rangle_2 
\Bigr\vert
> R(\theta)
\Bigr\}
\Bigr) 
 \ud r = 0, 
\end{equation*}
with 
\begin{equation*} 
R(\theta) := \sup_{t \leq s \leq T}  {\rm Lip} \biggl[ \int_{\mathbb S} \theta(x) \, {\mathcal U}(s,x,\cdot)  \ud x \biggr].
\end{equation*}
Back to Lemma 
\ref{lem:2:5}, this says that we can replace $\psi^k(t,x,\mu)$ by 
$$\psi^k(t,x,\mu) 
\prod_{p \in {\mathbb N}} 
\prod_{q \in {\mathbb N}} 
{\mathbf 1}_{ \Bigl\{
\bigl\vert 
\sum_{j \in {\mathbb N}} 
\ell_j^{(p)} (1 \vee j )^\lambda
 \langle \psi^j(t,\cdot,\mu), \theta^{(q)} \rangle_2
 \bigr\vert  \leq R(\theta^{(q)})\Bigr\}},$$ 
for countably many $(\theta^{(q)})_{q \in {\mathbb N}}$'s
and countably many sequences $(\ell^{(p)})_{p \in {\mathbb N}}$'s. 
By a separability argument, we then have 
\begin{equation*} 
\Bigl\vert 
\sum_{k \in {\mathbb N}}
\ell_k ( 1 \vee k)^\lambda 
\langle \psi^k(t,\cdot,\mu),\theta \rangle_2 \Bigr\vert \leq R(\theta),
\end{equation*} 
for all $\theta \in L^2({\mathbb S})$ and all $(\ell_k)_{k \in {\mathbb N}}$
with 
$\sum_{k \in {\mathbb N}} \ell_k^2 =1$
and 
$\sum_{k \in {\mathbb N}} (1 \vee k)^{2\lambda} \ell_k^2 < \infty$.
By a new approximation argument, the last condition can be dropped.  
This completes the proof. 
\end{proof}

\subsection{Analysis over an arbitrary fixed time duration} 
\label{subse:3:3}
In order to extend existence and uniqueness from small to arbitrary time durations, one needs to provide an \textit{a priori} bound for fixed points of 
$\Phi$. 
In order to clarify the analysis, we denote, for any $t \in (0,T)$, by ${\mathcal C}_t$ the space of functions ${\mathcal V}:[t,T] \times {\mathbb S} \times {\mathcal P}_2({\mathbb R}) \rightarrow {\mathbb R}$ 
that satisfy Definition 
\ref{def:mathcalC} but on the time interval $[t,T]$. With a slight abuse of notation, we then write 
$\Phi({\mathcal V})$ for the image of ${\mathcal V}$ by the restriction of the mapping $\Phi$ 
to the interval $[t,T]$.

\begin{lem}
\label{lem:phi}
Let 
$t \in (0,T)$ and
${\mathcal V}$ be an element of ${\mathcal C}_t$ 
satisfying 
\begin{equation*}
\bigl\| 
 \nabla_x {\mathcal V}\bigl(s,\cdot,{\rm Leb}_{\mathbb S} \circ X^{-1} 
 \bigr) 
 \bigr\|_2^2 \leq C + \frac{C}{T-s} \| X \|_2^2, \quad X \in U^2({\mathbb S}), 
 \quad s \in [t,T),
\end{equation*}
for some constant $C \geq 0$. 

Then,
for any $\theta \in L^2({\mathbb S})$, 
the function $(t,\mu) \mapsto 
\langle \theta,{\mathcal U}(t,\cdot,\mu) \rangle_2$, with ${\mathcal U} :=\Phi({\mathcal V})$, can be represented   as 
\begin{equation*} 
\begin{split}
&\bigl\langle \theta,{\mathcal U}(t,\cdot,\mu)\bigr\rangle_2 = 
{\mathbb E} \biggl[ 
\int_{\mathbb S} 
\theta(x) \partial_x g \Bigl( X_T^0(x), {\rm Leb}_{\mathbb S} \circ 
(X_T^0)^{-1} \Bigr) \ud x
\\
&\hspace{5pt} + \int_t^T \biggl( \int_{\mathbb S} 
\theta(x) 
\partial_x f \Bigl( X_s^0(x), {\rm Leb}_{\mathbb S} \circ 
(X_s^0)^{-1} \Bigr) \ud x \biggr) \ud s
\\
&\hspace{5pt} +
\int_t^T 
\int_{\mathbb S} 
{\mathcal V} \bigl( s, y, 
{\rm Leb}_{\mathbb S} \circ 
(X_s^0)^{-1}
\bigr) 
\Psi^\theta  \bigl(s,y,
{\rm Leb}_{\mathbb S} \circ 
(X_s^0)^{-1} 
 \bigr) \ud y \, \ud s 
 \biggr], 
 \quad (t,\mu) \in [0,T] \times {\mathcal P}_2({\mathbb R}), 
 \end{split} 
\end{equation*} 
for a 
measurable function $\Psi^\theta : [t,T] \times  {\mathbb S} \times 
{\mathcal P}_2({\mathbb R}) \rightarrow 
{\mathbb R}$ satisfying
\begin{equation*} 
\int_{\mathbb S}
\Bigl\vert 
  \Psi^\theta \bigl(s,y,{\rm Leb}_{\mathbb S} \circ 
X^{-1} \bigr) 
\Bigr\vert^2 \ud y 
 \leq 
\sup_{t \leq s \leq T}
{\rm Lip} \biggl[ \int_{\mathbb S} \theta(x)  \, {\mathcal U}(s,x,\cdot)  \ud x \biggr]^2, \quad (s,X) \in [t,T] \times U^2({\mathbb S}). 
\end{equation*} 
We recall that, above, $(X^0_s)_{t \leq s \leq T}$ is the solution to \eqref{eq:V} with $0$-drift and  
$(t,\mu)$ as initial condition and 
that ${\rm Lip}$
is as in the statement of 
Lemma \ref{lem:2:4}. 
\end{lem}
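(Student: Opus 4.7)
My plan is to derive this identity via a Girsanov change of measure combined with the martingale representation of Lemma~\ref{lem:2:5}. First I will apply Lemma~\ref{lem:2:5} to the ${\mathcal V}$-driven dynamics, obtaining measurable functions $\psi^k : [t,T]\times {\mathbb S} \times {\mathcal P}_2({\mathbb R}) \to {\mathbb R}$ such that, for the $X$-process from initial condition $(t,\mu)$ under ${\mathbb P}$,
\begin{equation*}
\partial_x g \bigl( X_T(x),{\rm Leb}_{\mathbb S} \circ X_T^{-1} \bigr) + \int_t^T \partial_x f \bigl( X_s(x),{\rm Leb}_{\mathbb S} \circ X_s^{-1} \bigr) \ud s = {\mathcal U}(t,x,\mu) + \sum_{k} \int_t^T \psi^k\bigl(s,x,{\rm Leb}_{\mathbb S} \circ X_s^{-1}\bigr) \ud B_s^k,
\end{equation*}
with Lemma~\ref{lem:2:4} providing the key estimate $\sum_k (1\vee k)^{2\lambda}\bigl\vert \int_{\mathbb S}\theta(x)\psi^k(s,x,\nu)\ud x \bigr\vert^2 \le \sup_{t \le s \le T} {\rm Lip}[\int_{\mathbb S}\theta(x) {\mathcal U}(s,x,\cdot)\ud x]^2$.

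The second step is to introduce the Girsanov density
\begin{equation*}
{\mathcal L} := \exp \biggl( -\sum_{k} (1\vee k)^\lambda \int_t^T \bigl\langle {\mathcal V} \bigl(r,\cdot,{\rm Leb}_{\mathbb S} \circ (X_r^0)^{-1}\bigr), e_k \bigr\rangle_2 \ud B_r^k - \frac{1}{2}\sum_{k} (1\vee k)^{2\lambda} \int_t^T \bigl\vert \bigl\langle {\mathcal V}(r,\cdot,{\rm Leb}_{\mathbb S} \circ (X_r^0)^{-1}),e_k\bigr\rangle_2 \bigr\vert^2 \ud r \biggr),
\end{equation*}
and set ${\mathbb Q} := {\mathcal L} \cdot {\mathbb P}$. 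Under ${\mathbb Q}$ the processes $B^{*,k}_s := B_s^k + (1\vee k)^\lambda \int_t^s \langle {\mathcal V}(r,\cdot,{\rm Leb}_{\mathbb S} \circ (X_r^0)^{-1}), e_k \rangle_2 \ud r$ are independent Brownian motions, and $X^0$ solves the ${\mathcal V}$-driven SPDE \eqref{eq:V} with driving noise $W^*:=\sum_k k^{-\lambda} e_k B^{*,k}$. Pathwise uniqueness (Theorem~\ref{thm:main:existence:uniqueness}) then forces $(X^0, W^*)$ under ${\mathbb Q}$ to have the same joint law as $(X, W)$ under ${\mathbb P}$, so the martingale representation above transfers verbatim with $X$ replaced by $X^0$ and $B^k$ by $B^{*,k}$, ${\mathbb Q}$-almost surely and hence also ${\mathbb P}$-almost surely. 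Substituting $\ud B_s^{*,k} = \ud B_s^k + (1\vee k)^\lambda \langle {\mathcal V}(s,\cdot,{\rm Leb}_{\mathbb S} \circ (X_s^0)^{-1}), e_k\rangle_2 \ud s$ into the transferred identity, integrating against $\theta(x)$, and taking ${\mathbb P}$-expectation (the $\ud B^k$-stochastic integrals vanish by the martingale property, justified by the uniform-in-$\omega$ bound on $\sum_k (1\vee k)^{2\lambda}\vert\int \theta \psi^k\vert^2$ from Lemma~\ref{lem:2:4}) yields the stated identity with
\begin{equation*}
\Psi^\theta(s,y,\nu) := -\sum_{k} (1\vee k)^\lambda e_k(y) \int_{\mathbb S} \theta(x) \psi^k(s,x,\nu) \ud x,
\end{equation*}
and the required $L^2$-bound on $\Psi^\theta$ follows directly from Parseval's identity for the cosinus basis $(e_k)$ combined with Lemma~\ref{lem:2:4}.

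The main technical obstacle will be justifying that ${\mathcal L}$ is a true martingale, i.e.\ verifying Novikov's criterion, which requires exponential integrability of $\frac{1}{2}\int_t^T \sum_k (1\vee k)^{2\lambda}\bigl\vert\langle {\mathcal V}(s,\cdot,{\rm Leb}_{\mathbb S} \circ (X_s^0)^{-1}), e_k\rangle_2\bigr\vert^2 \ud s$, essentially the $H^\lambda$-norm of ${\mathcal V}$ integrated in time. By interpolation $\|\cdot\|_{H^\lambda}^2 \le \|\cdot\|_2^{2(1-\lambda)}\|\cdot\|_{H^1}^{2\lambda}$, the uniform $L^\infty$-bound on ${\mathcal V}$ from Definition~\ref{def:mathcalC}, and the assumed gradient estimate $\|\nabla_x {\mathcal V}(s,\cdot,{\rm Leb}_{\mathbb S} \circ X^{-1})\|_2^2 \le C + C(T-s)^{-1}\|X\|_2^2$, the integrand will be controlled by $C(T-s)^{-\lambda}(1+\|X_s^0\|_2^{2\lambda})$; since $\lambda < 1$ the temporal singularity is integrable, and the exponential integrability of $\sup_{s\le T}\|X_s^0\|_2^2$ supplied by Theorem~\ref{thm:main:existence:uniqueness} should, via a localization argument if needed, secure Novikov's condition and hence validate the change of measure underpinning the entire scheme.
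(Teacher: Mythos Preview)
Your proposal is correct and follows essentially the same route as the paper: a Girsanov change of measure linking the $\mathcal V$-driven process and $X^0$, combined with the Markovian martingale representation of Lemma~\ref{lem:2:5} and the bound of Lemma~\ref{lem:2:4}, leading to $\Psi^\theta(s,y,\nu) = \pm\sum_k (1\vee k)^\lambda e_k(y)\,\langle\theta,\psi^k(s,\cdot,\nu)\rangle_2$. The only inessential differences are that you run Girsanov in the reverse direction (tilting so that $X^0$ becomes $\mathcal V$-driven, whereas the paper tilts so that $X$ becomes driftless) and that you verify Novikov via the standard Sobolev interpolation $\|\cdot\|_{H^\lambda}^2 \le \|\cdot\|_{L^2}^{2(1-\lambda)}\|\cdot\|_{H^1}^{2\lambda}$ rather than the paper's Fourier-side H\"older estimate; both produce a subquadratic power of $\sup_s\|X^0_s\|_2$ after integration in time, so the exponential moment bound of Theorem~\ref{thm:main:existence:uniqueness} applies directly and no localization is needed.
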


\begin{proof}
\textit{First Step.}
Back to \eqref{eq:V} (with $(t,\mu)$ as initial condition), we write 
\begin{equation*} 
\begin{split}
{\mathcal V}\bigl(s,x,\text{\rm Leb}_{\mathbb S} \circ X_s^{-1} \bigr) \ud s 
 + \ud W_s(x)
& = \sum_{k \in {\mathbb N}} 
\frac{ e_k(x) 
}{(1 \vee k)^\lambda} \Bigl( (1 \vee k)^\lambda \widehat{\mathcal V}^k(s, \text{\rm Leb}_{\mathbb S} \circ X_s^{-1} ) 
\ud s
 + \ud B_s^k \Bigr)
 \\
 &=: \sum_{k \in {\mathbb N}} 
 e_k(x) \ud \widetilde{B}_s^k,
 \end{split}
\end{equation*} 
where $\widehat{\mathcal V}^k(s,\mu)$ is the Fourier coefficient number $k$ of ${\mathcal V}(s,\cdot,\mu)$. 
This Fourier coefficient can be estimated in two ways:
\begin{equation*}
\begin{split}
&\bigl\vert \widehat{\mathcal V}^k(s,\mu) \bigr| \leq  \| {\mathcal V} \|_\infty, \quad k \in {\mathbb N}, 
\\
&\bigl\vert \widehat{\mathcal V}^k(s,\mu) \bigr| \leq \frac{1}{k} \bigl| \widehat{\nabla {\mathcal V}}^k(s,\mu) \bigr|, \quad k \geq 1. 
\end{split} 
\end{equation*}And then, we can find a constant $C$ (possibly different from the one in the statement) such that 
\begin{equation*} 
\begin{split}
 \sum_{k \in {\mathbb N}} 
\bigl\vert (1 \vee k)^\lambda \widehat{\mathcal V}^k(s, \text{\rm Leb}_{\mathbb S} \circ X_s^{-1} )
\bigr\vert^2
\leq C + C \sum_{k \geq 1} \frac{k^{2\lambda}}{k^{1+\delta}} 
\bigl\vert
 \widehat{\nabla {\mathcal V}}^k(s, \text{\rm Leb}_{\mathbb S} \circ X_s^{-1} )
\bigr\vert^{1+\delta},
\end{split}
\end{equation*} 
where $\delta$ is a parameter in $(0,1)$ whose value is fixed next.   
By Cauchy-Schwarz inequality, we obtain 
\begin{equation*} 
\begin{split}
&\sum_{k \in {\mathbb N}} 
\bigl\vert (1 \vee k)^\lambda \widehat{\mathcal V}^k(s, \text{\rm Leb}_{\mathbb S} \circ X_s^{-1} )
\bigr\vert^2
\\
&\leq C + C 
\biggl( \sum_{k \geq 1} 
\bigl[ \frac{k^{2\lambda}}{k^{1+\delta}} 
\bigr]^{2/(1-\delta)}
 \biggr)^{(1-\delta)/2}
\biggl( \sum_{k \geq 1} 
\bigl\vert
 \widehat{\nabla {\mathcal V}}^k(s, \text{\rm Leb}_{\mathbb S} \circ X_s^{-1} )
\bigr\vert^{2} \biggr)^{(1+\delta)/2}.
\end{split}
\end{equation*}
We observe that $2 \lambda <2$ and then we can choose $\delta \in (0,1)$ such that $1+\delta > 2 \lambda$. 
This says that $2\lambda - (1+\delta) < 0$.
Since $2/(1-\delta)$ tends to $\infty$ as $\delta$ tends to $1$, we can increase the value of $\delta$
(still in $(0,1)$) so that 
$2[2\lambda - (1+\delta)]/(1-\delta) < -1$.  
We obtain (for a new value of $C$ depending on $\delta$) 
\begin{equation*}
\begin{split} 
 \sum_{k \in {\mathbb N}} 
\bigl\vert (1 \vee k)^\lambda \widehat{\mathcal V}^k(s, \text{\rm Leb}_{\mathbb S} \circ X_s^{-1} )
\bigr\vert^2
\ud s 
&\leq C + C 
\biggl( \sum_{k \geq 1} 
\bigl\vert
 \widehat{\nabla {\mathcal V}}^k(s, \text{\rm Leb}_{\mathbb S} \circ X_s^{-1} )
\bigr\vert^{2} \biggr)^{(1+\delta)/2}
\\
&= C + C  
\bigl\|
 \widehat{\nabla {\mathcal V}}(s, \text{\rm Leb}_{\mathbb S} \circ X_s^{-1} )
\bigr\|_2^{1+\delta}
\\
&\leq C + \frac{\| X_s \|_2^{1+\delta}}{(T-s)^{(1+\delta)/2}},
\end{split}
\end{equation*} 
with the last line following from 
the assumption in the statement (of Lemma \ref{lem:phi}). 
And then, 
 \begin{equation}
  \label{eq:enegery:girsanov}
\begin{split}
\int_0^T  \sum_{k \in {\mathbb N}} 
\bigl\vert (1 \vee k)^\lambda \widehat{\mathcal V}^k(s, \text{\rm Leb}_{\mathbb S} \circ X_s^{-1} )
\bigr\vert^2
\ud s 
&\leq C  \Bigl( 1+ \sup_{0 \leq s \leq T} \| X_s \|_2^{1+\delta} \Bigr).
\end{split}
\end{equation} 
By \eqref{eq:main:existence:uniqueness}, the right-hand has finite exponential moments of any order, from which we deduce that Novikov criterion is satisfied. Hence, we can consider the new probability measure 
\begin{equation*} 
{\mathbb Q} := 
\exp \biggl( 
-
 \sum_{k \in {\mathbb N}} 
(1 \vee k)^\lambda \widehat{\mathcal V}^k(s, \text{\rm Leb}_{\mathbb S} \circ X_s^{-1} )
\ud  B_s^k 
- 
\frac12 
 \sum_{k \in {\mathbb N}} 
\Bigl\vert (1 \vee k)^\lambda \widehat{\mathcal V}^k(s, \text{\rm Leb}_{\mathbb S} \circ X_s^{-1} )
\bigr\vert^2 \ud s \biggr) \cdot {\mathbb P}. 
\end{equation*}
Under ${\mathbb Q}$, the law of $(X_s)_{0 \le s \le T}$ is the same as the law of $X^0$ under ${\mathbb P}$, see
\cite[Theorem 10.14]{dapratozabczyk}.
\vskip 4pt

\textit{Second Step.}
We now observe that, under ${\mathbb Q}$,  
\eqref{eq:representation} becomes
\begin{equation*} 
\begin{split}
&\partial_x g \bigl( X_T(x),{\rm Leb}_{\mathbb S} \circ X_T^{-1} \bigr)
+ \int_t^T
\partial_x f \bigl( X_s(x),{\rm Leb}_{\mathbb S} \circ X_s^{-1} \bigr)
\ud s
\\
&\hspace{15pt} = 
{\mathcal U}(t,x,\mu) 
-
\sum_{k \in {\mathbb N}} 
\int_t^T 
Z^k_s(x) 
 (1 \vee k)^\lambda \widehat{\mathcal V}^k(s, \text{\rm Leb}_{\mathbb S} \circ X_s^{-1} )
\ud s
+ \sum_{k \in {\mathbb N}} \int_t^T Z^k_s(x) 
\ud \widetilde 
B_s^k.
\end{split} 
\end{equation*} 
(The second term on the right-hand side is well-defined thanks to 
\eqref{eq:enegery:girsanov})
Here, we make use of Lemma
\ref{lem:2:5}. 
Taking 
inner product with a function $\theta \in L^2({\mathbb S})$ and then 
expectation 
on both sides (under ${\mathbb Q}$), we get 
\begin{equation*} 
\begin{split}
\bigl\langle
\theta, 
{\mathcal U}(t,\cdot,\mu)
\bigr\rangle_2
&= {\mathbb E}^{\mathbb Q} \biggl[ 
\int_{\mathbb S} 
\theta(x) 
\partial_x g \bigl( X_T(x),{\rm Leb}_{\mathbb S} \circ X_T^{-1} \bigr)
\ud x
\\
&\hspace{15pt} 
+ \int_t^T
\biggl( \int_{\mathbb S} 
\theta(x) 
\partial_x f 
 \bigl( X_s(x),{\rm Leb}_{\mathbb S} \circ X_s^{-1} \bigr)
\ud x \biggr) \ud s
\\
&\hspace{15pt} +
\sum_{k \in {\mathbb N}} 
\int_t^T
\bigl\langle \theta,
\psi^k(s,\cdot, \text{\rm Leb}_{\mathbb S} \circ X_s^{-1} ) 
\bigr\rangle_2
 (1 \vee k)^\lambda \widehat{\mathcal V}^k(s, \text{\rm Leb}_{\mathbb S} \circ X_s^{-1} )
\ud s
\biggr].
\end{split}
\end{equation*} 
And then,
\begin{equation*} 
\begin{split}
\bigl\langle \theta, {\mathcal U}(t,\cdot,\mu)
\bigr\rangle_2
&= {\mathbb E} \biggl[ \int_{\mathbb S} 
\theta(x) \partial_x g \bigl( X_T^0(x),{\rm Leb}_{\mathbb S} \circ (X_T^0)^{-1} \bigr)
\ud x
\\
&\hspace{15pt} 
+ \int_t^T
\biggl( 
\int_{\mathbb S} 
\theta(x) 
\partial_x f 
 \bigl( X^0_s(x),{\rm Leb}_{\mathbb S} \circ (X^0_s)^{-1} \bigr)
\ud x \biggr) \ud s 
\\
&\hspace{15pt} +
\sum_{k \in {\mathbb N}} 
\int_t^T
\bigl\langle \theta, 
\psi^k(s,\cdot, \text{\rm Leb}_{\mathbb S} \circ (X^0)_s^{-1} ) 
\bigr\rangle_2
 (1 \vee k)^\lambda \widehat{\mathcal V}^k(s, \text{\rm Leb}_{\mathbb S} \circ (X^0)_s^{-1} )
\ud s
\biggr].
\end{split}
\end{equation*} 
We justify that the last term is well-defined. 
By Lemma 
\ref{lem:2:4}, we 
indeed have, for any $X \in U^2({\mathbb S})$,  
\begin{equation*} 
\begin{split} 
\sum_{k \in {\mathbb N}}
 (1 \vee k)^{2\lambda}
 \bigl\langle 
 \theta, 
\psi^k(s,\cdot,\text{\rm Leb}_{\mathbb S} \circ X^{-1} )
\bigr\rangle_2^2  
&\leq 
\sup_{t \leq s \leq T}
{\rm Lip} \biggl[ \int_{\mathbb S} \theta(x)  \, {\mathcal U}(s,x,\cdot)  \ud x \biggr]^2
\\
&\leq 
C 
\| \theta \|_2^2 ,  
\end{split} 
\end{equation*}
with the last line following from the fact that 
${\mathcal U}$ belongs to ${\mathcal C}$. 
This allows us to 
let
\begin{equation*} 
 \Psi^\theta  \bigl(s,y,
{\rm Leb}_{\mathbb S} \circ 
X^{-1} 
 \bigr) 
 := \sum_{k \in {\mathbb N}} 
 (1 \vee k)^\lambda  
 \bigl\langle \theta, 
\psi^k(s,\cdot,\text{\rm Leb}_{\mathbb S} \circ X^{-1} )
\bigr\rangle_2
 e_k(y), \quad y \in {\mathbb S}. 
\end{equation*} 
As a function of $y$, it belongs to $L^2({\mathbb S})$. 
Moreover, 
\begin{equation*} 
\int_{\mathbb S}
\bigl\vert 
 \Psi^\theta\bigl(s,y,{\rm Leb}_{\mathbb S} \circ 
X^{-1} \bigr) 
\bigr\vert^2 \ud y 
 \leq 
\sup_{t \leq s \leq T}
{\rm Lip} \biggl[ \int_{\mathbb S} \theta(x) \,  {\mathcal U}(s,x,\cdot)  \ud x \biggr]^2.
\end{equation*} 
This completes the proof.
\end{proof} 

We now turn to the proof of the main statement. 
\begin{thm}
\label{main:thm}
Under the assumption of Theorem 
\ref{main statement}, there exists a unique field ${\mathcal U} \in {\mathcal C}$
solving the fixed point ${\mathcal U}=\Phi({\mathcal U})$.  
\end{thm}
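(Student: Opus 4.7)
The strategy is to bootstrap the short-time existence/uniqueness of Proposition \ref{prop:2:2} to the full interval $[0,T]$ by a backward-in-time iteration, controlled by an \emph{a priori} Lipschitz estimate on any fixed point of $\Phi$ that is extracted from Lemma \ref{lem:phi} combined with the smoothing properties of the driftless semigroup $\mathscr{P}^0$ established in \cite{delarueHammersley2022rshe}.

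First I would verify that any fixed point $\mathcal{V} = \Phi(\mathcal{V})$ in $\mathcal{C}$ automatically satisfies the gradient hypothesis of Lemma \ref{lem:phi}. Differentiating the representation \eqref{eq:eq:U} in $x$, using the boundedness and Lipschitz character of $\partial_x f$ and $\partial_x g$, and invoking the gradient estimate \eqref{eq:main:existence:uniqueness} for $(X_r)_{s \leq r \leq T}$ initialised at time $s$ from $X \in U^2(\mathbb{S})$, one obtains $\|\nabla_x \mathcal{V}(s,\cdot,\mathrm{Leb}_{\mathbb{S}} \circ X^{-1})\|_2^2 \leq C + C(T-s)^{-1}\|X\|_2^2$, as required. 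Second, with Lemma \ref{lem:phi} at my disposal, I would bound, for each $\theta \in L^2(\mathbb{S})$ with $\|\theta\|_2 \leq 1$, the Lipschitz constant of $\mu \mapsto \langle \theta, \mathcal{U}(t,\cdot,\mu)\rangle_2$. The terms carrying $\partial_x g$ and $\partial_x f$ are directly Lipschitz in $\mu$ through the assumed Lipschitz properties combined with the $L^2$-stability of $(X_s^0)_{s \geq t}$ in its initial condition. The crucial term is the $\mathcal{V} \cdot \Psi^\theta$ contribution, for which only boundedness of the integrand as a function of the measure is available; here I would invoke the smoothing estimate of $\mathscr{P}^0_h$ from \cite{delarueHammersley2022rshe}, which maps bounded functions into Lipschitz ones with Lipschitz constant of order $h^{-\alpha}$ for some $\alpha \in (0,1)$ determined by the colour exponent $\lambda \in (1/2,1)$. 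Combining with the $L^2$-control of $\Psi^\theta$ by the Lipschitz constant of $\mathcal{U}$ itself yields a singular Gronwall inequality of the form
\begin{equation*}
L(t) \leq K_1 + K_2 \int_t^T (s-t)^{-\alpha} \Bigl( \sup_{s \leq r \leq T} L(r) \Bigr)\, ds,
\end{equation*}
where $L(t) := \sup_{\|\theta\|_2 \leq 1} \mathrm{Lip}_\mu[\langle \theta, \mathcal{U}(t,\cdot,\mu)\rangle_2]$. Since $\alpha < 1$, the singular kernel is integrable and a backward iteration closes the inequality into a uniform bound $\sup_{t \in [0,T]} L(t) \leq K_T$, independent of the particular fixed point chosen in $\mathcal{C}$.

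Third, with this a priori estimate in hand, I would run a backward-in-time iteration on $[0,T]$ built from Proposition \ref{prop:2:2}. Fixing a step $\tau$ small enough in terms of $K_T$ and starting from the explicit terminal value $\mathcal{U}(T,x,\mu) = \partial_x g(F_\mu^{-1}(x),\mu)$ (Lipschitz in $\mu$ by the assumptions on $g$), the short-time contraction produces the unique fixed point on $[T-\tau, T]$; the a priori Lipschitz bound of the previous step keeps $\mathcal{U}(T-\tau,\cdot,\cdot)$ inside the admissible class $\mathcal{C}_c$, so Proposition \ref{prop:2:2} applies again on $[T-2\tau, T-\tau]$, and so on until $0$ is reached. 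Pasting the sub-interval fixed points produces the sought $\mathcal{U} \in \mathcal{C}$, with global uniqueness transferred from uniqueness on each sub-interval.

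The main obstacle is the singular Gronwall closure of the second step: it is precisely the conjunction of the smoothing exponent $\alpha < 1$ for $\mathscr{P}^0$ (which forces the colour restriction $\lambda \in (1/2,1)$) with the bilinear $\mathcal{V} \cdot \Psi^\theta$ structure highlighted in Lemma \ref{lem:phi} that makes the kernel integrable. Heuristically, this is the only place where the intrinsic regularization-by-noise effect of the coloured reflected SHE is genuinely exploited, and the reason the argument is currently confined to the one-dimensional setting.
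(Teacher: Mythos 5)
Your proposal follows essentially the same route as the paper: check that any fixed point satisfies the gradient hypothesis of Lemma \ref{lem:phi} via Theorem \ref{thm:main:existence:uniqueness}, use the representation of Lemma \ref{lem:phi} together with the smoothing bound ${\rm Lip}({\mathscr P}^0_t\Phi)\leq C t^{-(1+\lambda)/2}\|\Phi\|_\infty$ to close a singular (integrable-kernel) Gronwall inequality for the Lipschitz constant in $\mu$, and then propagate the short-time contraction of Proposition \ref{prop:2:2} backward in uniform steps. The only cosmetic difference is that you treat the $\partial_x f$ and $\partial_x g$ terms by direct Lipschitz stability rather than through the smoothing estimate (which in the paper produces the harmless $(T-t)^{-(1+\lambda)/2}$ term); this does not change the argument.
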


\begin{proof}
The principle of the construction is well known, see \cite{Delarue2002209}. 
The point is to start from the terminal time $T$ and to construct ${\mathcal U}$ on the interval $[T-\delta,T]$, for some $\delta >0$, 
by means of 
Proposition 
\ref{prop:2:2}
and then to iterate. 

The challenge is to control the length 
$\delta$ along the iteration. 
As made clear in the statement of Proposition 
\ref{prop:2:2}, this is the same as controlling the 
Lipschitz constant 
of ${\mathcal U}$ (in the variable $\mu$, with the function ${\mathcal U}$ being seen as a mapping from $[0,T] \times U^2({\mathbb S})$ into $U^2({\mathbb S}) \subset L^2({\mathbb S})$) along the induction. 
Assuming that ${\mathcal U}$ solves the fixed point on some interval $[t,T]$, for $t \in (0,T)$, we then need to prove an \textit{a priori} 
bound for the Lipschitz constant of ${\mathcal U}$ in $\mu$ that is uniform with respect to the time argument in $[t,T]$.
For the sake of clarity, notice that the Lipschitz constant that needs to be controlled is 
\begin{equation}
\label{eq:Lip:U2:L2}
{\rm Lip}_{L^2({\mathbb S})} {\mathcal U}(s,\cdot,\cdot) 
:=
\sup_{\mu,\nu \in {\mathcal P}_2({\mathbb R}) : \mu \not = \nu}
{\mathbb W}_2(\mu,\nu)^{-1} \biggl( 
\int_{\mathbb S} 
\bigl\vert 
 {\mathcal U}(s,x,\mu)
-
 {\mathcal U}(s,x,\nu)
 \bigr\vert^2 
  \ud x 
\biggr)^{1/2},
\end{equation} 
which is indeed the Lipschitz constant of ${\mathcal U}(s,\cdot,\cdot)$, seen as a function from 
$U^2({\mathbb S})$ into $L^2({\mathbb S})$.

To proceed, we indeed apply Lemma
\ref{lem:phi} and we use 
the regularization
property proven in \cite{delarueHammersley2022rshe}
for the dynamics without drift $X^0$ (see the paragraph ``Notations'' in Introduction). 
Also, we make use of the notation introduced in the beginning of Subsection 
\ref{subse:3:3}. Whenever 
${\mathcal U}$ is a fixed point of the mapping $\Phi$ (with the latter being restricted to the time interval $[t,T]$ in an obvious manner), 
we then observe from Theorem 
\ref{thm:main:existence:uniqueness} 
that 
${\mathcal U}$ satisfies the main condition in the statement of Lemma 
\ref{lem:phi}. To do so, 
it suffices to combine the representation 
of ${\mathcal U}=\Phi({\mathcal U})$
with the first bound in 
Theorem 
\ref{thm:main:existence:uniqueness} (written with 
$(s,X)$ instead of $(0,X_0)$ as initial condition). We get, for $s \in [t,T]$ and $X \in U^2({\mathbb S})$, 
\begin{equation*} 
\begin{split}
\bigl\| \nabla_x {\mathcal U} \bigl(s,\cdot,{\rm Leb}_{\mathbb S} \circ X^{-1} 
\bigr)\bigr\|_2 
&\leq 
{\mathbb E} \biggl[ \| \nabla X_T
\|_2
+
\int_s^T 
\|\nabla X_r
\|_2 
\ud r
\biggr]
 \\
 &\leq C + \frac{C}{\sqrt{T-s}}  \| X \|_2 + \int_s^T \frac{C}{\sqrt{r-s}}   \| X \|_2 \ud r
 \leq C+ \frac{C}{\sqrt{T-s}}  \| X \|_2,
\end{split} 
\end{equation*} 
with $C$ depending on $T$ and ${\mathcal V}$. 
By the way, we also notice that we have a bound for $\| {\mathcal U} \|_\infty$ (in terms of known parameters), thanks
to 
\eqref{eq:eq:U}. 
%
%

Invoking 
Lemma \ref{lem:phi}
and 
recalling that $({\mathscr P}^0_s)_{s \geq 0}$ denotes the semi-group generated by 
$X^0$, we obtain, for any function $\theta \in L^2({\mathbb S})$, 
\begin{equation}
\label{eq:equation:mild:X0} 
\begin{split}
\bigl\langle \theta, {\mathcal U}(t,\cdot,\mu) \bigr\rangle_2&= {\mathscr P}^0_{T-t} \Bigl(
  {\mathcal G}^\theta(\cdot)  
 \Bigr) 
+ 
\int_t^T 
{\mathscr P}^0_{s-t} 
\Bigl(
  {\mathcal F}^\theta(\cdot,\mu)  
 \Bigr) 
 \ud s
\\
&\hspace{15pt} +
\int_t^T 
{\mathscr P}^0_{s-t} 
\biggl( 
\int_{\mathbb S} 
{\mathcal U} \bigl( s, y, 
\cdot
\bigr) 
\Psi^\theta  (s,y,
\cdot)
 \ud y \biggr)  \ud s,  
 \end{split} 
\end{equation} 
with the notation
\begin{equation*} 
  {\mathcal F}^\theta(\mu)  
= \int_{\mathbb S} \theta(x) 
\partial_x f \bigl( F_{\mu}^{-1}(x), \mu \bigr) \ud x, 
\quad 
  {\mathcal G}^\theta(\mu)  
= \int_{\mathbb S} \theta(x) 
\partial_x g \bigl( F_{\mu}^{-1}(x), \mu \bigr) \ud x.
\end{equation*}

We then apply 
Theorem 5.11 in 
\cite{delarueHammersley2022rshe}, which says that there exists a constant $C>0$ such that, for any bounded measurable function 
$\Phi : {\mathcal P}_2({\mathbb R}) \rightarrow {\mathbb R}$ and any $t \in (0,T]$, 
\begin{equation}
\label{eq:regularisation} 
{\rm Lip} \bigl( {\mathscr P}_t^0 \Phi \bigr) \leq \frac{C}{t^{(1+\lambda)/2}} \| \Phi \|_\infty,
\end{equation}
where ${\rm Lip}$ is used to denote the Lipschitz constant of $\Phi$ (as usual ${\mathcal P}_2({\mathbb R})$ is equipped with ${\mathbb W}_2$). 
Then,
inserting 
\eqref{eq:regularisation} 
into
\eqref{eq:equation:mild:X0}, we obtain 
(for a constant $C$ only depending 
on known parameters) 
\begin{equation*} 
\begin{split} 
{\rm Lip}\Bigl[ 
\bigl\langle \theta, {\mathcal U}(t,\cdot,\cdot) \bigr\rangle_2
\Bigr] 
&\leq
C \| \theta \|_1
+
 \frac{C \| \theta \|_1}{(T-t)^{(1+\lambda)/2}}
\\
&\hspace{15pt} + C
 \int_t^T 
\frac{1}{(s-t)^{(1+\lambda)/2}} 
\sup_{\mu \in {\mathcal P}_2({\mathbb R})}
\biggl( 
\Bigl\vert 
\int_{\mathbb S} 
{\mathcal U} \bigl( s, y, 
\mu
\bigr) 
\Psi^\theta  (s,y,
\mu)
 \ud y 
 \Bigr\vert \biggr)  
\ud s. 
\end{split}
\end{equation*} 
Above, 
${\rm Lip} [ \langle \theta, {\mathcal U}(t,\cdot,\cdot) \rangle_2]$ denotes the Lipschitz constant of $\mu \mapsto \langle \theta, {\mathcal U}(t,\cdot,\mu) \rangle_2$, seen as a function from 
${\mathcal P}_2({\mathbb R})$ to ${\mathbb R}$. 
Using Lemma 
\ref{lem:phi} to get a bound for
 $y \mapsto 
\int_{\mathbb S} 
\vert \Psi^\theta(s,y,\mu) \vert^2 \ud y $, we obtain 
(the value of the constant $C$ being allowed to vary from line to line) 
\begin{equation*} 
\begin{split}
&{\rm Lip}  \Bigl[ 
\bigl\langle \theta, {\mathcal U}(t,\cdot,\cdot) \bigr\rangle_2
\Bigr]
\leq
 \frac{C \| \theta \|_1}{(T-t)^{(1+\lambda)/2}}
+ C
 \int_t^T 
\frac1{(s-t)^{(1+\lambda)/2}} 
\sup_{s \leq r \leq T} 
{\rm Lip}
 \Bigl[ 
\bigl\langle \theta, {\mathcal U}(r,\cdot,\cdot) \bigr\rangle_2
\Bigr]
\ud s.
\end{split}
\end{equation*}
And then, for any 
$S$ fixed $(t,T)$, 
\begin{equation}
\label{eq:induction:delta} 
\begin{split}
{\rm Lip} \Bigl[ 
\bigl\langle \theta, {\mathcal U}(t,\cdot,\cdot) \bigr\rangle_2
\Bigr]
&\leq \frac{C \| \theta \|_1}{(T-t)^{(1+\lambda)/2}}
+ C (S-t)^{(1-\lambda)/2}
\sup_{t \leq s \leq S} 
{\rm Lip}
\Bigl[  \bigl\langle \theta, {\mathcal U}(s,\cdot,\cdot) \bigr\rangle_2 \Bigr]
\\
&\hspace{15pt} + C \sup_{S \leq s \leq T} 
{\rm Lip}
\Bigl[  \bigl\langle \theta, {\mathcal U}(s,\cdot,\cdot) \bigr\rangle_2 \Bigr].
\end{split}
\end{equation}
Importantly, $C$ is independent of $t$ and $\theta$. 

Next, we
recall 
\eqref{eq:Lip:U2:L2}
and we notice that 
\begin{equation}
\label{eq:induction:delta:22} 
{\rm Lip} \Bigl[ 
\bigl\langle \theta, {\mathcal U}(s,\cdot,\cdot) \bigr\rangle_2
\Bigr]
\leq \| \theta \|_2
{\rm Lip}_{L^2({\mathbb S})} {\mathcal U}(s,\cdot,\cdot). 
\end{equation} 
We then choose $\delta$ such that $C \delta^{(1-\lambda)/2}  \leq 1/2$. 
Assume also that we have a bound $C_\ell$, $\ell \geq 1$, for 
$\sup_{T-\ell \delta \leq s \leq T} 
[{\rm Lip}_{L^2({\mathbb S})}  {\mathcal U}(s,\cdot,\cdot) ]$. 
By Lemma 
\ref{lem:U} and by 
\eqref{eq:induction:delta:22}, we already have a bound for $C_1$ that only depends on the 
properties of $f$ and $g$, provided 
$\delta$ is small enough. 
If $\ell \delta <T-t$, 
then, for any $s \in [T- (\ell+1)\delta,T - \ell \delta]$, $s\geq t$, 
\eqref{eq:induction:delta}
yields
\begin{equation*} 
\sup_{\| \theta \|_2 \leq 1} 
{\rm Lip} \Bigl[ 
\bigl\langle \theta, {\mathcal U}(s,\cdot,\cdot) \bigr\rangle_2
\Bigr]
  \leq \frac{C}{\delta^{(1+\lambda)/2}}
+ \frac12 
\sup_{s \leq r \leq T-\ell \delta} 
\sup_{\| \theta \|_2 \leq 1} 
{\rm Lip} \Bigl[ 
\bigl\langle \theta, {\mathcal U}(r,\cdot,\cdot) \bigr\rangle_2 \Bigr] + C C_\ell.
\end{equation*}
Taking the supremum over $s \in [t \vee (T-(\ell+1)\delta),T-\ell \delta]$, 
 we get a  bound $C_{\ell+1}$ for
$$\sup_{t \vee (T-(\ell+1) \delta) \leq s \leq T} 
\sup_{\| \theta \|_2 \leq 1} 
{\rm Lip} \Bigl[ 
\bigl\langle \theta, {\mathcal U}(s,\cdot,\cdot) \bigr\rangle_2
\Bigr].$$
Now, it is standard to prove that 
$$\sup_{\| \theta \|_2 \leq 1} 
{\rm Lip} \Bigl[ 
\bigl\langle \theta, {\mathcal U}(s,\cdot,\cdot) \bigr\rangle_2
\Bigr]
=
{\rm Lip}_{L^2({\mathbb S})} {\mathcal U}(s,\cdot,\cdot).$$ 
 In this way, we get an \textit{a priori} bound for the Lipschitz constant 
 of $ {\mathcal U}$, seen as a function from 
$U^2({\mathbb S})$ into $L^2({\mathbb S})$, along its iterative backward construction. 
\end{proof}

\subsection{Existence and uniqueness of a mean field equilibrium (first part of Theorem \ref{main statement})}

Since we already have 
proven
Theorem \ref{main:thm}, it remains to establish 
Proposition 
\ref{prop:fixed:point} in order to prove the existence and uniqueness part in 
Theorem 
\ref{main statement}.

\begin{proof}[Proof of Proposition 
\ref{prop:fixed:point}]

The fact that a fixed point to $\Phi$ induces a mean field equilibrium
is a consequence of Lemma 
\ref{lem:martingale} and then Proposition 
\ref{prop:2:7:new}.  

We then need to prove that uniqueness of 
a fixed point to $\Phi$ implies uniqueness of the mean field equilibrium. 
In addition
to a fixed point ${\mathcal U}$, we consider another equilibrium ${\mathcal U}'$ (as in Definition 
\ref{def:mathcalC}). 
We call $(X_t')_{0 \le t \le T}$ 
the solution 
to
\eqref{eq:V} driven by ${\mathcal U}'$. 
By Proposition 
\ref{prop:2:7:new}, we know that, 
for almost every $t \in [0,T]$,
${\mathbb P}$-almost surely, 
for all $x \in {\mathbb S}$,
\begin{equation}
\label{eq:pontryagin:2}
\begin{split}
{\mathcal U}'\bigl(t,x,{\rm Leb}_{\mathbb S} \circ (X_t')^{-1} 
\bigr) &= {\mathbb E} \Bigl[ \partial_x g \Bigl( X_T'(x), {\rm Leb}_{\mathbb S} \circ (X_T')^{-1} \Bigr) 
\\
&\hspace{15pt} 
+ \int_t^T
\partial_x f \bigl( X_s'(x),{\rm Leb}_{\mathbb S} \circ (X_s')^{-1} \bigr)
\ud s
\, \vert \, {\mathcal F}_t \Bigr]. 
\end{split}
\end{equation} 
Considering as before 
a regular conditional probability $({\mathbb P}_{t,\omega})_{\omega \in \Omega}$ of ${\mathbb P}$ given the $\sigma$-field generated by 
$\sigma(W_r ; r \leq t, x \in {\mathbb S})$, we have that, $\omega$ ${\mathbb P}$-almost surely, under ${\mathbb P}_{t,\omega}$, $(X_r')_{s \leq r \leq T}$ can be regarded as the unique solution of \eqref{eq:V} driven by ${\mathcal U}'$ and initialized from 
$(t,X_t'(\omega))$. By expressing the above conditional expectation through the regular conditional probability, 
we obtain:
\begin{equation*} 
\begin{split}
{\mathcal U}'\bigl(t,x,{\rm Leb}_{\mathbb S} \circ (X_t'(\omega))^{-1} 
\bigr) &= 
\int_{\Omega} 
\partial_x g \bigl( 
X_T'(\omega')(x),
{\rm Leb}_{\mathbb S} \circ (X_T'(\omega'))^{-1} 
\bigr)
\ud {{\mathbb P}_{t,\omega}}(\omega')
\\
&\hspace{15pt} + 
\int_t^T \int_{\Omega} 
\partial_x f \bigl( 
X_s'(\omega')(x),
{\rm Leb}_{\mathbb S} \circ (X_s'(\omega'))^{-1} 
\bigr)
\ud {{\mathbb P}_{t,\omega}}(\omega') \, \ud s.
\end{split}
\end{equation*}
Under the same probability measure, we can write
\begin{equation*} 
\begin{split} 
{\mathcal U}\bigl(t,x,{\rm Leb}_{\mathbb S} \circ (X_t'(\omega))^{-1} 
\bigr) &= 
\int_{\Omega} 
\partial_x g \bigl( 
X_T(\omega')(x),
{\rm Leb}_{\mathbb S} \circ (X_T(\omega'))^{-1} 
\bigr)
d{{\mathbb P}_{t,\omega}}(\omega')
\\
&\hspace{15pt} + 
\int_t^T \int_{\Omega} 
\partial_x f \bigl( 
X_s(\omega')(x),
{\rm Leb}_{\mathbb S} \circ (X_s(\omega'))^{-1} 
\bigr)
\ud {{\mathbb P}_{t,\omega}}(\omega') \, \ud s,
\end{split} 
\end{equation*}where $(X_s)_{t \leq s \leq T}$ is the solution to \eqref{eq:V} driven by ${\mathcal U}$ and initialized from 
$(t,X_t'(\omega))$. 
Repeating the proof of Lemma 
\ref{lem:U}, we obtain that, 
for $T-t \leq C_{\mathcal U}/2$, 
${\mathcal U}(t,\cdot,\cdot)$ and ${\mathcal U}'(t,\cdot,\cdot)$ should coincide. 
Restarting the analysis from time $T-C_{\mathcal U}/2$, with ${\mathcal U}(T-C_{\mathcal U}/2,x,\mu)$ playing the role of 
$\partial _xg(X(x),\mu)$ for $X \sim \mu$, we obtain 
the identification of ${\mathcal U}$ and ${\mathcal U}'$ on 
$[T-2\times C_{\mathcal U}/2,T]$. Arguing inductively, we can identify ${\mathcal U}$ and ${\mathcal U}'$ on the interval $[0,T]$, which 
proves that there is a unique mean field equilibrium, given by ${\mathcal U}$. 
\end{proof} 

\section{Representation of ${\mathcal U}$}
\label{sec:intrinsic} 

Let ${\mathcal U}$ be given by Theorem 
\ref{main:thm} (equivalently, by the first part of Theorem \ref{main statement}). Solve 
\eqref{eq:V}
with $0$ as initial time (and with some initial state $\mu \in {\mathcal P}_2({\mathbb R})$). 

The purpose of this section is to show 
the second 
part in 
Theorem \ref{main statement}: along the 
path $(X_t(\cdot))_{0 \le t \le T}$ (which takes values in $U^2({\mathbb S})$), 
 the field 
${\mathcal U}$ can be represented in a mean-field form, namely, for 
almost every $\omega \in \Omega$ and 
almost every $t \in [0,T]$, 
 ${\mathcal U}(t,x,{\rm Leb}_{\mathbb S} \circ X_t^{-1})$ can be 
 written in the form 
\begin{equation}
\label{eq:tildeU} 
{\mathcal U}\bigl(t,x,{\rm Leb}_{\mathbb S} \circ X_t^{-1}\bigr) = \tilde {\mathcal U}(t,X_t(x),{\rm Leb}_{\mathbb S} \circ X_t^{-1}),
\end{equation} 
for a measurable mapping $\tilde {\mathcal U} : [0,T] \times {\mathbb R} \times {\mathcal P}_2({\mathbb R}) \rightarrow {\mathbb R}$. 

\subsection{Formula when $X$ is invertible} 
The proof of 
\eqref{eq:tildeU} relies on the following generic principle. 
\begin{lem}
\label{lem:X:invertible}
Let $X \in U^2({\mathbb S})$ be a continuous non-decreasing mapping such that 
\begin{equation}
\label{eq:lem:4:1} 
\int_{\mathbb S} \int_{\mathbb S} {\mathbf 1}_{\{ X(x) = X(y)\}} \ud x \, \ud y =0. 
\end{equation} 
Then, the function $X : [0,1/2] \rightarrow {\mathbb R}$ is one-to-one from $[0,1/2]$ on its image $X({\mathbb S})$. Denoting 
by 
$X^{-1}$ the inverse (which is measurable), it holds, for any $t \in [0,T]$, 
\begin{equation*} 
{\mathcal U}(t,x,{\rm Leb}_{\mathbb S} \circ X^{-1}) 
= \tilde {\mathcal U} \bigl(t,X(x),{\rm Leb}_{\mathbb S} \circ X^{-1}\bigr), 
\end{equation*} 
with 
\begin{equation*} 
\tilde {\mathcal U}(t,y,{\rm Leb}_{\mathbb S} \circ X^{-1})
= 
\left\{ 
\begin{array}{ll}
{\mathcal U} \bigl(t,X^{-1}(y),{\rm Leb}_{\mathbb S} \circ X^{-1}\bigr) &\quad y \in X({\mathbb R})
\\
{\mathcal U} \bigl(t,0,{\rm Leb}_{\mathbb S} \circ X^{-1}\bigr) 
 &\quad y < X(0)
\\
{\mathcal U} \bigl(t,1/2,{\rm Leb}_{\mathbb S} \circ X^{-1}\bigr) 
 &\quad y > X(1/2)
\end{array}
\right..
\end{equation*} 
\end{lem}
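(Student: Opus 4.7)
The plan is to treat the lemma as two essentially independent observations, the first geometric and the second notational.

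\textbf{Step 1: injectivity.} First I would derive strict monotonicity of $X$ on $[0,1/2]$ from hypothesis~\eqref{eq:lem:4:1}. Since $X \in U^2(\mathbb{S})$, it is symmetric and non-decreasing on $[0,1/2]$ in the classical sense. If it failed to be strictly increasing on $[0,1/2]$, one could find $0 \le a < b \le 1/2$ with $X$ constant on $[a,b]$, and then
\[
\int_{\mathbb{S}} \int_{\mathbb{S}} \mathbf{1}_{\{X(x) = X(y)\}}\, dx\, dy \;\ge\; (b-a)^2 \;>\; 0,
\]
contradicting \eqref{eq:lem:4:1}. Combined with the continuity assumed on $X$, this turns $X|_{[0,1/2]}$ into a homeomorphism onto the compact interval $[X(0), X(1/2)]$, and its inverse $X^{-1}$ is continuous, hence Borel measurable. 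Using symmetry and $1$-periodicity, one also sees that $X(\mathbb{R}) = X(\mathbb{S}) = [X(0), X(1/2)]$.

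\textbf{Step 2: representation.} Next I would check the pointwise identity. By item~(2) of Definition~\ref{def:mathcalC}, the slice $x \mapsto {\mathcal U}(t,x,\mu)$ lies in $U^2(\mathbb{S})$ and is in particular symmetric about $0$; moreover, $X$ is symmetric, so $X(x)=X(-x)$. Hence the identity ${\mathcal U}(t,x,{\rm Leb}_{\mathbb{S}} \circ X^{-1}) = \tilde{\mathcal U}(t, X(x), {\rm Leb}_{\mathbb{S}} \circ X^{-1})$ on $[-1/2,0]$ reduces to the corresponding statement on $[0,1/2]$. For $x \in [0,1/2]$, one has $X(x) \in X(\mathbb{R})$, so the first branch of the piecewise definition of $\tilde{\mathcal U}$ applies, and the injectivity established in Step~1 gives $\tilde{\mathcal U}(t, X(x), \cdot) = {\mathcal U}(t, X^{-1}(X(x)), \cdot) = {\mathcal U}(t,x,\cdot)$. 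The two remaining branches of $\tilde{\mathcal U}$, on $(-\infty, X(0))$ and $(X(1/2),\infty)$, are never activated by arguments of the form $X(x)$; they serve only to extend $\tilde{\mathcal U}$ into a bona fide measurable function on all of $\mathbb{R}$ (with the extension value chosen to match the boundary of $[0,1/2]$, consistently with the symmetry of ${\mathcal U}(t,\cdot,\mu)$).

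\textbf{Main obstacle.} The lemma itself is essentially bookkeeping once \eqref{eq:lem:4:1} is granted; the only step with any content is Step~1, and even there the argument is immediate once one recalls the symmetric sense in which elements of $U^2(\mathbb{S})$ are monotone. The real substance lies ahead of this lemma rather than inside it: one must ensure that \eqref{eq:lem:4:1} actually holds almost surely along a typical realization $X = X_t$ of the reflected SHE, which should come from combining the almost-sure $H^1$-regularity provided by Theorem~\ref{thm:main:existence:uniqueness} with the absolute-continuity properties inherited from the regularizing estimates of \cite{delarueHammersley2022rshe}.
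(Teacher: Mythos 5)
Your proposal is correct and follows essentially the same route as the paper: strict monotonicity on $[0,1/2]$ is obtained by contradiction from \eqref{eq:lem:4:1} via the $\vert I\vert^2>0$ lower bound, and the representation then follows from $X^{-1}(X(x))=x$ on $[0,1/2]$ together with the symmetry of $X$ and of ${\mathcal U}(t,\cdot,\mu)$ (the latter being in $U^2({\mathbb S})$) to handle $x\in[-1/2,0]$. Your closing remark is also consistent with the paper's structure: the condition \eqref{eq:lem:4:1} along the flow is established separately in the next lemma, not inside this one.
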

\begin{proof}
We know that ${\mathcal U}$ is a non-decreasing continuous function on $[0,1/2]$. Assume that it is not strictly decreasing, meaning that there exist 
$x_1$ and $x_2$ in $[0,1/2]$, with $x_1<x_2$, such that 
${\mathcal U}(x_1)={\mathcal U}(x_2)$. Then, for all $z \in [x_1,x_2]$, ${\mathcal U}(z)={\mathcal U}(x_1)={\mathcal U}(x_2)$. 
Letting $I=[x_1,x_2]$, we have
\begin{equation*} 
\int_{\mathbb S} \int_{\mathbb S} {\mathbf 1}_{\{ X(x) = X(y)\}} \ud x \, \ud y 
\geq 
\int_{I} \int_{I} {\mathbf 1}_{\{ X(x) = X(y)\}} \ud x \, \ud y = \vert I \vert^2 >0, 
\end{equation*}  
which is a contradiction. Therefore, $X$ is continuous one-to-one from $[0,1/2]$ onto its image. 
This makes it possible to define $\tilde {\mathcal U}$ as in the statement. 

Now, for $x \in [0,1/2]$,
\begin{equation*}
\tilde {\mathcal U}\bigl(t,X(x),{\rm Leb}_{\mathbb S} \circ X^{-1}\bigr)= 
{\mathcal U} \bigl(t,X^{-1}(X(x)),{\rm Leb}_{\mathbb S} \circ X^{-1}\bigr)
= {\mathcal U} (t,x,{\rm Leb}_{\mathbb S} \circ X^{-1}). 
\end{equation*} 
When $x \in [-1/2,0]$, we have
\begin{equation*} 
\begin{split}
{\mathcal U} (t,x,{\rm Leb}_{\mathbb S} \circ X^{-1})
=
{\mathcal U} (t,-x,{\rm Leb}_{\mathbb S} \circ X^{-1})
&=
\tilde {\mathcal U}\bigl(t,X(-x),{\rm Leb}_{\mathbb S} \circ X^{-1}\bigr)
\\
&=
\tilde {\mathcal U}\bigl(t,X(x),{\rm Leb}_{\mathbb S} \circ X^{-1}\bigr).
\end{split}
\end{equation*} 
This completes the proof. 
\end{proof}

We recall from 
\eqref{eq:inverse:generalise} that the function $x \in [0,1] \mapsto X(x/2)$ is the quantile function of the 
probability 
measure $\mu := {\rm Leb}_{\mathbb S} \circ X^{-1}$. With $X^{-1}$ defined as in the previous lemma, 
we get 
\begin{equation*} 
\bigl( 2 X^{-1}\bigr) \circ X \bigl( \frac{x}2 \bigr) =  x, \quad x \in [0,1], 
\end{equation*} 
which shows that the function $2X^{-1}$ is the cumulative distribution function of $\mu$, which we denote by 
$F_\mu$. 
Therefore, 
$\tilde {\mathcal U}$ can be rewritten as
\begin{equation}
\label{eq:tildeU:2} 
\tilde  {\mathcal U}(t,y,\mu)
= 
 {\mathcal U} \Bigl(t,\frac{F_\mu(y)}2,\mu \Bigr) \quad y \in  {\mathbb R}.
\end{equation} 
We can prove that the function 
\begin{equation*} 
(t,y,\mu) \mapsto \Bigl(t,\frac{F_\mu(y)}2,\mu \Bigr)
\end{equation*}
is measurable. This shows that $\tilde {\mathcal U}$ is measurable. Indeed, we notice that, for any $a\geq 0$, 
\begin{equation*} 
\begin{split}
&\bigl\{ (\mu,y) \in {\mathcal P}_2({\mathbb R}) \times {\mathbb R} : F_\mu(y) \geq a \bigr\}
\\
&= \bigl\{ (\mu,y) \in {\mathcal P}_2({\mathbb S}) \times {\mathbb R} : \mu \bigl((-\infty,y]\bigr) \geq a \bigr\}
\\
&= 
\bigcap_{n \in {\mathbb N} \setminus \{0\}} 
\bigcup_{z \in {\mathbb Q}} 
 \bigl\{ 
\mu \in {\mathcal P}_2({\mathbb R}) : \mu \bigl( (-\infty,z] \bigr) \geq a \bigr\} 
\times
\big\{ y \in {\mathbb R} : z- \tfrac1{n} \leq y \leq z \bigr\}.
\end{split}
\end{equation*} 
It is easy to see that the set on the last line above belongs to the product of the two $\sigma$-fields on 
${\mathcal P}_2({\mathbb R})$ and ${\mathbb R}$ respectively. 
Importantly, we notice that the definition of $\tilde  {\mathcal U}$
in \eqref{eq:tildeU:2} always makes sense, even if the 
assumption of Lemma 
\ref{lem:X:invertible} is not satisfied. This plays a key role in the sequel. 

Notice also that, 
whenever 
$\mu$ has no atoms, $F_\mu$ is continuous so that 
$\tilde{\mathcal U}$ is continuous in $y$. 
Observe that
so is the case for $\mu = \textrm{\rm Leb}_{\mathbb S} \circ X^{-1}$ when 
$X$
satisfies
\eqref{eq:lem:4:1}.

\subsection{Almost sure invertibility of the solution}

Here is now the second step in the construction of 
a mean field representative of the process $({\mathcal U}(s,x,{\rm Leb}_{\mathbb S} \circ X_s^{-1}))_{0 \leq s \leq T}$. 

\begin{lem}
Let ${\mathcal U}$ be given by Theorem 
\ref{main:thm}. Solve 
\eqref{eq:V}
with $0$ as initial time (and with some initial state $\mu \in {\mathcal P}_2({\mathbb R})$). Then, for almost every 
$\omega \in \Omega$ and for almost every $t \in [0,T]$, 
\begin{equation*} 
\int_{\mathbb S} \int_{\mathbb S} {\mathbf 1}_{\{ X_t(x) = X_t(y)\}} \ud x \,  \ud y =0. 
\end{equation*} 
\end{lem}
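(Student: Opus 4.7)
The strategy has three stages: reduce to the drift-free process $X^0$ via Girsanov, reduce to pairwise non-coincidence via Fubini, and apply an occupation-time argument to the pairwise differences $X^0_t(v)-X^0_t(u)$.

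\emph{Reduction to $X^0$ and to pairs.} The equilibrium field ${\mathcal U}$ from Theorem~\ref{main:thm} satisfies
\(\|\nabla_x {\mathcal U}(s,\cdot,{\rm Leb}_{\mathbb S}\circ X^{-1})\|_2 \leq C + C(T-s)^{-1/2}\|X\|_2\)
for $X \in U^2({\mathbb S})$, as one sees by applying the gradient estimate of Theorem~\ref{thm:main:existence:uniqueness} inside the representation ${\mathcal U}=\Phi({\mathcal U})$. This is precisely the assumption of Lemma~\ref{lem:phi}, and the Novikov--Girsanov computation performed there (see \eqref{eq:enegery:girsanov}) yields equivalence on path space between the laws of $(X_t)_{0 \le t \le T}$ and $(X^0_t)_{0 \le t \le T}$. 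It therefore suffices to prove the lemma for $X^0$. Taking expectations, using Fubini, and exploiting the symmetry $X^0_t(x)=X^0_t(-x)$ to pass to $[0,1/2]^2$, the claim reduces to showing
\[
{\mathbb P}\bigl(X^0_t(u)=X^0_t(v)\bigr)=0 \qquad \textrm{for Lebesgue-a.e. } (t,u,v)\in [0,T]\times \{0\le u<v\le 1/2\}.
\]

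\emph{Occupation-time argument.} Fix such $u<v$ and set $D_t := X^0_t(v)-X^0_t(u)$. Then $D_t\geq 0$ since $X^0_t \in U^2({\mathbb S})$ is non-decreasing on $[0,1/2]$, and $(D_t)_{0\le t\le T}$ is pathwise continuous (by the spatial continuity in Remark~\ref{rem:measurability} combined with the $L^2$-continuity in $t$ of $X^0$). The key claim is that $D$ is a continuous semimartingale whose martingale part has quadratic variation $\sigma^2(u,v)\,t$, with
\[
\sigma^2(u,v) := \sum_{k \ge 1} k^{-2\lambda}\bigl(e_k(v)-e_k(u)\bigr)^2 \in (0,\infty);
\]
the sum converges since $\lambda>1/2$ and is strictly positive because $0\le u<v\le 1/2$ forces $v \not\equiv \pm u \pmod{1}$, so the cosine system separates $u$ and $v$. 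Granting this structure, the occupation-time formula gives
\[
\sigma^2(u,v)\int_0^T {\mathbf 1}_{\{D_t=0\}}\ud t = \int_{\mathbb R} {\mathbf 1}_{\{a=0\}}\,L^a_T(D)\,\ud a = 0 \qquad {\mathbb P}\textrm{-a.s.},
\]
hence ${\mathbb P}(D_t=0)=0$ for almost every $t$, and a final Fubini finishes.

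\emph{Main obstacle.} The technical heart is rigorously establishing the semimartingale decomposition of $D_t$: the reflection $\eta$ lies only in $H^{-2}_{\rm sym}({\mathbb S})$, so the formal test function $\delta_v-\delta_u$ is inadmissible; moreover a mollified version $\phi^\varepsilon_v-\phi^\varepsilon_u$ does not belong to the cone $U^2({\mathbb S})$, so item~(2) of Definition~\ref{def:existence} provides no direct sign for the pairing $\langle \eta_t, \phi^\varepsilon_v-\phi^\varepsilon_u\rangle$. I would work with the spatially smooth approximation $X^{0,\varepsilon}_t := e^{\varepsilon \Delta}X^0_t$, for which the weak formulation of~\eqref{eq:V} yields a bona fide semimartingale identity for $D^\varepsilon_t := X^{0,\varepsilon}_t(v)-X^{0,\varepsilon}_t(u)$ with quadratic variation $\sigma^2_\varepsilon(u,v)\,t \to \sigma^2(u,v)\,t$; apply the occupation-time conclusion at fixed $\varepsilon$ to obtain $\int_0^T{\mathbf 1}_{\{D^\varepsilon_t\leq \delta\}}\ud t \lesssim \delta$ uniformly in $\varepsilon$ (from the density of the local time); and then pass to the limit using the $H^1$-bound of Theorem~\ref{thm:main:existence:uniqueness}, item~(4) of Definition~\ref{def:existence} to control the reflection uniformly in $\varepsilon$, and the pointwise convergence $D^\varepsilon_t \to D_t$ at $u,v$ (granted by the spatial continuity of $X^0_t$ for $t>0$).
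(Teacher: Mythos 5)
Your overall architecture (Girsanov reduction to the driftless process, then a pathwise argument showing that coincidence of values at two labels has zero occupation time) is a genuinely different route from the paper, and the first stage is fine: the gradient bound on ${\mathcal U}$ and the Novikov computation of Lemma \ref{lem:phi} do reduce the statement to $X^0$, exactly as the paper does. But the second stage has a genuine gap, and it sits exactly where you flag the ``main obstacle'': the uniform-in-$\varepsilon$ control of the local time of $D^\varepsilon_t=e^{\varepsilon\Delta}X^0_t(v)-e^{\varepsilon\Delta}X^0_t(u)$ near level $0$ is not justified, and the ingredients you cite do not deliver it. Via Tanaka's formula, a bound of the type $\int_0^T{\mathbf 1}_{\{D^\varepsilon_t\le\delta\}}\,\ud t\lesssim\delta$ uniformly in $\varepsilon$ requires controlling, uniformly in $\varepsilon$, the term $\int_0^T\mathrm{sgn}(D^\varepsilon_t-a)\,\ud A^\varepsilon_t$, where $A^\varepsilon$ is the finite-variation part of $D^\varepsilon$. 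That part contains (i) the Laplacian drift $\langle X^0_r,\Delta\phi_\varepsilon\rangle\,\ud r$ with $\phi_\varepsilon$ the mollified difference of Diracs, which is of size $\|\nabla X^0_r\|_2\,\|\nabla\phi_\varepsilon\|_2$ and blows up as a negative power of $\varepsilon$ because $X^0_r$ is only $H^1$ (the bound of Theorem \ref{thm:main:existence:uniqueness} does not help here), and (ii) the reflection pairing $\langle\eta_t,\phi_\varepsilon\rangle$, for which there is neither a sign (since $\phi_\varepsilon\notin U^2({\mathbb S})$, as you note) nor any uniform-in-$\varepsilon$ total-variation estimate; item (4) of Definition \ref{def:existence} only controls the pairing of $\ud\eta$ with $e^{\varepsilon\Delta}X$ itself, not with external test functions concentrating at a point. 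Without such control you can neither bound the local times uniformly nor even assert that the limit $D_t$ is a semimartingale, so the occupation-time conclusion does not pass to the limit.

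For comparison, the paper avoids pointwise (two-label) semimartingale analysis altogether: after the same Girsanov reduction, it applies the It\^o formula of \cite{delarueHammersley2022ergodicrshe} to the two-point functional $\Xi(\mu)=\tfrac12\int\int\Psi(x-y)\,\ud\mu(x)\ud\mu(y)$ with $\Psi''=\psi$ an approximation of ${\mathbf 1}_{\{|y|\le\varepsilon\}}$; in that formula the reflection gives no harmful contribution, the first-order term is killed by the deterministic observation that $\nabla X_s$ vanishes a.e.\ on each level set (level sets of a monotone $H^1$ profile are intervals on which the derivative is zero), and letting $\varepsilon\to 0$ the second-order (noise) terms yield
$\sum_{k\ge1}k^{-2\lambda}{\mathbb E}\int_0^t\int\int{\mathbf 1}_{\{X_s(x)=X_s(y)\}}\vert e_k(x)-e_k(y)\vert^2\,\ud x\,\ud y\,\ud s=0$,
from which the conclusion follows by the positivity of the trigonometric weights away from $y=\pm x$ together with the interval structure of level sets. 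If you want to salvage your local-time route, you would need a substitute for the companion-paper It\^o formula that gives a sign or a uniform bound for the reflection tested against concentrating kernels; as written, the proposal is a plan with this hole rather than a proof.
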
 

\begin{proof} 
By the same Girsanov transformation as in Lemma 
\ref{lem:phi}, we can in fact prove the result for the driftless equation 
\eqref{eq:V}.
Then, 
for a given $\varepsilon >0$, consider a smooth positive function $\psi : {\mathbb R} \rightarrow {\mathbb R}$ such that 
\begin{equation*} 
 {\mathbf 1}_{\{\vert y \vert \leq \varepsilon\}}
 \leq \psi(y) 
 \leq  {\mathbf 1}_{\{\vert y \vert \leq 2\varepsilon\}}, \quad y \in {\mathbb R}. 
\end{equation*} 
Obviously, we can choose $\psi$ to be even. Also, we can find a function $\Psi$ such that $\Psi''=\psi$. Choosing 
$\Psi(0)=\Psi'(0)=0$, we have 
$\vert \Psi'(x) \vert \leq 2\varepsilon$ for $x \in {\mathbb R}$ and then 
$\vert \Psi(x) \vert \leq 2 \varepsilon \vert x \vert$. Clearly, $\Psi$ is even. 

Next, we consider the following functional
on ${\mathcal P}_2({\mathbb R})$:  
\begin{equation*} 
\Xi(\mu) := \frac12 \int_{{\mathbb R}} \int_{{\mathbb R}} \Psi(x-y) \ud \mu(x) \ud \mu(y), \quad \mu \in {\mathcal P}_2({\mathbb R}). 
\end{equation*} 
Since $\Psi$ is smooth,  $\Xi$ has continuous first and second-order derivatives (with respect to $\mu$). They are given by 
\begin{equation*}
\begin{split} 
&\partial_\mu \Xi(\mu)(x) =  \int_{\mathbb R} 
\Psi'(x-y) \ud \mu(y), 
\\
&\partial_x \partial_\mu \Xi(\mu)(x) =  \int_{\mathbb R} 
\psi(x-y) \ud \mu(y),
\\
&\partial_\mu^2 \Xi(\mu)(x,y) = -
\psi(x-y). 
\end{split} 
\end{equation*} 
We believe that those notations are by now standard. We refer to Chapter 5 in the book 
\cite{CarmonaDelarueI}. 

By It\^o's formula proven in \cite{delarueHammersley2022ergodicrshe}, we obtain 
\begin{equation*}
\begin{split}
{\mathbb E} 
&\Bigl[ \Xi\bigl( {\rm Leb}_{\mathbb S} \circ X_t^{-1}  \bigr) 
-
\Xi\bigl(  {\rm Leb}_{\mathbb S} \circ X_0^{-1}  \bigr) \Bigr]
\\
&= 
  - {\mathbb E} \int_0^t 
\int_{\mathbb S} \int_{\mathbb S} 
\psi \bigl(X_s(x) - X_s(y) \bigr) \bigl[ \nabla X_s(x) \bigr]^2 dx \, \ud y \, ds
\\
&\hspace{15pt} +
\frac12
\sum_{k \geq 1} k^{-2 \lambda} 
{\mathbb E}  \int_0^t  \int_{\mathbb S} 
\int_{\mathbb S} 
\psi \bigl(X_s(x) - X_s(y) \bigr)
e_k^2(x) 
\ud x \, \ud y \, \ud s
    \\
&\hspace{15pt} 
- \frac12 \sum_{k \geq 1} k^{-2\lambda}  {\mathbb E}   \int_0^t \int_{\mathbb S}\int_{\mathbb S}
\psi \bigl(X_s(x) - X_s(y) \bigr)
e_k(x) e_k(y) 
\ud x \, \ud y \, \ud s,
\end{split}
\end{equation*}
which yields 
\begin{equation*}
\begin{split}
&\frac14
\sum_{k \geq 1} k^{-2\lambda} 
{\mathbb E}  \int_0^t  \int_{\mathbb S} 
\int_{\mathbb S} 
{\mathbf 1}_{\{ X_s(x) = X_s(y) \}} 
\bigl\vert
e_k(x) 
-
e_k(y) 
\bigr\vert^2  \ud x \, \ud y \, \ud s
\\
&\leq C \varepsilon +  {\mathbb E} \int_0^t 
\int_{\mathbb S} \int_{\mathbb S} 
{\mathbf 1}_{\{ \vert X_s(x) - X_s(y) \vert \leq 2 \varepsilon \}} 
 \bigl[ \nabla X_s(x) \bigr]^2 \ud x \, \ud y \, \ud s.
\end{split}
\end{equation*}
Letting $\varepsilon$ tend to $0$, we get 
\begin{equation}
\label{eq:epsilon:tends:to:0}
\begin{split}
&\frac14
\sum_{k \geq 1} k^{-2 \lambda} 
{\mathbb E}  \int_0^t  \int_{\mathbb S} 
\int_{\mathbb S} 
{\mathbf 1}_{\{ X_s(x) = X_s(y) \}} 
\bigl\vert
e_k(x) 
-
e_k(y) 
\bigr\vert^2 \ud x \, \ud y \, \ud s
\\
&\leq
{\mathbb E} \int_0^t 
\int_{\mathbb S} \int_{\mathbb S} 
{\mathbf 1}_{\{ X_s(x) = X_s(y) \}} 
 \bigl[ \nabla X_s(x) \bigr]^2 \ud x \, \ud y \, \ud s.
\end{split}
\end{equation}
We claim that the last line on the right-hand side is equal to $0$. Take indeed $y \in {\mathbb S}$ and consider, for an element $h \in H^1({\mathbb S})
 \cap U^2({\mathbb S})$, the integral 
 \begin{equation*} 
 \int_{\mathbb S} 
{\mathbf 1}_{\{  h(x) = h(y) \}} 
 \bigl[ \nabla h(x) \bigr]^2 \ud x.
 \end{equation*} 
 We claim that it is zero.
 Without any loss of generality, we can assume (by symmetry and periodicity) that 
 $y \in [0,1/2]$. Then, 
 the set $\{ x \in {\mathbb S} : h(x)=h(y)\}$ must be made of one or two intervals since
 $h$ is non-decreasing on $[0,1/2]$
 and non-increasing on $[-1/2,0]$.
When there are two intervals, they are obtained one from each other by means of the mapping 
$x \mapsto -x$.  
  We then call 
$I$ the interval containing at least one non-negative real. 
If the length of $I$ is zero, then the proof of the claim is done, i.e. 
the above integral is indeed zero. If not, 
we write
 \begin{equation*} 
 \int_{\mathbb S} 
{\mathbf 1}_{\{  h(x) = h(y) \}} 
 \bigl[ \nabla h(x) \bigr]^2 \ud x
 \leq 
 \int_I
{\mathbf 1}_{\{  h(x) = h(y) \}} 
 \bigl[ \nabla h(x) \bigr]^2 \ud x +  \int_{-I}
{\mathbf 1}_{\{  h(x) = h(y) \}} 
 \bigl[ \nabla h(x) \bigr]^2 \ud x,
 \end{equation*} 
 but we observe that, on the interior of $I$, $\nabla h$ must be zero (and similarly on $-I$). This completes the proof the claim. Back to 
 \eqref{eq:epsilon:tends:to:0}, we obtain
 \begin{equation*}
\begin{split}
&\frac14
\sum_{k \geq 1} k^{-2\lambda} 
{\mathbb E}  \int_0^t  \int_{\mathbb S} 
\int_{\mathbb S} 
{\mathbf 1}_{\{ X_s(x) = X_s(y) \}} 
\bigl\vert
e_k(x) 
-
e_k(y) 
\bigr\vert^2 \ud x \, \ud y \, \ud s
=0.
\end{split}
\end{equation*}
Recalling that $e_k(x) = \sqrt{2} \cos(2 \pi k x)$,  
this may be rewritten as
\begin{equation*}
\begin{split}
\sum_{k \geq 1}  k^{-2\lambda} 
{\mathbb E} \int_0^t \int_{\mathbb S} \int_{\mathbb S}  {\mathbf 1}_{\{ \vert X_s(x) - X_s(y) \vert =0 \}}
\sin^2\bigl( 2 \pi k (x-y) \bigr)
\sin^2 \bigl( 2 \pi k (x+y) \bigr) \ud x \, \ud y \, \ud s = 0. 
\end{split} 
\end{equation*} 
So, ${\mathbb P}$ almost surely, for almost every $t \in [0,T]$ and almost every $x \in {\mathbb S}$, 
\begin{equation*}
\begin{split}
\sum_{k \geq 1}  k^{-2\lambda} 
  \int_{\mathbb S}  {\mathbf 1}_{\{ \vert X_s(x) - X_s(y) \vert =0 \}}
\sin^2\bigl( 2 \pi k (x-y) \bigr)
\sin^2 \bigl( 2 \pi k (x+y) \bigr) \ud y=0.  
\end{split} 
\end{equation*} 
For a given $k \in {\mathbb N} \setminus \{0\}$, the function 
$y \in {\mathbb S} \mapsto 
\sin^2( 2 \pi k (x-y) )
\sin^2( 2 \pi k (x+y) )$ can only have $x$ as a zero on a small interval around $x$ (since 
the function is a trigonometric polynomial and has a finite number of zeros on the circle). 
This means that we can find $\delta >0$ (random, depending on $t$ and $x$) such that 
 $$\int_{\mathbb S}  {\mathbf 1}_{\{ \vert X_s(x) - X_s(y) \vert =0 \}}
 {\mathbf 1}_{\{ d_{\mathbb S}(x,y) < \delta \}}
 \ud y =0,$$
where $ d_{\mathbb S}$ is the distance on ${\mathbb S}$. 

 Using again the fact that the collection of points $y$ such that $X_s(y)=X_s(x)$ is an interval containing 
 $x$, we deduce that 
 $$\int_{\mathbb S}  {\mathbf 1}_{\{ \vert X_s(x) - X_s(y) \vert =0 \}}
 \ud y =0.$$
 The above holds true ${\mathbb P}$ almost-surely, for almost every $t$ and for almost every $x$. 
 The result easily follows. 
\end{proof}

\subsection{Main statement}

By combining the last two subsections, we get
\begin{thm}
\label{eq:main:statement:tildeU}
Let ${\mathcal U}$ be given by Theorem 
\ref{main:thm}
and call 
\begin{equation*}
\tilde {\mathcal U}(t,y,\mu)
= 
{\mathcal U} \Bigl(t,\frac{F_\mu(y)}2,\mu \Bigr) \quad y \in  {\mathbb R}.
\end{equation*} 
 Also, 
solve 
\eqref{eq:V}
with $0$ as initial time (and with some initial state $\mu \in {\mathcal P}_2({\mathbb R})$)
and call the solution $(X_t)_{0 \leq t \leq T}$. 
Then, $(X_t)_{0 \leq t \leq T}$ solves 
\begin{equation}
\label{eq:V:2}
\ud X_t(x) = - \tilde{\mathcal U} \bigl(t,X_t(x),\text{\rm Leb}_{\mathbb S} \circ X_t^{-1} \bigr) \ud t + \Delta X_t(x) \ud t + \ud W_t(x) + \ud \eta_t(x), \quad x \in {\mathbb S}, 
\end{equation} 
where, for almost every $t \in [0,T]$, 
${\mathbb P}$-almost surely,
\begin{equation*} 
\begin{split}
\tilde{\mathcal U} \bigl(t,X_t(x),\text{\rm Leb}_{\mathbb S} \circ X_t^{-1} \bigr)
&= {\mathbb E} \biggl[ \partial_x g\bigl(X_T(x),\text{\rm Leb}_{\mathbb S} \circ X_T^{-1} \bigr)
\\
&\hspace{15pt} + \int_t^T \partial_x f\bigl(X_s(x),\text{\rm Leb}_{\mathbb S} \circ X_s^{-1} \bigr) 
\ud s
\, \vert \, {\mathcal F}_t
\biggr], \quad x \in {\mathbb S}.
\end{split}
\end{equation*}
\end{thm}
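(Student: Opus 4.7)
The plan is to combine the two preceding lemmas of this section with the martingale property established in Lemma \ref{lem:martingale}. The first ingredient, Lemma \ref{lem:X:invertible}, gives an exact identification of ${\mathcal U}(t,x,{\rm Leb}_{\mathbb S}\circ X^{-1})$ with $\tilde{\mathcal U}(t,X(x),{\rm Leb}_{\mathbb S}\circ X^{-1})$ whenever the profile $X$ is genuinely injective on $[0,1/2]$, that is, whenever the non-atom condition \eqref{eq:lem:4:1} holds; note that the alternative representation \eqref{eq:tildeU:2} through the cumulative distribution function makes $\tilde{\mathcal U}$ a measurable function of $(t,y,\mu)$ in its own right, independently of whether any particular profile is invertible. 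The second ingredient, established in the preceding lemma, asserts that the solution $(X_t)_{0\le t\le T}$ of \eqref{eq:V} driven by ${\mathcal U}$ satisfies \eqref{eq:lem:4:1} for $\ud t\otimes\ud{\mathbb P}$-a.e.\ $(t,\omega)$.

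I would first splice these two facts: on the full-measure set where \eqref{eq:lem:4:1} holds for $X_t$, Lemma \ref{lem:X:invertible} yields pointwise in $x\in{\mathbb S}$ the identity
\begin{equation*}
{\mathcal U}\bigl(t,x,{\rm Leb}_{\mathbb S}\circ X_t^{-1}\bigr)=\tilde{\mathcal U}\bigl(t,X_t(x),{\rm Leb}_{\mathbb S}\circ X_t^{-1}\bigr).
\end{equation*}
Substituting this into the SPDE \eqref{eq:V} driven by ${\mathcal V}={\mathcal U}$ gives \eqref{eq:V:2} verbatim for $\ud t\otimes\ud{\mathbb P}$-a.e.\ $(t,\omega)$; since the drift appears only under a time integral in the weak formulation from Definition \ref{def:existence}, modifying it on a null set of times does not affect the equation. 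This takes care of the first assertion of the theorem.

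For the conditional expectation formula, I would invoke Lemma \ref{lem:martingale} applied to ${\mathcal V}={\mathcal U}$ (a legitimate choice since ${\mathcal U}=\Phi({\mathcal U})$ by Theorem \ref{main:thm}), with initial datum $(0,\mu)$ and evaluation time $t\in[0,T]$. This yields, ${\mathbb P}$-a.s.\ and for every $x\in{\mathbb S}$,
\begin{equation*}
{\mathcal U}\bigl(t,x,{\rm Leb}_{\mathbb S}\circ X_t^{-1}\bigr)={\mathbb E}\Bigl[\partial_x g\bigl(X_T(x),{\rm Leb}_{\mathbb S}\circ X_T^{-1}\bigr)+\int_t^T\partial_x f\bigl(X_s(x),{\rm Leb}_{\mathbb S}\circ X_s^{-1}\bigr)\ud s\,\bigl\vert\,{\mathcal F}_t\Bigr].
\end{equation*}
Combining with the identification of ${\mathcal U}$ and $\tilde{\mathcal U}$ along the trajectory produces exactly the conditional expectation formula claimed in the statement.

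I do not foresee a genuine obstacle here, since the whole section is built precisely to make this patching possible; the only subtlety to check carefully is that the exceptional null sets coming from the previous lemma and from Lemma \ref{lem:martingale} interact cleanly. In particular, Lemma \ref{lem:martingale} is stated for each fixed $s$ on a full-measure event depending on $s$, while the invertibility lemma holds for a.e.\ $t$ on a full-measure event independent of $t$; by integrating in $t$ against the Lebesgue measure and applying Fubini, both properties can be upgraded to hold jointly for $\ud t\otimes\ud{\mathbb P}$-a.e.\ $(t,\omega)$, which is precisely the form asserted in the theorem. This is the only place where a small care is required; everything else is immediate from the already established results.
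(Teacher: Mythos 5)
Your proposal is correct and takes essentially the same route as the paper, which presents the theorem precisely as the splicing of Lemma \ref{lem:X:invertible}, the almost-sure non-atom property of $X_t$ from the preceding lemma, and the martingale representation of Lemma \ref{lem:martingale} applied with ${\mathcal V}={\mathcal U}=\Phi({\mathcal U})$ (Theorem \ref{main:thm}). The only detail worth recording, also left implicit in the paper, is that Lemma \ref{lem:X:invertible} requires continuity of $X_t$, which holds almost surely for every $t>0$ thanks to the gradient bound of Theorem \ref{thm:main:existence:uniqueness} (see Remark \ref{rem:measurability}).
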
 

\subsection{Connection with standard mean field games}
The reader may wonder about the connection between our construction and the one that is usually followed in
standard mean field games without common noise. Intuitively, the field $\widetilde {\mathcal U}$ identified in the statement of 
Theorem \ref{eq:main:statement:tildeU} should be regarded as 
$(t,x,\mu) \mapsto \partial_x {\mathcal V}(t,x,\mu)$, where ${\mathcal V}$ is the solution of the so-called 
master equation in mean field games, see for instance  \cite{cardaliaguet2019master,CarmonaDelarueI,CarmonaDelarueII}. 
In the absence of common noise, the construction of the field ${\mathcal V}$ is just restricted to cases satisfying certain monotonicity properties (like Lasry-Lions monotonicity either or displacement monotonicity). Because the displacement monotonicity condition is more in line 
with the Pontryagin principle, which we invoked in 
the proof of Proposition 
\ref{prop:2:7:new}, we here assume (in this paragraph only and in order to fix the setting) 
\begin{equation}
\label{eq:monotonicity:assumption} 
{\mathbb E}
\Bigl[ \Bigl( X - X' \Bigr) 
\Bigl( 
\partial_x f\bigl(X,{\mathcal L}(X)\bigr) 
- 
\partial_x f \bigl( X',{\mathcal L}(X') \bigr) 
\Bigr) \Bigr] \geq 0,
\end{equation} 
and similarly for $g$, for any two random variables $X$ and $X'$
that are constructed on an arbitrary probability space, with respective laws ${\mathcal L}(X)$ and 
${\mathcal L}(X')$.

We then consider the same mean field game as the one described in 
Subsection \ref{subse:def:mean field game}, but without 
the additional forcing $\Delta X_t \ud t + \ud W_t$ in the 
equation \eqref{eq:V}. Instead, we just postulate that 
\begin{equation}
\label{eq:V:bis}
\ud X_t(x) = - {\mathcal V}\bigl(t,x,\text{\rm Leb}_{\mathbb S} \circ X_t^{-1} \bigr) \ud t + \ud \eta_t(x), \quad
  t \in [0,T], \quad x \in {\mathbb S}, 
\end{equation} 
where, as before, $(\eta_t)_{0 \leq t \leq T}$ is a reflection term with values in $L^2({\mathbb S})$ such that, for any
$u \in U^2({\mathbb S})$, the path $(\langle \eta_t,u \rangle)_{0 \leq t \le T}$ is non-decreasing. 
As far as ${\mathcal V}$ is concerned, we assume it to belong to the 
class ${\mathcal C}$ introduced in Definition 
\ref{def:mathcalC}. Following Brenier's original work \cite{Brenier1}, one can show that 
\eqref{eq:V:bis}
 has 
 a unique solution 
$(X_t)_{0 \leq t \leq T}$ with a continuous path from $[0,T]$ into $U^2({\mathbb S})$. It is obtained as the limit of 
the following discrete scheme
\begin{equation}
\label{eq:discrete:scheme:deterministic} 
\frac{\ud}{\ud t} X_t^{(h)}(x) =  - {\mathcal V}\bigl(t,x,{\rm Leb}_{\mathbb S} \circ (X_{t_n}^{(h)})^{-1} \bigr), \quad
  t \in [t_n,t_{n+1}), \quad x \in {\mathbb S}, 
\end{equation}  
for a
time mesh $(t_n)_{n=0,\cdots,N}$  of the interval $[0,T]$ with uniform step size $h$. At time $t_{n+1}$, $X_{t_{n+1}}^{(h)}$ is obtained by
rearrangement: 
\begin{equation}
\label{eq:rearrange}
X_{t_{n+1}}^{(h)} = \Bigl( X_{t_{n+1}-}^{(h)} \Bigr)^\star,
\end{equation}
with the symbol $-$ in the above time index denoting the left-limit at point $t_{n+1}$ (which obviously exists here) 
and the symbol $\star$ is the rearrangement operation. Intuitively, we rearrange any function on ${\mathbb S}$ in the form of a function, which has the same distribution under the Lebesgue measure, but which belongs to $U^2({\mathbb S})$. Details may be found in  \cite{Brenier1,delarueHammersley2022rshe}. 

Here is the main result of this subsection. It  shows that, in absence of common noise, our notion of mean field game coincides with the usual one: 
\begin{prop}
\label{prop:MFG}
On the top of the assumptions used in 
Theorem 
\ref{main statement}, assume that 
\eqref{eq:monotonicity:assumption} is in force. Then, the mean field game, as defined in 
Definition 
\ref{subse:def:mean field game} but with $(X_t)_{0 \leq t \leq T}$ satisfying  
equation \eqref{eq:discrete:scheme:deterministic}, has a unique solution ${\mathcal U} \in {\mathcal C}$. 
This solution coincides with the unique mean field game equilibrium that is understood in the usual sense. 

In clear, 
 the flow of distributions 
 $(\mu_t)_{0 \le t \le T}$ defined by
\begin{equation*} 
\mu_t := \textrm{\rm Leb}_{\mathbb S} \circ X_t^{-1},
\end{equation*}  
when $(X_t)_{0 \leq t \leq T}$ is driven by ${\mathcal U}$, is the unique 
continuous flow $(\nu_t)_{0 \leq t \leq T}$ from $[0,T]$ to ${\mathcal P}_2({\mathbb R})$, with $\nu_0=\mu_0$, such that 
the optimal trajectory of the minimization problem 
\begin{equation*}
\inf 
\widetilde J\Bigl((\varphi_t)_{0 \le t \le T};(\nu_t)_{0 \le t \le T} \Bigr), 
\end{equation*}
with
\begin{equation}
\label{eq:cost:mfg} 
\begin{split}
\widetilde J\Bigl((\varphi_t)_{0 \le t \le T};(\nu_t)_{0 \le t \le T} \Bigr)
&:=
 \int_{\mathbb S}   g\bigl(\Phi_T(x),\nu_T \bigr) \ud x
\\
&\hspace{15pt} +  \int_0^T 
 \int_{\mathbb S}
 \Bigl(
f\bigl(\Phi_t(x),\nu_t \bigr) 
+
\frac12 
\vert \varphi_t(x)\vert^2 \Bigr)\ud t \, \ud x,
\end{split}
\end{equation} 
has exactly $(\nu_t)_{0 \le t \le T}$ as marginal laws. Above, 
the minimization is performed over measurable mappings $\varphi : (t,x) \in [0,T] \times {\mathbb S} \mapsto 
\varphi_t(x) \in {\mathbb R}$ satisfying 
$\int_0^T
\int_{\mathbb S} \vert \varphi_t(x) \vert^2 \ud t \ud x< \infty$
and $(\Phi_t)_{0 \le t \le T}$ is 
\begin{equation*}
\frac{\ud}{\ud t} 
\Phi_t(x) = 
\varphi_t(x), \quad t \in [0,T], \quad x \in {\mathbb S},
\end{equation*} 
where $\Phi_0$ is any measurable mapping from ${\mathbb S}$ to ${\mathbb R}$ such that 
${\rm Leb}_{\mathbb S} \circ \Phi_0^{-1} = \mu_0$. The flow of marginal laws of any trajectory is then 
given by $( {\rm Leb}_{\mathbb S} \circ \Phi_t^{-1} )_{0 \le t \le T}$. 
\end{prop}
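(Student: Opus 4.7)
The plan is to mirror the analysis of Sections \ref{sec:FB}--\ref{sec:intrinsic} in the present noiseless setting, substituting the displacement monotonicity \eqref{eq:monotonicity:assumption} for the regularization by noise, and then to identify the resulting quantile flow with the classical MFG equilibrium through Pontryagin's principle.

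\textbf{Existence and uniqueness of the fixed point ${\mathcal U} \in {\mathcal C}$.} The short-time contraction of Proposition \ref{prop:2:2} carries over verbatim to the noiseless dynamics \eqref{eq:V:bis}. For the long-time extension, the smoothing estimates of Subsection \ref{subse:3:3} are unavailable, but \eqref{eq:monotonicity:assumption} supplies the needed a priori bound: for two solutions $(X_s), (X_s')$ of \eqref{eq:V:bis} started from $(t, \mu)$ and $(t, \mu')$ with $X_t, X_t'$ chosen as the quantile representatives (so that $\|X_t - X_t'\|_2 = {\mathbb W}_2(\mu, \mu')$), I would compute $\tfrac{\ud}{\ud s}\|X_s - X_s'\|_2^2$ using the Pontryagin representation \eqref{eq:eq:U} of ${\mathcal U}$ and invoke \eqref{eq:monotonicity:assumption} to obtain $\sup_{t \leq s \leq T} \|X_s - X_s'\|_2 \leq C\,{\mathbb W}_2(\mu, \mu')$. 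This provides a global-in-time $L^2({\mathbb S})$-Lipschitz bound on $\mu \mapsto {\mathcal U}(t, \cdot, \mu)$ that allows one to iterate the short-time construction backward from $T$, and the same computation applied to two candidate fixed points yields uniqueness.

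\textbf{Identification with the classical MFG.} Following Section \ref{sec:intrinsic}, whose arguments transfer directly to the noiseless case, one writes ${\mathcal U}(t, x, \mu) = \tilde{\mathcal U}(t, F_\mu^{-1}(x), \mu)$, and displacement monotonicity forces the candidate trajectory $\dot \Psi_t(x) = -{\mathcal U}(t, x, {\rm Leb}_{\mathbb S} \circ \Psi_t^{-1})$ with $\Psi_0 = F_{\mu_0}^{-1}$ to remain in $U^2({\mathbb S})$; hence the reflection $\eta$ in \eqref{eq:V:bis} is trivial and $\Psi_t = X_t$. Meanwhile, the minimization of $\widetilde J$ in \eqref{eq:cost:mfg} decouples pointwise in the label $x$ into a scalar Lagrangian control problem whose Pontryagin optimum reads
\begin{equation*}
\varphi_t^*(x) = -\partial_x g\bigl(\Phi_T^*(x), \nu_T\bigr) - \int_t^T \partial_x f\bigl(\Phi_s^*(x), \nu_s\bigr)\,\ud s.
\end{equation*}
Specializing to $\nu_t = \mu_t$ and using the feedback structure of $\tilde {\mathcal U}$, I would verify via monotonicity that for any $\Phi_0$ with law $\mu_0$ the optimizer satisfies $\Phi_t^*(x) = X_t\bigl(F_{\mu_0}(\Phi_0(x))/2\bigr)$, hence ${\rm Leb}_{\mathbb S} \circ (\Phi_t^*)^{-1} = \mu_t$. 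Conversely, any continuous flow $(\nu_t)$ fulfilling the MFG consistency condition produces through Pontryagin a fixed point of $\Phi$, which by the previous step must equal ${\mathcal U}$, so that $\nu_t = \mu_t$.

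\textbf{Main obstacle.} The hardest point is the a priori Lipschitz bound in the first step. Since \eqref{eq:monotonicity:assumption} is phrased in terms of states while \eqref{eq:V:bis} is driven by a label-based field ${\mathcal U}(t, x, \mu)$, the passage from ${\mathcal U}$ to $\tilde {\mathcal U}$ is in fact needed to run the monotonicity argument self-consistently. Consequently the two steps must be proved in an interleaved fashion, with care taken to preserve the optimal coupling between $X_s$ and $X_s'$ along the Pontryagin trajectory at each intermediate time.
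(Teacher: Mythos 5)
Your overall plan diverges from the paper's, and two of its steps have genuine gaps. The paper does not re-derive the noiseless theory from scratch: it imports the classical displacement-monotone result \cite[Theorem 3.32]{CarmonaDelarueI}, which gives existence \emph{and uniqueness} of the standard equilibrium together with a bounded, Lipschitz, $x$-non-decreasing decoupling field ${\mathcal W}(t,y,\mu)$; it then sets $\widetilde{\mathcal W}(t,x,\mu):={\mathcal W}(t,F_\mu^{-1}(x),\mu)$, checks that $\widetilde{\mathcal W}\in{\mathcal C}$ and that the quantile flow it drives stays in $U^2({\mathbb S})$ (so the reflection vanishes), verifies via the Pontryagin computation that $\widetilde{\mathcal W}$ is an equilibrium in the sense of Definition \ref{def:mfg:common}, and finally proves uniqueness in ${\mathcal C}$ by iterating the short-time identification of Proposition \ref{prop:2:2} backward against this reference solution, the iteration being sustained by the global Lipschitz bound \eqref{eq:lipschitz:W} inherited from ${\mathcal W}$. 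Your first step proposes instead to re-prove the needed a priori Lipschitz bound by a Gronwall computation on $\|X_s-X_s'\|_2^2$ ``using the Pontryagin representation and invoking \eqref{eq:monotonicity:assumption}''. As sketched this does not close: to run Gronwall you need to control $\|{\mathcal U}(s,\cdot,\mu_s)-{\mathcal U}(s,\cdot,\mu_s')\|_2$ by $\|X_s-X_s'\|_2$ at each time, which is exactly the Lipschitz property of the decoupling field you are trying to establish, so the argument is circular. Displacement monotonicity enters through the cross term $\langle X_s-X_s',\,U_s-U_s'\rangle_2$ with the adjoint $U_s={\mathcal U}(s,\cdot,\mu_s)$ and only yields integrated estimates; upgrading these to a time-uniform Lipschitz bound for the decoupling field is the nontrivial content of the classical forward-backward theory, which the paper cites rather than reconstructs.

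The second gap is the claim that the arguments of Section \ref{sec:intrinsic} ``transfer directly to the noiseless case''. They do not: the almost-everywhere invertibility of $X_t$ (no flat parts), which is what allows one to pass from the label-based ${\mathcal U}$ to the state-based $\tilde{\mathcal U}$ via Lemma \ref{lem:X:invertible}, is proved there by an It\^o computation whose positive quadratic-variation terms come from the common noise --- it is precisely the regularization-by-noise mechanism, unavailable for \eqref{eq:V:bis}, where $\mu_t$ may carry atoms. The paper avoids the issue by going in the opposite direction: it starts from the state-based classical field ${\mathcal W}$ and composes with $F_\mu^{-1}$ to get the label-based one, never needing to invert an abstract ${\mathcal U}\in{\mathcal C}$. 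Finally, your converse step (``any consistent flow $(\nu_t)$ produces through Pontryagin a fixed point of $\Phi$'') is not justified: a single equilibrium flow started from $\mu_0$ does not determine a field on all of $[0,T]\times{\mathbb S}\times{\mathcal P}_2({\mathbb R})$, so it cannot be fed into the fixed-point uniqueness; uniqueness of the classical equilibrium has to be invoked separately (it is part of the cited theorem) and then matched with the unique ${\mathcal U}\in{\mathcal C}$, which is how the paper concludes.
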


\begin{proof} 
{ \ }

\textit{First Step.}
By \cite[Theorem 3.32]{CarmonaDelarueI}, there exists a unique mean field equilibrium (in the standard sense) under
the standing assumption. Moreover, it admits a decoupling field, in the sense that there exists a mapping 
\begin{equation*} 
{\mathcal W} : [0,T] \times {\mathbb R} \times {\mathcal P}_2({\mathbb R}) \rightarrow {\mathbb R}, 
\end{equation*} 
which is Lipschitz continuous in the second and third arguments, uniformly in time, such that the equilibrium 
$(\nu_t)_{0 \le t \le T}$ 
of the mean field game (which has not been yet identified with $(\mu_t)_{0 \le t \le T}$)
is the solution of the (first-order) Fokker-Planck equation:
\begin{equation*} 
\partial_t \nu_t - \textrm{\rm div}_x \bigl( {\mathcal W}(t,\cdot,\nu_t) \nu_t \bigr)=0, \quad t \in [0,T],
\end{equation*} 
with $\nu_0=\mu_0$ as initial condition. 
In fact, one can show (this is a consequence of the analysis carried out in \cite[Theorem 3.32]{CarmonaDelarueI})
that ${\mathcal W}$ is non-decreasing in the argument $x$. In short, this is a consequence of the 
fact that ${\mathcal W}$ has the same monotonicity structure as $\partial_x f$ and $\partial_x g$. 

Alternatively, $(\nu_t)_{0 \le t \le T}$ can be regarded as the flow of marginal laws of 
the solution to the McKean-Vlasov equation:
\begin{equation*}
\frac{\ud}{\ud t} Y_t(x) = - {\mathcal W}\bigl(t,Y_t(x),\textrm{\rm Leb}_{\mathbb S} \circ Y_t^{-1} \bigr),
\quad t \in [0,T],  \quad x \in {\mathbb S}, 
\end{equation*} 
with any initial condition 
$Y_0 : {\mathbb S} \rightarrow {\mathbb R}$ with $\mu_0$ as distribution. 
\vskip 4pt

\textit{Second Step.} In particular, one can choose 
$Y_0(x) = F_{\mu_0}^{-1}(x)$, for $x \in {\mathbb S}$, 
so that $Y_0 \in U^2({\mathbb S})$. We claim that, under this choice, 
$Y_t$ belongs to $U^2({\mathbb S})$ for any $t \in [0,T]$. The proof is twofold. First, we observe that 
$Y_t(x)=Y_t(-x)$, by uniqueness of the solution to the ODE
\begin{equation*} 
\dot{y}_t = - {\mathcal W}\bigl(t,y_t,\nu_t \bigr),
\quad t \in [0,T],
\end{equation*} 
with $y_0=Y_0(x)=Y_0(-x)$ as initial condition. Second, we notice that, for any $x,x' \in (0,1/2)$, 
with $x<x'$, 
\begin{equation} 
\label{eq:y:xprime:x}
\begin{split}
\frac{\ud}{\ud t} 
\biggl( \frac{Y_t(x') - Y_t(x)}{x'-x} \biggr) &=-  \frac{ {\mathcal W}(t,Y_t(x'),\nu_t) - {\mathcal W}(t,Y_t(x),\nu_t)}{x'-x}
\\
&=-  Z_t(x,x') \biggl( \frac{Y_t(x') - Y_t(x)}{x'-x} \biggr),
\end{split}
\end{equation} 
with 
\begin{equation*} 
Z_t(x,x') := 
\left\{
\begin{array}{ll}
\displaystyle 
\frac{ {\mathcal W}(t,Y_t(x'),\nu_t) - {\mathcal W}(t,Y_t(x),\nu_t)}{Y_t(x') - Y_t(x)} \quad &\textrm{if} \quad 
Y_t(x') \not = Y_t(x), 
\\
0 \quad &\textrm{if} \quad 
Y_t(x') = Y_t(x).
\end{array}
\right.
\end{equation*}
By the Lipschitz property of ${\mathcal W}$, we have $\vert Z_t(x,x') \vert \leq C$, for some constant $C \geq 0$
related with the Lipschitz constant 
of ${\mathcal W}$. And then we deduce 
from the `linear' ODE 
\eqref{eq:y:xprime:x} that $Y_t(x')-Y_t(x)$ has the same sign as $Y_0(x') - Y_0(x)$
and is thus non-negative. Moreover, 
\begin{equation*} 
\sup_{0 \leq t \leq T} \vert Y_t(x') - Y_t(x) \vert \leq \exp(CT) \vert Y_0(x')-Y_0(x) \vert, 
\end{equation*} 
which shows that each $Y_t$ is right-continuous in $0$ and left-continuous on $(0,1/2]$, because 
$Y_0$ is also right-continuous in $0$ and left-continuous on $(0,1/2]$. This completes the proof that 
$Y_t$ belongs to $U^2({\mathbb S})$ for each $t \in [0,T]$, that is $Y_t(x) = F_{\nu_t}^{-1}(x)$ for any $(t,x) \in [0,T] \times {\mathbb S}$. 
We thus rewrite the equation for $(Y_t(x))_{0 \le t \le T}$ in the form 
\begin{equation} 
\label{eq:widetilde:W} 
\frac{\ud}{\ud t} Y_t(x) = - {\mathcal W} \bigl(t, F_{\nu_t}^{-1}(x), \nu_t \bigr), \quad t \in [0,T], 
\quad x \in {\mathbb S}. 
\end{equation} 
This prompts us to let:
\begin{equation*} 
\widetilde{W}(t,x,\mu) = {\mathcal W} \bigl( t,F_{\mu}^{-1}(x), \mu \bigr), \quad (t,x,\mu) \in [0,T] \times 
{\mathbb S} \times {\mathcal P}_2({\mathbb R}). 
\end{equation*} 
Next, \eqref{eq:widetilde:W} can be reformulated as
\begin{equation}
\label{eq:widetilde:W:2} 
\begin{split}
\frac{\ud}{\ud t} Y_t(x) &= - \widetilde{\mathcal W} \bigl(t, x, \nu_t \bigr)
\\
&=  - \widetilde{\mathcal W} \bigl(t, x, {\rm Leb}_{\mathbb S} \circ Y_t^{-1} \bigr), \quad t \in [0,T], 
\quad x \in {\mathbb S}. 
\end{split}
\end{equation} 

\textit{Third Step.} 
The point is to regard 
\eqref{eq:widetilde:W:2}
as a reflected equation of the form 
\eqref{eq:V:bis}.
We first observe that, for any two $\mu,\mu' \in {\mathcal P}_2({\mathbb R}^d)$, 
\begin{equation} 
\label{eq:lipschitz:W}
\begin{split} 
\int_{\mathbb S} \bigl\vert 
\widetilde{W}(t,x,\mu)
-
\widetilde{W}(t,x,\mu') \bigr\vert^2 \ud x
&=
\int_{\mathbb S} \bigl\vert 
{\mathcal W} \bigl( t,F_{\mu}^{-1}(x), \mu \bigr)
-
{\mathcal W} \bigl( t,F_{\mu'}^{-1}(x), \mu' \bigr) \bigr\vert^2 \ud x
\\
&\leq C \int_{\mathbb S} \vert F_{\mu}^{-1}(x) - 
F_{\mu'}^{-1}(x) \vert^2 \ud x + 
C {\mathbb W}_2^2(\mu,\mu')
\\
&\leq C{\mathbb W}_2^2(\mu,\mu'),
\end{split} 
\end{equation} 
where the constant $C$ is related with the Lipschitz constant 
of ${\mathcal W}$. The third line follows 
from the solution of the ${\mathbb W}_2$-optimal transport 
problem in dimension 1, see 
\cite{villani2009}.

It remains to notice that, by composition of ${\mathcal W}$ and $F_{\mu}^{-1}$, the function 
$x \in {\mathbb S} \mapsto \tilde{\mathcal W}(t,x,\mu)$ 
belongs to $U^2({\mathbb S})$. Moreover, by 
\cite[Eq. (3.46)]{CarmonaDelarueI} and by boundedness of 
$\partial_x f$ and $\partial_x g$, 
the function ${\mathcal W}$ is bounded. 
This shows that 
$\widetilde{\mathcal W}$ is in the class ${\mathcal C}$.

Finally, since $Y_t$ belongs to $U^2({\mathbb S})$ for each $t \in [0,T]$, 
we can write 
\begin{equation*}
\ud Y_t(x) =  - \widetilde{\mathcal W} \bigl(t, x, {\rm Leb}_{\mathbb S} \circ Y_t^{-1} \bigr) + \ud \widetilde \eta_t(x), \quad t \in [0,T], 
\quad x \in {\mathbb S}, 
\end{equation*} 
with $(\widetilde \eta_t)_{0 \leq t \leq T} \equiv 0$. 
\vskip 4pt

\textit{Fourth Step.} 
We now show that the function $\widetilde{\mathcal W}$ satisfies Definition 
\ref{def:mfg:common}. 
Using the same notation as in 
\eqref{eq:cost}
and 
\eqref{eq:controlled:dynamics}, we consider the cost 
\begin{equation*}
J\Bigl((\gamma_t)_{0 \le t \le T};(Y_t)_{0 \le t \le T} \Bigr)
\end{equation*} 
which  (implicitly) 
requires
to associate with 
the control 
$(\gamma_t)_{0 \le t \le T}$ 
the dynamics
\begin{equation*}
\ud\Gamma_t(x) = \ud Y_t(x) +  \Bigl( \gamma_t(x) + \widetilde{\mathcal W}\bigl(t,x,{\rm Leb}_{\mathbb S} \circ Y_t^{-1} \bigr) \Bigr) \ud t, \quad t \in [0,T] \ ; 
\quad \Gamma_0(x) =Y_0(x), \quad x \in {\mathbb S}. 
\end{equation*}
By 
\eqref{eq:widetilde:W:2}, we get 
\begin{equation*}
\ud\Gamma_t(x) =  \gamma_t(x)  \ud t, \quad t \in [0,T] \ ; 
\quad \Gamma_0(x) =Y_0(x), \quad x \in {\mathbb S}. 
\end{equation*} 
Back to 
\eqref{eq:cost:mfg}, 
we deduce that 
\begin{equation*} 
J\Bigl((\gamma_t)_{0 \le t \le T};(Y_t)_{0 \le t \le T} \Bigr)
= 
\widetilde J\Bigl((\gamma_t)_{0 \le t \le T};(\nu_t)_{0 \le t \le T} \Bigr). 
\end{equation*} 
Since $(\nu_t)_{0 \le t \le T}$ is a mean field equilibrium (in the classical sense), we deduce that, under the initial condition 
$F_{\mu_0}^{-1}$, 
the optimal trajectory to the functional in the right-hand side
(which is unique under the standing convexity properties of $f$ and $g$ in $x$) is 
exactly $(Y_t)_{0 \le t \le T}$. The optimal control is 
\begin{equation*} 
\gamma_t(x) = - 
{\mathcal W}\bigl( t,Y_t(x), \mu \bigr)=-
{\mathcal W}\bigl( t,F_{\mu}^{-1}(x), \mu \bigr)
= - \widetilde{\mathcal W}(t,x,\mu), \quad (t,x) \in [0,T] \times {\mathbb S}. 
\end{equation*} 
This proves that the optimal control to the left-hand side is 
$(t,x) \mapsto -\widetilde{\mathcal W}(t,x,{\rm Leb}_{\mathbb S} \circ Y_t^{-1})$, hence
proving that 
$\widetilde{\mathcal W}$ satisfies Definition 
\ref{def:mfg:common}. 
\vskip 4pt

\textit{Fifth Step.} 
It remains to notice that there exists a unique solution to the mean field game defined in 
Definition 
\ref{subse:def:mean field game}.  
The proof is very similar to that of Proposition 
\ref{prop:fixed:point}. Indeed, 
if ${\mathcal U}$ is a solution and because 
$\widetilde{\mathcal W}$ 
is also a solution, we know from 
Proposition 
\ref{prop:2:2} that it must coincide 
with $\widetilde {\mathcal W}$ on 
$[T-\delta,T]$, for some $\delta >0$ depending on the Lipschitz constants of coefficients
$f$ and $g$. In this way, we have a representation formula 
for ${\mathcal U}(T-\delta,\cdot,\cdot)$. 
Thanks to 
\eqref{eq:lipschitz:W}, we have a Lipschitz property (in the measure argument) for this new boundary condition at time 
$T-\delta$. This allows us to iterate the proof and then to identify ${\mathcal U}$ 
and $\widetilde{\mathcal W}$ on the whole domain. 
%
%
\end{proof}

\section{Appendix: proof of Theorem
\ref{thm:main:existence:uniqueness}}
\label{appendix}

The goal of this section is prove that equation 
\eqref{eq:V} has a solution when ${\mathcal V}$ in the class ${\mathcal C}$ (see Definition 
\ref{def:mathcalC}).

\subsection{Approximation scheme for proving existence}

The proof of existence achieved in 
\cite[Proposition 4.14]{delarueHammersley2022rshe} when ${\mathcal V}=0$ relies on 
a discretization scheme. We follow the same lines below, but we pay a special care in explaining the 
additional difficulties due to the presence of ${\mathcal V}$. 
Notice that another strategy based on  
Girsanov thereom would be conceivable, see for instance the forthcoming work 
\cite{delarueHammersley2022ergodicrshe}.

Following \cite{delarueHammersley2022rshe}, we consider the following approximation, along a uniform
time mesh $(t_n)_{n=0,\cdots,N}$  of the interval $[0,T]$. We call $h \in (0,T)$ the step size of the mesh. 

Iterating on the value of $n$, we then define: 
\begin{equation*}
\ud X_t^{(h)}(x) = - {\mathcal V}\bigl(t,x,\text{\rm Leb}_{\mathbb S} \circ (X_{t_n}^{(h)})^{-1} \bigr) \ud t + \Delta X_t^{(h)}(x) \ud t + \ud W_t(x), \quad t \in [t_n,t_{n+1}),
\end{equation*} 
the value of $X_{t_n}^{(h)}$ being given by the previous iteration of the scheme and the next value $X_{t_{n+1}}^{(h)}$ being given by
rearrangement: 
$$X_{t_{n+1}}^{(h)}= \bigl( X_{t_{n+1}-}^{(h)} \bigr)^\star.$$ We recall that the symbol $-$ in the above time index denotes the left-limit at point $t_{n+1}$. Moreover, the symbol $\star$ is the rearrangement operation, see \eqref{eq:rearrange}. 

One of the very key point in the proof is the analogue of 
Lemma 3.4 in 
\cite{delarueHammersley2022rshe}.

\begin{lem} 
\label{lem:1:1}
There exists a constant $C_T$, independent of $h$, such that 
\begin{equation*} 
{\mathbb E} 
\Bigl[ \bigl\|\nabla X_{t_n}^{(h)} \bigr\|_2^2 \Bigr]
\leq C_T \Bigl[ 1 +  \min \Bigl( \frac{1}{nh} \| X_0 \|_2^2 ,  \bigl\| \nabla X_0 \bigr\|_2^2 \Bigr) \Bigr].
\end{equation*} 
\end{lem}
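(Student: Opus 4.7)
The plan is to follow the strategy of \cite[Lemma 3.4]{delarueHammersley2022rshe}, which establishes the analogous bound in the driftless case ${\mathcal V} \equiv 0$, while carefully accounting for the bounded drift of Definition \ref{def:mathcalC}. The backbone of the driftless argument combines three facts: (i) between rearrangement times $t_m$ and $t_{m+1}$, the heat semigroup is an $H^1$-contraction, so that $\| \nabla e^{(s-t_m)\Delta} X_{t_m}^{(h)} \|_2 \leq \| \nabla X_{t_m}^{(h)} \|_2$; (ii) at each rearrangement time, the P\'olya--Szeg\H{o} inequality $\| \nabla f^\star \|_2 \leq \| \nabla f \|_2$ can only decrease the $H^1$-seminorm; and (iii) rearrangement preserves the $L^2$-norm. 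Together, (i) and (ii) imply that the map $m \mapsto {\mathbb E}\| \nabla X_{t_m}^{(h)} \|_2^2$ is quasi-non-increasing, up to noise and drift corrections, which directly yields the $\| \nabla X_0 \|_2^2$ part of the $\min$ in the stated bound.

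For the $\tfrac{1}{nh} \| X_0 \|_2^2$ part, I would use the Dirichlet identity
\begin{equation*}
\| e^{h\Delta} f \|_2^2 = \| f \|_2^2 - 2 \int_0^h \| \nabla e^{s\Delta} f \|_2^2 \, ds
\end{equation*}
together with (iii) to telescope over the rearrangement steps $m = 0, \ldots, n-1$, yielding the energy estimate $\int_0^{t_n} \| \nabla X_s^{(h)} \|_2^2 \, ds \leq \tfrac{1}{2} \| X_0 \|_2^2 + O(T)$. Combining with the quasi-monotonicity of $s \mapsto {\mathbb E}\| \nabla X_s^{(h)} \|_2^2$, the inequality $\int_{t_m}^{t_{m+1}} {\mathbb E}\| \nabla X_s^{(h)} \|_2^2 \, ds \geq h \, {\mathbb E}\| \nabla X_{t_n}^{(h)} \|_2^2$ holds for every $m < n$, and summing over $m$ gives the sharp bound $n h \, {\mathbb E}\| \nabla X_{t_n}^{(h)} \|_2^2 \leq \tfrac{1}{2} \| X_0 \|_2^2 + O(T)$.

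The stochastic increment is treated as in the reference: the gradient of one step of stochastic convolution has expected squared $L^2$-norm $\tfrac{1}{2} \sum_{k \geq 1} k^{-2\lambda} (1 - e^{-8\pi^2 k^2 h})$, bounded by $\tfrac{1}{2} \sum_{k \geq 1} k^{-2\lambda}$, which is finite thanks to $\lambda > 1/2$; summed over $n$ steps via the same energy bookkeeping, this contributes the additive $C_T$. The genuinely new input is the drift: since $\| {\mathcal V} \|_\infty \leq C$ by item (1) of Definition \ref{def:mathcalC}, the $L^2$-to-$H^1$ smoothing $\| \nabla e^{\tau \Delta} f \|_2 \leq C \tau^{-1/2} \| f \|_2$ gives the pointwise estimate
\begin{equation*}
\Bigl\| \nabla \int_{t_m}^{t_{m+1}} e^{(t_{m+1}-s)\Delta} {\mathcal V}\bigl(s,\cdot,\text{\rm Leb}_{\mathbb S} \circ (X_{t_m}^{(h)})^{-1}\bigr) \, ds \Bigr\|_2 \leq C \sqrt{h},
\end{equation*}
and the induced cross terms in the $L^2$-identity are absorbed using Cauchy--Schwarz and Young's inequality, contributing at most $C' T$ over the full time horizon.

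The main obstacle, compared to the driftless case, will be precisely to carry these drift-generated cross terms through the telescoping argument without damaging the sharp $1/(nh)$ rate. The boundedness of ${\mathcal V}$ is decisive here: it supplies a state-independent a priori bound, ensuring that the drift perturbation accumulates linearly in $T$ rather than exponentially in $n$. An alternative route, alluded to through the reference to \cite{delarueHammersley2022ergodicrshe}, would be to remove the drift altogether by a Girsanov change of measure (Novikov's condition is trivial by the boundedness of ${\mathcal V}$) and to transfer the driftless estimate back to ${\mathbb P}$ via $L^p$ moments of the Radon--Nikodym derivative; this variant would require the driftless bound to be established in a stronger $L^p$ sense and is thus of comparable difficulty.
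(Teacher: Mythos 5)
There is a genuine gap, and it sits exactly at the bookkeeping step you dismiss as routine. Your recursion is driven by plain Pólya--Szeg\H{o} plus the $H^1$-contraction of the heat semigroup, so each step produces an inequality of the type ${\mathbb E}\|\nabla X_{t_{m+1}}^{(h)}\|_2^2 \le {\mathbb E}\|\nabla X_{t_m}^{(h)}\|_2^2 + (\text{noise correction}) + (\text{drift correction})$. The noise correction per step is ${\mathbb E}\|\nabla \int_{t_m}^{t_{m+1}} e^{(t_{m+1}-s)\Delta}\ud W_s\|_2^2 = \tfrac12\sum_k k^{-2\lambda}(1-e^{-ck^2h})\asymp h^{\lambda-1/2}$; summed over $n\sim T/h$ steps this is of order $T h^{\lambda-3/2}$, which diverges as $h\to 0$ (and bounding it by the $h$-independent constant $\tfrac12\sum_k k^{-2\lambda}$, as you do, gives $O(T/h)$, which is worse). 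The uniform-in-$h$ bound is only true because the increments injected at step $k$ are damped by the heat flow over the remaining time $(n-k)h$ before time $t_n$, and a step-by-step recursion of your form forgets this damping as soon as the rearrangement is applied. This is precisely why the paper (following Lemma 3.4 of \cite{delarueHammersley2022rshe}) introduces the auxiliary uniform random variable $U$ and iterates a \emph{smoothed} rearrangement comparison for ${\mathbb E}\|\nabla(e^{(n-m)hU\Delta}X_{t_m}^{(h)})\|_2^2$: the artificial smoothing $e^{(n-k)hU\Delta}$ is carried through every rearrangement, damps the accumulated stochastic convolutions (yielding the $c_{\delta,\lambda}(hn)^\delta$ term), and, once it reaches $X_0$ and is averaged over $U$, produces the $\tfrac{1}{nh}\|X_0\|_2^2$ part of the minimum. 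Your alternative route to that part (Dirichlet identity plus ``quasi-monotonicity'' of $s\mapsto{\mathbb E}\|\nabla X_s^{(h)}\|_2^2$) suffers from the same defect: the quasi-monotonicity has per-step errors whose sum you cannot control with the stated tools.

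The drift treatment has the same problem and, in addition, misses the structural ingredient the paper actually uses. Your per-step estimate $\|\nabla\int_{t_m}^{t_{m+1}} e^{(t_{m+1}-s)\Delta}{\mathcal V}\,\ud s\|_2\le C\sqrt h$ is correct, but absorbing the resulting cross terms by Cauchy--Schwarz and Young does not give $C'T$: with $2\|\nabla A\|_2\,C\sqrt h\le h\|\nabla A\|_2^2 + C^2$ the additive constants sum to $O(T/h)$, and any fixed Young parameter makes the multiplicative factor compound exponentially in $n$. In the paper's squared-norm iteration the drift enters through $\int_{t_{k-1}}^{t_k}\|\nabla e^{(t_k-s+(n-k)hU)\Delta}{\mathcal V}\|_2^2\,\ud s$, and with only $\|{\mathcal V}\|_\infty$ and $L^2\to H^1$ smoothing this behaves like $\int (t_k-s+(n-k)hU)^{-1}\ud s$, which is not even finite at the last step $k=n$. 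The paper resolves this by using item (2) of Definition \ref{def:mathcalC}: ${\mathcal V}(s,\cdot,\mu)\in U^2({\mathbb S})$ is monotone and bounded, hence of total variation at most $4\|{\mathcal V}\|_\infty$, so that $\|\nabla e^{\tau\Delta}{\mathcal V}\|_1\le c\|{\mathcal V}\|_{\rm TV}$ and, by interpolation $\|g\|_2^2\le\|g\|_1\|g\|_\infty$, $\|\nabla e^{\tau\Delta}{\mathcal V}\|_2^2\le c\|{\mathcal V}\|_\infty^2\,\tau^{-1/2}$; combined with the $U$-smoothing this sums over the steps to $O(\sqrt{hn})\le C\sqrt T$ (see \eqref{eq:tightness:2}--\eqref{eq:tightness:3}). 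Your proposal never invokes the monotonicity of $x\mapsto{\mathcal V}(t,x,\mu)$, and without it (or some substitute for the randomized smoothing device) the scheme you outline does not close.
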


\begin{proof}
We follow the proof of Lemma 3.4 in \cite{delarueHammersley2022rshe}.
We consider an additional random variable $U$, uniformly distributed on $(0,1)$
and independent of $W$. 
By Lemma 3.3 in \cite{delarueHammersley2022rshe}, 
we get the first line below, which holds true for any $r \geq 0$. The second line below is standard algebra. 
\begin{align}
&{\mathbb E} 
\Bigl[ \bigl\| \nabla \bigl(  e^{r U \Delta} X_{t_n}^{(h)} \bigr) \bigr\|_2^2 \Bigr] 
\nonumber
\\
&\leq 
{\mathbb E} 
\biggl[ \biggl\|
\nabla \biggl( e^{(r U + h) \Delta } X_{t_{n-1}}^{(h)} + \int_{t_{n-1}}^{t_n} e^{(r U + t_n-s) \Delta} \Bigl[ - {\mathcal V}\bigl(s,\cdot,
\text{\rm Leb}_{\mathbb S} \circ (X_{t_{n-1}}^{(h)})^{-1} \bigr) \ud s + \ud W_s \Bigr]
\biggr)
\biggr\|_2^2 \biggr] \nonumber
\\
&\leq (1+h) 
{\mathbb E} 
\biggl[ \biggl\|
\nabla \biggl( e^{( r U + h) \Delta } X_{t_{n-1}}^{(h)} + \int_{t_{n-1}}^{t_n} e^{(rU + t_n-s) \Delta}  \ud W_s 
\biggr)
\biggr\|_2^2 \biggr] 
\label{eq:tightness:1}
\\
&\hspace{10pt} + 
\bigl( 1+ \frac1{h} \bigr) {\mathbb E} 
\biggl[ \biggl\|
\nabla \biggl(  \int_{t_{n-1}}^{t_n} e^{(r U + t_n-s) \Delta}  \Bigl[ {\mathcal V}\bigl(s,\cdot,
\text{\rm Leb}_{\mathbb S} \circ (X_{t_{n-1}}^{(h)})^{-1} \bigr)    \Bigr]
\ud s \biggr)
\biggr\|_2^2 \biggr].
\nonumber
\end{align} 
And then, 
by orthogonality of the stochastic integral with the $\sigma$-field 
${\mathcal F}_{t_{n-1}}$
and by Jensen inequality, 
we obtain the first inequality below. 
\begin{equation*}
\begin{split}
&{\mathbb E} 
\Bigl[ \bigl\| \nabla \bigl( e^{r U \Delta } 
 X_{t_n}^{(h)} \bigr) \bigr\|_2^2 \Bigr] 
\\
&\leq 
 (1+h) 
{\mathbb E} 
\Bigl[ \bigl\|
\nabla \bigl( e^{(r U + h) \Delta } X_{t_{n-1}}^{(h)} 
\bigr) \bigr\|_2^2 
\Bigr] 
+ (1+h) {\mathbb E} 
\biggl[ 
\biggl\|
 \int_{t_{n-1}}^{t_n} \nabla e^{(r U + t_n-s) \Delta}  \ud W_s 
\biggr\|_2^2 \biggr]
\\
&\hspace{10pt} + 
\bigl( 1+ h \bigr) {\mathbb E} 
\biggl[ 
 \int_{t_{n-1}}^{t_n}
\Bigl\|
\nabla \Bigl(  e^{(rU +t_n-s) \Delta}  \bigl[ {\mathcal V}\bigl(s,\cdot,
\text{\rm Leb}_{\mathbb S} \circ (X_{t_{n-1}}^{(h)})^{-1}\bigr) 
   \bigr]
   \Bigr)
   \Bigr\|_2^2
   \ud s
    \biggr].
\end{split}
\end{equation*} 
We then replace $t_n$ by $t_m$ and we choose $r=(n-m)h$, which makes it possible to iterate. 
We obtain, for $m \in \{1,\cdots,n\}$, 
\begin{equation*} 
\begin{split}
 {\mathbb E} 
\Bigl[ \bigl\| \nabla    X_{t_n}^{(h)}   \bigr\|_2^2 \Bigr] 
&\leq 
 (1+h)^{n-m+1}  
{\mathbb E} 
\Bigl[ \bigl\|
\nabla \bigl( e^{(n-m+1) h U \Delta } X_{t_{m-1}}^{(h)}
\bigr) \bigr\|_2^2 
\Bigr] 
\\
&\hspace{10pt} +
\sum_{k=m}^{n}
(1+h)^{n-k+1}  
\biggl\{
{\mathbb E} 
\biggl[ \biggl\| \int_{t_{k-1}}^{t_{k}}
\nabla e^{(t_{k}-s + (n-k) h U) \Delta} \ud W_s \biggr\|_2^2 \biggr]
\\
&\hspace{20pt} +
{\mathbb E} \biggl[ \int_{t_{k-1}}^{t_{k}} 
\Bigl\| \nabla \Bigl(  
e^{(t_k-s + (n-k) h U) \Delta}\bigl[ {\mathcal V}\bigl(s,\cdot,
\text{\rm Leb}_{\mathbb S} \circ (X_{t_{k-1}}^{(h)})^{-1}\bigr) 
   \bigr]
   \Bigr)
   \Bigr\|_2^2
   \ud s
   \biggr] \biggr\}.
\end{split}
\end{equation*} 
The sum on the second line is handled as in Lemma 
3.4 in 
\cite{delarueHammersley2022rshe}. 
Choosing $m=1$, we deduce that, for 
$\delta \in (0,\lambda-1/2)$,  
\begin{equation*} 
\begin{split}
 {\mathbb E} 
\Bigl[ \bigl\| \nabla    X_{t_n}^{(h)}   \bigr\|_2^2 \Bigr] 
&\leq 
 (1+h)^{n}  
{\mathbb E} 
\Bigl[ \bigl\|
\nabla \bigl( e^{n h U \Delta } X_{0} 
\bigr) \bigr\|_2^2 
\Bigr] 
+c_{\delta,\lambda} (hn)^{\delta}
\\
&\hspace{-15pt} +
\sum_{k=1}^{n}
(1+h)^{n-k+1}  
{\mathbb E} \biggl[ \int_{t_{k-1}}^{t_{k}} 
\Bigl\| \nabla \Bigl(  
e^{(t_k-s + (n-k) h U) \Delta}\bigl[ {\mathcal V}\bigl(s,\cdot,
\text{\rm Leb}_{\mathbb S} \circ (X_{t_{k-1}}^{(h)})^{-1}\bigr) 
   \bigr]
   \Bigr)
   \Bigr\|_2^2
   \ud s
   \biggr].
\end{split}
\end{equation*} 
The really new term is the sum on the second line above. 
We proceed as follows. 
\begin{equation*}
\begin{split}
\Bigl\| \nabla \Bigl(  
e^{(t_k-s + (n-k) h U) \Delta}\bigl[ {\mathcal V}\bigl(s,\cdot,
\text{\rm Leb}_{\mathbb S} \circ (X_{t_{k-1}}^{(h)})^{-1} \bigr) 
   \bigr]
   \Bigr)
   \Bigr\|_\infty
 \leq \frac{ c \| {\mathcal V} \|_\infty}{\sqrt{t_k - s + (n-k) h U}},
\end{split}
\end{equation*}
and
\begin{equation*}
\begin{split}
\Bigl\| \nabla \Bigl(  
e^{(t_k-s + (n-k) h U) \Delta}\bigl[ {\mathcal V}\bigl(s,\cdot,
\text{\rm Leb}_{\mathbb S} \circ (X_{t_{k-1}}^{(h)})^{-1} \bigr) 
   \bigr]
   \Bigr)
   \Bigr\|_1
\leq c \| {\mathcal V} \|_{\rm TV},
\end{split}
\end{equation*} 
with $\| \cdot \|_{\rm TV}$ denoting the total variation norm. 
Since ${\mathcal V}$ is non-decreasing on $(0,1/2)$ and symmetric with respect to $0$, 
we have 
$\|{\mathcal V} \|_{\rm TV} \leq 4 \| {\mathcal V} \|_\infty$. And, by H\"older inequality, 
\begin{equation}
\label{eq:tightness:2}
\begin{split}
\Bigl\| \nabla \Bigl(  
e^{(t_k-s + (n-k) h U) \Delta}\bigl[ {\mathcal V}\bigl(s,\cdot,
\text{\rm Leb}_{\mathbb S} \circ (X_{t_{k-1}}^{(h)})^{-1}\bigr) 
   \bigr]
   \Bigr)
   \Bigr\|_2^2
\leq  \frac{ c \| {\mathcal V} \|^2_\infty}{\sqrt{t_k - s + (n-k) h U}}. 
\end{split}
\end{equation} 
Then, 
\begin{equation}
\label{eq:tightness:3}
\begin{split}
&\int_{t_{k-1}}^{t_k} 
\Bigl\| \nabla \Bigl(  
e^{(t_k-s + (n-k) h U) \Delta}\bigl[ {\mathcal V}\bigl(s,\cdot,
\text{\rm Leb}_{\mathbb S} \circ (X_{t_{k-1}}^{(h)})^{-1} \bigr) 
   \bigr]
   \Bigr)
   \Bigr\|_2^2 \ud s 
\\
&\leq  \int_{t_{k-1}}^{t_k} 
\frac{ c \| {\mathcal V} \|_\infty}{\sqrt{t_k - s + (n-k) h U}} \ud s
=   \sqrt{h} \int_0^1 
\frac{ c \| {\mathcal V} \|_\infty}{\sqrt{ r + (n-k)  U}} \ud r
\\
&\leq   \frac{\sqrt{h}}{\sqrt{U}} \int_0^1 
\frac{ c \| {\mathcal V} \|_\infty}{\sqrt{ r + n-k }} \ud r
= 2c \frac{\sqrt{h}}{\sqrt{U}}  \| {\mathcal V} \|_\infty \Bigl[  \sqrt{ 1 + n-k}
-
 \sqrt{ n-k} \Bigr]. 
\end{split}
\end{equation} 
Eventually, 
\begin{equation*}
\begin{split}
&{\mathbb E} \sum_{k=1}^n \int_{t_{k-1}}^{t_k} 
\Bigl\| \nabla \Bigl(  
e^{(t_k-s + (n-k) h U) \Delta}\bigl[ {\mathcal V}\bigl(s,\cdot,
\text{\rm Leb}_{\mathbb S} \circ (X_{t_{k-1}}^{(h)})^{-1} \bigr) 
   \bigr]
   \Bigr)
   \Bigr\|_2^2 \ud s
   \\
   &\leq C  \sqrt{h} 
   \sum_{k=1}^n \Bigl[  \sqrt{ 1 + n-k}
-
 \sqrt{ n-k} \Bigr]= 2c \sqrt{h n}. 
\end{split}
\end{equation*}
In order to complete the proof, it remains to see that
\begin{equation*} 
{\mathbb E} 
\Bigl[ \bigl\|
\nabla \bigl( e^{n h U \Delta } X_{0} 
\bigr) \bigr\|_2^2 
\Bigr] \leq C_T 
\min \Bigl( \frac{1}{nh} \| X_0 \|_2^2 ,  \bigl\| \nabla X_0 \bigr\|_2^2 \Bigr), 
\end{equation*}  
see the end of the proof of 
  Lemma 
3.4 in 
\cite{delarueHammersley2022rshe}. 
\end{proof}

\subsection{Tightness}
The next step is to verify the various ingredients for proving tightness,  
as developed in \cite[Subsection 3.3]{delarueHammersley2022rshe}:
 \begin{prop}
\label{prop:tightness}
Denoting
by $(\tilde X^{(h)}_t)_{0 \le t \le T}$ the linear interpolation of 
$(X^{(h)}_{t_n})_{n=0,\cdots,N}$, the family $\{ \tilde X^{(h)}\}_{\textcolor{black}{h \in (0,1)}}$ induces a tight family of probability measures on the space  $\cC([0,T],L^2_{\rm sym}(\mathbb S))$ of continuous functions from 
$[0,T]$ into $L^2_{\rm sym}(\mathbb S)$. 
\end{prop}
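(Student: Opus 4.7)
The plan is to verify the standard compactness criterion for laws on $\cC([0,T],L^2_{\rm sym}(\mathbb{S}))$: it suffices to show that (i) for each $t \in [0,T]$ the family $\{\tilde X^{(h)}_t\}_{h\in(0,1)}$ is tight in $L^2_{\rm sym}(\mathbb{S})$; and (ii) for every $\eta,\varepsilon>0$ there exists $\delta>0$ with
\begin{equation*}
\sup_{h\in(0,1)}\mathbb{P}\Bigl( \sup_{\substack{s,t\in[0,T]\\|t-s|\le\delta}}\bigl\|\tilde X^{(h)}_t - \tilde X^{(h)}_s\bigr\|_2 > \eta \Bigr) < \varepsilon.
\end{equation*}
Both points should follow the driftless argument of the analogous tightness step in \cite{delarueHammersley2022rshe}; the only novelty is to absorb the contribution of the bounded drift $\mathcal{V}$.

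For (i), I would exploit the compact embedding $H^1_{\rm sym}(\mathbb{S}) \hookrightarrow L^2_{\rm sym}(\mathbb{S})$. On every sub-interval $[t_n,t_{n+1}]$, $\tilde X^{(h)}_t$ is a convex combination of $X^{(h)}_{t_n}$ and $X^{(h)}_{t_{n+1}}$, both of which lie in $U^2(\mathbb{S}) \subset H^1_{\rm sym}(\mathbb{S})$ whenever their gradients are square-integrable. By the triangle inequality, $\|\nabla \tilde X^{(h)}_t\|_2 \le \max_n\|\nabla X^{(h)}_{t_n}\|_2$, so Lemma \ref{lem:1:1} yields a uniform-in-$h$ bound on $\mathbb{E}[\|\nabla \tilde X^{(h)}_t\|_2^2]$ for any $t\ge\alpha>0$. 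Combined with Markov's inequality and the compactness of bounded sets of $H^1_{\rm sym}(\mathbb{S})$ in $L^2_{\rm sym}(\mathbb{S})$, this gives tightness at such $t$. Tightness for $t\in[0,\alpha]$ is then obtained by combining tightness at $t=\alpha$ with the equicontinuity of (ii) and letting $\alpha\to0$.

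For (ii), the crucial structural ingredient is the $L^2$-contractivity of the rearrangement operation together with the fact that $X^{(h)}_{t_n} \in U^2(\mathbb{S})$ coincides with its own rearrangement:
\begin{equation*}
\bigl\|X^{(h)}_{t_{n+1}} - X^{(h)}_{t_n}\bigr\|_2 = \bigl\|\bigl(X^{(h)}_{t_{n+1}-}\bigr)^\star - \bigl(X^{(h)}_{t_n}\bigr)^\star\bigr\|_2 \le \bigl\|X^{(h)}_{t_{n+1}-} - X^{(h)}_{t_n}\bigr\|_2.
\end{equation*}
The right-hand side is the $L^2$-increment over $[t_n,t_{n+1}]$ of the unconstrained coloured SPDE driven by $\Delta$, $-\mathcal{V}$, and $W$, and it decomposes naturally into a drift piece controlled by $\|\mathcal{V}\|_\infty h$ (using boundedness in item (1) of Definition \ref{def:mathcalC}), an analytic piece of the form $(e^{h\Delta}-I)X^{(h)}_{t_n}$, and a stochastic convolution whose $p$-th moments are controlled via the trace-class properties of $Q$ as in \cite{delarueHammersley2022rshe}. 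A Kolmogorov-Chentsov estimate of the form $\mathbb{E}[\|X^{(h)}_{t_m}-X^{(h)}_{t_n}\|_2^p] \le C|t_m-t_n|^{1+\beta}$ for $p$ large and some $\beta>0$ then transfers, via the triangle inequality on arbitrary $s,t \in [0,T]$, to the linear interpolant $\tilde X^{(h)}$ and delivers the required uniform modulus.

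The main obstacle over and above \cite{delarueHammersley2022rshe} is to confirm that $\mathcal{V}$ does not degrade either the $H^1$ bound or the increment bound, and this is the easy part: in the $H^1$ analysis its contribution is precisely the $\sqrt{hn}$ term already accounted for in Lemma \ref{lem:1:1}, while in the time-increment analysis it is only of order $h$, hence subdominant to the $O(\sqrt{h})$ stochastic convolution. With these transparent adjustments, the argument runs exactly as in the driftless tightness proof of \cite{delarueHammersley2022rshe}.
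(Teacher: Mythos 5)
Your part (i) (compact containment via the $H^1$ bound of Lemma \ref{lem:1:1} and the convexity bound $\|\nabla \tilde X^{(h)}_t\|_2 \le \max_n \|\nabla X^{(h)}_{t_n}\|_2$) is in the spirit of what is needed, and your observation that the drift only contributes terms of order $\|{\mathcal V}\|_\infty h$ per step (hence the $\sqrt{hn}$ correction in the gradient estimate) matches the paper. The gap is in part (ii). The one-step contraction $\|X^{(h)}_{t_{n+1}}-X^{(h)}_{t_n}\|_2 \le \|X^{(h)}_{t_{n+1}-}-X^{(h)}_{t_n}\|_2$ is correct, but it cannot be upgraded to a Kolmogorov--Chentsov bound $\mathbb{E}\bigl[\|X^{(h)}_{t_m}-X^{(h)}_{t_n}\|_2^p\bigr]\le C\,|t_m-t_n|^{1+\beta}$ ``via the triangle inequality on arbitrary $s,t$'': each one-step increment carries a stochastic-convolution contribution of size $\sqrt{h}$, so summing $m-n$ of them produces a bound of order $(t_m-t_n)\,h^{-1/2}$, which blows up as $h\to 0$ for a fixed time lag; likewise the analytic pieces $(e^{h\Delta}-I)X^{(h)}_{t_k}$ are only controlled by $\sqrt{h}\,\|\nabla X^{(h)}_{t_k}\|_2$ with $\|\nabla X^{(h)}_{t_k}\|_2 \sim t_k^{-1/2}$ for a general $X_0\in U^2({\mathbb S})$, and these errors also accumulate. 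So the scheme you sketch does not deliver a uniform-in-$h$ modulus, which is precisely the hard point of the proposition.

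The paper avoids this accumulation by never comparing $X^{(h)}_{t_n}$ directly with $X^{(h)}_{t_m}$: it estimates the increment \emph{relative to the heat semigroup}, $\|X^{(h)}_{t_n}-e^{(t_n-t_m)\Delta}X^{(h)}_{t_m}\|_2$, for which the nonexpansivity of the rearrangement and of $e^{h\Delta}$ make the errors telescope rather than add up; the $2p$-th conditional moments of this quantity are controlled through the maximal inequality for stochastic convolutions (Theorem 6 of \cite{salavatiZangeneh2016pthPowerMaxIneqStochConv}), with the drift entering only through harmless $O(h)$ terms. The semigroup factor is then removed using conditional gradient estimates of the type of Lemma \ref{lem:1:1}, randomized by an independent uniform variable $U$ (this is where the new drift terms $\sqrt{h}\,U^{-1/2}[\sqrt{1+n-k}-\sqrt{n-k}]$ appear and sum to $(h(m-N))^{p/2}$). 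Finally, since the gradient bound degenerates at $t=0$, the modulus of continuity near the initial time is treated separately through the decomposition of $\|X^{(h)}_{t_n}-e^{nh\Delta}X_0\|_2^2$ into the terms $T^1_n$ and $T^2_n$ and their $p$-th moment increments. Your proposal is missing this semigroup-relative comparison, the maximal inequality, and the separate treatment at $t=0$; without them the equicontinuity step fails.
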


\begin{proof}
The key is to adapt Proposition 3.7  in \cite{delarueHammersley2022rshe}. 
We follow, line by line, the computations therein. We start with 
\begin{equation*} 
\begin{split} 
\bigl\| X_{t_n}^{(h)}  - e^{nh \Delta} X_0 \bigr\|_2^2
&\leq 
\biggl\| e^{h \Delta} 
X_{t_{n-1}}^{(h)} 
- e^{nh \Delta} X_0 
\\
&\hspace{15pt} -
 \int_{t_{n-1}}^{t_n} e^{(t_n-s) \Delta} {\mathcal V}\bigl(s, \cdot, {\rm Leb}_{\mathbb S} \circ (X_{t_{n-1}}^{(h)})^{-1} \bigr) \ud s
		+  \int_{t_{n-1}}^{t_n} e^{ (t_n-s) \Delta} \ud W_s 
		\biggr\|_2^2. 
\end{split} 
\end{equation*} 
Proceeding as in the derivation of 
\eqref{eq:tightness:1}, we get 
\begin{equation*} 
\begin{split}
\bigl\| X_{t_n}^{(h)}  - e^{nh \Delta} X_0 \bigr\|_2^2
&\leq 
(1+h) 
\biggl\| e^{h \Delta} 
X_{t_{n-1}}^{(h)} 
- e^{nh \Delta} X_0 	+  \int_{t_{n-1}}^{t_n} e^{ (t_n-s) \Delta} \ud W_s 
		\biggr\|_2^2
\\
&\hspace{15pt} +
\bigl( 1 + \frac1{h} \bigr) 
\biggl\| 
 \int_{t_{n-1}}^{t_n} e^{(t_n-s) \Delta} {\mathcal V}\bigl(s, \cdot, {\rm Leb}_{\mathbb S} \circ (X_{t_{n-1}}^{(h)})^{-1} \bigr) \ud s
 \biggr\|_2^2
 \\
 &\leq (1+h) 
\biggl\| e^{h \Delta} 
X_{t_{n-1}}^{(h)} 
- e^{nh \Delta} X_0 	+  \int_{t_{n-1}}^{t_n} e^{ (t_n-s) \Delta} \ud W_s 
		\biggr\|_2^2
 + Ch, 
\end{split} 
\end{equation*} 
for a constant $C$ independent of $h$. 
We expand the above in the form 
\begin{equation*} 
\begin{split} 
\bigl\| X_{t_n}^{(h)}  - e^{nh \Delta} X_0 \bigr\|_2^2
&\leq 
(1+h) 
\bigl\| X_{t_{n-1}}^{(h)}  - e^{(n-1)h \Delta} X_0 \bigr\|_2^2
+ 
(1+h) 
\biggl\| 
 \int_{t_{n-1}}^{t_n} e^{ (t_n-s) \Delta} \ud W_s 
		\biggr\|_2^2
		\\
		&\hspace{15pt} 
		+ 
		2 (1+h) \Bigl\langle e^{h \Delta} X_{t_{n-1}}^{(h)} - e^{n h \Delta} X_0, 
 \int_{t_{n-1}}^{t_n} e^{ (t_n-s) \Delta} \ud W_s
 \Bigr\rangle_2 
 + Ch. 
\end{split} 
\end{equation*} 
By iterating, we obtain 
\begin{equation*} 
\begin{split} 
\bigl\| X_{t_n}^{(h)}  - e^{nh \Delta} X_0 \bigr\|_2^2
&\leq 
\sum_{k=1}^{n} (1+h)^{n-k+1} \biggl[ 
\biggl\| 
 \int_{t_{k-1}}^{t_k} e^{ (t_k-s) \Delta} \ud W_s 
		\biggr\|_2^2
		\\
&\hspace{15pt} 		+
		2   \Bigl\langle e^{h \Delta} X_{t_{k-1}}^{(h)} - e^{k h \Delta} X_0, 
 \int_{t_{k-1}}^{t_k} e^{ (t_k-s) \Delta} \ud W_s
 \Bigr\rangle_2 + C h 
 \biggr], 
\end{split}
\end{equation*}
which decomposition is very similar to 
\cite[(3.31)]{delarueHammersley2022rshe}. 
Following 
\cite[(3.32)]{delarueHammersley2022rshe}, we then let, 
for 
$n \in  \{0,\cdots,
 \lfloor T/h \rfloor \}$,
\begin{equation*} 
\begin{split}
&T_n^1 := \sum_{k=1}^{n} (1+h)^{n-k+1} 
\biggl\| 
 \int_{t_{k-1}}^{t_k} e^{ (t_k-s) \Delta} \ud W_s 
		\biggr\|_2^2, 
		\\
&T_n^2 := 2 \sum_{k=1}^{n} (1+h)^{n-k+1} 
 \Bigl\langle e^{h \Delta} X_{t_{k-1}}^{(h)} - e^{k h \Delta} X_0, 
 \int_{t_{k-1}}^{t_k} e^{ (t_k-s) \Delta} \ud W_s
 \Bigr\rangle_2,
\end{split} 
\end{equation*} 
And then, by the same computations as in 
\cite{delarueHammersley2022rshe}, we have, for any exponent $p \geq 1$, 
\begin{equation*} 
\begin{split}
&{\mathbb E} 
\Bigl[ 
\bigl\vert T_n^1 - T_m^1 \bigr\vert^p 
\Bigr] 
\leq 
C_p \bigl( h (n-m) \bigr)^p, \quad m,n \in \bigl\{0,\cdots,
 \lfloor T/h \rfloor \bigr\},
\end{split} 
\end{equation*} 
with the constant $C_p$ being allowed to depend on $p$.  
Moreover, with an obvious adaptation of Lemma 3.2 and Corollary 3.5 in 
\cite{delarueHammersley2022rshe} and with the same computations as in 
\cite[(3.33)]{delarueHammersley2022rshe}, we also have 
\begin{equation*} 
{\mathbb E} 
\Bigl[ 
\bigl\vert T_n^2 - T_m^2 \bigr\vert^p 
\Bigr] 
\leq 
C_p \bigl( h (n-m) \bigr)^{p/2}, \quad m,n \in \bigl\{0,\cdots,
 \lfloor T/h \rfloor \bigr\}.
 \end{equation*} 
%
%
%
%
Then it is easy to follow displays (3.34) to (3.39) in \cite{delarueHammersley2022rshe}. We deduce
\begin{equation*} 
\bigl\| \tilde X_t^{(h)} -   X_0 \bigr\|_2 \leq \Xi^{(h)} t^{\alpha/2} + w(t),
\end{equation*} 
where $(\tilde X_t^{(h)})_{0 \leq t \leq T}$ is the piecewise-linear interpolation of 
$(X_{t_n})_{n =0,\cdots,N}$. 
Here, $\Xi^{(h)}$ is a random variable with moments of any order $p \geq 1$ that are bounded independently of $h$
and $t \mapsto w(t)$ is random function that is bounded and that tends almost surely to $0$ as $t$ tends to $0$.

The next step is to study the analogue of (3.43) in 
\cite{delarueHammersley2022rshe}. Proceeding as before, we get 
\begin{equation*}
	\begin{split}
		 \left\lVert    X^{(h)}_{t_n}-   
		 e^{(n-m)h\Delta}   
		 X^{(h)}_{t_m}   \right\rVert_{2}^{2p}   
		 &\leq     
		 \biggl\| e^{h\Delta}  X^{(h)}_{t_{n-1}}
		  -
 \int_{t_{n-1}}^{t_n} e^{(t_n-s) \Delta} {\mathcal V}\bigl(s, \cdot, {\rm Leb}_{\mathbb S} \circ (X_{t_{n-1}}^{(h)})^{-1} \bigr) \ud s
\\
&\hspace{15pt}		  +\int_{t_{n-1}}^{t_n} e^{(t_n-s)\Delta}\ud W_s -e^{(n-m)h\Delta}  X^{(h)}_{t_m}   \biggr\|_{2}^{2p},
	\end{split} 
\end{equation*}			
which prompts us to let
\begin{equation*}
\begin{split}
\hat{X}_s^{(h),n-1} &:= 
e^{s \Delta} 
\bigl[ X_{t_{n-1}}^{(h)} - e^{(t_{n-1} - t_m) \Delta} X_{t_m}^{(h)} \Bigr] 
\\
& \hspace{-10pt} 
- \int_0^{s} e^{(s-r) \Delta} {\mathcal V}\bigl( r + t_{n-1}, \cdot,  {\rm Leb}_{\mathbb S} \circ (X_{ t_{n-1}}^{(h)})^{-1} \bigr)
\ud r 
+ \int_0^s e^{(s-r) \Delta} \ud \bigl[ W_{t_{n-1} + r} - W_{t_{n-1}} \bigr]. 
\end{split} 
\end{equation*} 
By 
Theorem 6 in \cite{salavatiZangeneh2016pthPowerMaxIneqStochConv}, we obtain, for $t_n \geq t_m \geq t_N$ for some integer $N \geq 1$, 
\begin{equation*}
\begin{split}
&{\mathbb E} \biggl[ 
\Bigl\| X_{t_n}^{(h)} 
-
		 e^{(t_n-t_m)\Delta}   
		 X^{(h)}_{t_m}   
		 \Bigr\|_2^{2p} \, \vert \, {\mathcal F}_{t_N} \biggr] 
		 \\
&\leq
{\mathbb E} \biggl[ 
\Bigl\| X_{t_{n-1}}^{(h)} 
-
		 e^{(t_{n-1}-t_m) \Delta}   
		 X^{(h)}_{t_m}   
		 \Bigr\|_2^{2p} \, \vert \, {\mathcal F}_{t_N} \biggr]  
		 \\
		 & \hspace{15pt} 
		  + C h 
		 \biggl( 
		 		 {\mathbb E} \biggl[ 
\Bigl\| X_{t_{n-1}}^{(h)} 
-
		 e^{(t_{n-1}-t_m) \Delta}   
		 X^{(h)}_{t_m}   
		 \Bigr\|_2^{2p-1} \, \vert \, {\mathcal F}_{t_N} \biggr]  
		 \\
&\hspace{30pt} 		 +
		 {\mathbb E} \biggl[ 
\Bigl\| X_{t_{n-1}}^{(h)} 
-
		 e^{(t_{n-1}-t_m) \Delta}   
		 X^{(h)}_{t_m}   
		 \Bigr\|_2^{2(p-1)} \, \vert \, {\mathcal F}_{t_N} \biggr]  
		 + h^{p-1} 
		 \biggr) 
		 \\
		 &\leq (1+h)  {\mathbb E} \biggl[ 
\Bigl\| X_{t_{n-1}}^{(h)} 
-
		 e^{(t_{n-1}-t_m) \Delta}   
		 X^{(h)}_{t_m}   
		 \Bigr\|_2^{2p} \, \vert \, {\mathcal F}_{t_N} \biggr]  
		 \\
		 & \hspace{15pt} 
		  + C h 
		 \biggl( 		 
		 {\mathbb E} \biggl[ 
\Bigl\| X_{t_{n-1}}^{(h)} 
-
		 e^{(t_{n-1}-t_m) \Delta}   
		 X^{(h)}_{t_m}   
		 \Bigr\|_2^{2(p-1)} \, \vert \, {\mathcal F}_{t_N} \biggr]  
		 + h^{p-1} 
		 \biggr), 
\end{split}
\end{equation*} 
the second line following from H\"older and Young inequalities. 
Up to the pre-factor $(1+h)$ on the penultimate line, we recover the main inequality coming after (3.43) in 
\cite{delarueHammersley2022rshe}. This makes it possible to reproduce the computations therein and eventually to obtain the analogue of 
\cite[(3.44)]{delarueHammersley2022rshe}.  
We then get
\cite[(3.45)]{delarueHammersley2022rshe}.

Lastly, we need to readapt the third step in the proof of \cite[Proposition 3.7]{delarueHammersley2022rshe}. 
By returning to the proof of Lemma \ref{lem:1:1},
see in particular 
\eqref{eq:tightness:1}, 
 we obtain 
\begin{equation*}
\begin{split}
&\bigl\| \nabla \bigl( e^{(n-m) h U \Delta} X_{t_m} \bigr) \bigr\|_2^2 
\\
&\leq (1+h) 
 \biggl\|
\nabla \biggl( e^{[ (n-m) h U + h] \Delta } X_{t_{m-1}} + \int_{t_{m-1}}^{t_m} e^{[ (n-m) h U + t_m-s] \Delta}  \ud W_s 
\biggr)
\biggr\|_2^2
\\
&\hspace{15pt} + 
\bigl( 1+ \frac1{h} \bigr)  
  \biggl\|
\nabla \biggl(  \int_{t_{n-1}}^{t_n} e^{[ (n-m) h U + t_m-s] \Delta}  \Bigl[ {\mathcal V}(s,\cdot,
\text{\rm Leb}_{\mathbb S} \circ (X_{t_{n-1}}^{(h)})^{-1})    \Bigr]
\ud s \biggr)
\biggr\|_2^2 
\\
&\leq (1+h) 
 \biggl\|
\nabla \biggl( e^{[ (n-m) h U + h] \Delta } X_{t_{m-1}} + \int_{t_{m-1}}^{t_m} e^{[ (n-m) h U + t_m-s] \Delta}  \ud W_s 
\biggr)
\biggr\|_2^2
\\
&\hspace{15pt} + 
\bigl( 1+ h  \bigr)  
  \int_{t_{m-1}}^{t_m}
   \biggl\|
\nabla
 \biggl( e^{[ (n-m) h U + t_m-s ] \Delta}  \Bigl[ {\mathcal V}(s,\cdot,
\text{\rm Leb}_{\mathbb S} \circ (X_{t_{m-1}}^{(h)})^{-1})    \Bigr]
  \biggr)
\biggr\|_2^2 \ud s,  
\end{split}
\end{equation*} 
and then, by \eqref{eq:tightness:2}
and
\eqref{eq:tightness:3},  
\begin{equation*}
\begin{split}
&\bigl\| \nabla \bigl( e^{(n-m) h U \Delta} X_{t_m} \bigr) \bigr\|_2^2 
\\
&\leq (1+h) 
 \biggl\|
\nabla \biggl( e^{[ (n-m) h U + h] \Delta } X_{t_{m-1}} + \int_{t_{m-1}}^{t_m} e^{[ (n-m) h U + t_m-s] \Delta}  \ud W_s 
\biggr)
\biggr\|_2^2
\\
&\hspace{10pt}  + (1+h) 
 \int_{t_{m-1}}^{t_m}
 \frac{C}{\sqrt{t_m - s + (n-m) h U}} \ud s
 \\
&\leq (1+h) 
 \Bigl\|
\nabla \Bigl( e^{ [n-(m-1)] h U  \Delta } X_{t_{m-1}}
\Bigr)
\Bigr\|_2^2
+
(1+h) 
 \biggl\|
\nabla \biggl(  \int_{t_{m-1}}^{t_m} e^{[ (n-m) h U + t_m-s] \Delta}  \ud W_s 
\biggr)
\biggr\|_2^2
\\
&\hspace{10pt} +
2 (1+h) 
 \biggl\langle
\nabla \biggl( e^{[ (n-m) h U + h] \Delta } X_{t_{m-1}} \biggr), \biggl( \int_{t_{m-1}}^{t_m} e^{[ (n-m) h U + t_m-s] \Delta}  \ud W_s 
\biggr)
\biggr\rangle_2
\\
&\hspace{10pt} +  C  (1+h)  \frac{\sqrt{h}}{\sqrt{U}} \bigl[ \sqrt{1+n-m} - \sqrt{n-m} \bigr].
\end{split}
\end{equation*} 
And then, by induction, 
\begin{equation*}
\begin{split}
&\bigl\| \nabla \bigl( e^{(n-m) h U \Delta} X_{t_m} \bigr) \bigr\|_2^2 
\\
&\leq (1+h)^\ell
 \Bigl\|
\nabla \Bigl( e^{ [n-(m-\ell)] h U  \Delta } X_{t_{m-\ell}}
\Bigr)
\Bigr\|_2^2
\\
&\hspace{10pt} +
\sum_{k=m-\ell+1}^m
(1+h)^{m+1-k} 
 \biggl\|
\nabla \biggl(  \int_{t_{k-1}}^{t_k} e^{[ (n-k) h U + t_k-s] \Delta}  \ud W_s 
\biggr)
\biggr\|_2^2
\\
&\hspace{10pt} +
2 
\sum_{k=m-\ell+1}^m
(1+h)^{m+1-k} 
 \biggl\langle
\nabla \biggl( e^{[ (n-k) h U + h] \Delta } X_{t_{k-1}} \biggr), \biggl( \int_{t_{k-1}}^{t_k} e^{[ (n-k) h U + t_k-s] \Delta}  \ud W_s 
\biggr)
\biggr\rangle_2
\\
&\hspace{10pt} +  C 
\sum_{k=m-\ell+1}^m
(1+h)^{m+1-k} 
 \frac{\sqrt{h}}{\sqrt{U}} \bigl[ \sqrt{1+n-k} - \sqrt{n-k} \bigr].
\end{split}
\end{equation*}
Choosing $n=m$ and {$m-\ell=N$}, for some integer $N \geq 1$ (with $N \leq \lfloor T/h \rfloor$), performing the change of variable $j=m+1-k$, integrating with respect to $U$, taking the power $p \geq 1$ and then taking the conditional expectation given 
${\mathcal F}_{t_N}$, the second and third terms right above lead to similar quantities as $R^1$ and $R^2$ in the proof 
of  \cite[Proposition 3.7]{delarueHammersley2022rshe}. It then remains to observe that the last term is bounded by 
\begin{equation*} 
\begin{split}
&\biggl( 
{\mathbb E} \sum_{k=N+1}^m
(1+h)^{m+1-k} 
 \frac{\sqrt{h}}{\sqrt{U}} \bigl[ \sqrt{1+m-k} - \sqrt{m-k} \bigr]
\biggr)^p 
\\
&= \biggl( 
{\mathbb E} \sum_{j=1}^{m-N}
(1+h)^{j} 
 \frac{\sqrt{h}}{\sqrt{U}} \bigl[ \sqrt{j} - \sqrt{j-1} \bigr]
\biggr)^p
\\
&= c_p h^{p/2} 
\biggl( 
(1+h)^{m-N }
\sqrt{m-N} 
+
 \sum_{j=1}^{m-N-1}
\sqrt{j}  
 \bigl[  (1+h)^{j} - (1+h)^{j+1} \bigr]
\biggr)^p
\\
&= c_p h^{p/2} 
\biggl( 
(1+h)^{m-N }
\sqrt{m-N} 
-
h \sum_{j=1}^{m-N-1}
\sqrt{j}  
   (1+h)^{j}  
\biggr)^p
\\
&\leq C_{p,T}  \bigl[ h(m-N) \bigr]^{p/2},
\end{split} 
\end{equation*} 
which bound does not change the argument developed in 
the proof of \cite[Proposition 3.7]{delarueHammersley2022rshe}.
\end{proof}

\subsection{Existence of a solution and proof of \eqref{eq:main:existence:uniqueness}}
\label{subse:3:4} 
 
Following 
\cite{delarueHammersley2022rshe}, existence is proven by extracting a convergent subsequence of the family $((\tilde X_t^{(T/p)})_{0 \le t \le T})_{p \in {\mathbb N} \setminus \{0\}}$.
Very briefly, 
we decompose $X_{t_n}$ into 
$X_{t_n}:=
Y_{t_n}
+
V_{t_n}$,
where 
\begin{equation*} 
V_{t_{n+1}}
= e^{\Delta h} 
V_{t_n} 
- \int_0^{h} 
e^{\Delta (h-s)}
{\mathcal V}\Bigl(t_n+s,\cdot,
\text{\rm Leb}_{\mathbb S} \circ (X_{t_n})^{-1}
\Bigr) \ud s 
+
\int_0^{h} 
e^{\Delta (h-s)} \ud W_{t_n +s}. 
\end{equation*} 
Then, we observe that 
\begin{equation*} 
\begin{split} 
X_{t_{n+1}} &= \biggl( e^{\Delta h} 
X_{t_n} 
- \int_0^{h} 
e^{\Delta (h-s)}
{\mathcal V}\Bigl(t_n+s,\cdot,
\text{\rm Leb}_{\mathbb S} \circ (X_{t_n})^{-1}
\Bigr) \ud s 
+
\int_0^{h} 
e^{\Delta (h-s)} \ud W_{t_n +s} \biggr)^\star
\\
&= 
\biggl( e^{\Delta h} 
V_{t_n} 
- \int_0^{h} 
e^{\Delta (h-s)}
{\mathcal V}\Bigl(t_n+s,\cdot,
\text{\rm Leb}_{\mathbb S} \circ (X_{t_n})^{-1}
\Bigr) \ud s 
\\
&\hspace{15pt} +
\int_0^{h} 
e^{\Delta (h-s)} \ud W_{t_n +s}
+ e^{h \Delta} \bigl( X_{t_n} - V_{t_n} \bigr) 
 \biggr)^\star
 \\
 &= 
 \Bigl( V_{t_{n+1}}  + e^{h \Delta} Y_{t_n} 
  \Bigr)^\star,
\end{split} 
\end{equation*} 
which leads to 
\begin{equation*} 
Y_{t_{n+1}} =  \Bigl( V_{t_{n+1}}  + e^{h \Delta} Y_{t_n} 
  \Bigr)^\star - V_{t_{n+1}}, 
\end{equation*} 
which is the analogue of (4.6) in
\cite{delarueHammersley2022rshe}. 
We then proceed as in the proof of Proposition 4.1 in 
\cite{delarueHammersley2022rshe}: roughly speaking, 
the process $(Y_{t_n})_{0 \leq n \leq \lfloor T/h \rfloor}$ induces the reflection term and 
 the process 
 $(V_{t_n})_{0 \leq n \leq \lfloor T/h \rfloor}$ generates the non-reflected part of the dynamics.

To prove the first inequality in \eqref{eq:main:existence:uniqueness}, we notice that, by Lemma 
\ref{lem:1:1},
\begin{equation*} 
{\mathbb E} 
\Bigl[ \bigl\|\nabla \bigl( e^{\varepsilon \Delta}  X_{t_n}^{(h)} \bigr) \bigr\|_2^2 \Bigr]
\leq C_T \Bigl[ 1+ \min \Bigl( \frac{1}{t_n} \| X_0 \|_2^2 , \bigl\| \nabla X_0 \bigr\|_2^2 \Bigr) \Bigr]. 
\end{equation*} 
Letting $h$ tend to $0$, we get 
\begin{equation*} 
{\mathbb E} 
\Bigl[ \bigl\|\nabla \bigl( e^{\varepsilon \Delta}  X_{t}  \bigr) \bigr\|_2^2 \Bigr]
\leq C_T \Bigl[ 1 +  \min \Bigl( \frac{1}{t} \| X_0 \|_2^2 , \bigl\| \nabla X_0 \bigr\|_2^2 \Bigr) \Bigr].
\end{equation*} 
Letting $\varepsilon$ to $0$, we get the result. 

As for the second inequality in \eqref{eq:main:existence:uniqueness}, 
its proof is similar to the one in 
\cite{delarueHammersley2022ergodicrshe}. We give it for the sake of completeness. 
It can be checked that the mean (over ${\mathbb S}$) satisfies
\begin{equation*} 
\ud \int_{\mathbb S} X_t(x) \ud x = 
- \int_{\mathbb S}{\mathcal V}(t,x,{\rm Leb}_{\mathbb S} \circ X_t^{-1})  \ud x \, \ud t
+ \ud B_t^0.
\end{equation*} 
Since ${\mathcal V}$ is bounded, the result
is straightforward for the sole mean, which we denote by $(\overline X_t:=\int_{\mathbb S} X_t(x) \ud x)_{0 \le t \le T}$. 
Next, we expand 
\begin{equation*}
\begin{split} 
&\ud \| X_t - \bar X_t \|_2^2 
\\
&= - 2 \langle X_t -\bar X_t,{\mathcal V}(t,\cdot,{\rm Leb}_{\mathbb S} \circ X_t^{-1} ) \rangle_2 \ud t 
- \| \nabla X_t \|_2^2 \ud t + 2 \langle X_t - \bar X_t, \ud W_t \rangle_2 + c_\lambda \ud t.
\end{split}
\end{equation*} 
%
%
%
%
Using Poincaré inequality, one can prove that, for any $\varepsilon \in (0,1)$, 
\begin{equation*} 
\begin{split}
\ud \bigl[ \exp \bigl( \varepsilon \| X_t - \bar X_t \|_2^2 \bigr) \bigr] 
&\leq 
C_\varepsilon 
\exp \bigl( \varepsilon \| X_t - \bar X_t \|_2^2 \bigr)
\bigl( 1+ \varepsilon^2 \| X_t  - \bar X_t\|_2^2 - c \varepsilon \|X_t - \bar X_t \|^2_2 \bigr)   \ud t  
\\
&\hspace{15pt} + 
2 \varepsilon
\exp \bigl( \varepsilon \| X_t - \bar X_t \|_2^2 \bigr)
 \langle X_t - \bar X_t, \ud W_t \rangle_2,
\end{split} 
\end{equation*} 
where $c$ denotes the constant in Poincaré inequality over 
${\mathbb S}$. Above, $C_\varepsilon$ may depend on $\varepsilon$.  
Choosing $\varepsilon$ small enough, we deduce that 
\begin{equation*} 
\sup_{0 \le t \le T} {\mathbb E} \bigl[ \exp \bigl( \varepsilon  \| X_t - \bar X_t\|_2^2 \bigr) 
\bigr]
 < \infty,
\end{equation*} 
for some $\varepsilon >0$. 
By Burkholder-Davis-Gundy inequality, 
we complete the proof (with a possibly smaller value of $\varepsilon$).

\bibliographystyle{alpha}
 
\bibliography{FullBibliographyWRPH}




\begin{acks}
Youssef Ouknine thanks UniCA for having hosted him during the preparation of this work. His stay was supported by Centre National de la Recherche Scientifique (CNRS, Call `postes rouges').
François Delarue acknowledges the financial support of the European
Research Council (ERC) under the European Union’s Horizon 2020 research and innovation
programme (AdG ELISA project, Grant agreement No. 101054746).
\end{acks}


\end{document}